\newtheorem{theorem}{Theorem}[section]
\newtheorem{corollary}[theorem]{Corollary}
\newtheorem{conjecture}[theorem]{Conjecture}
\newtheorem{proposition}[theorem]{Proposition}
\newtheorem{lemma}[theorem]{Lemma}
\newenvironment{customthm}[1]
  {\innercustomthm}
  {\endinnercustomthm}
\theoremstyle{definition}
\newtheorem{definition}{Definition}[section]
\theoremstyle{definition}
\newtheorem{remark}[theorem]{Remark}
\newtheorem{observation}[theorem]{Observation}
\newcommand\ZZ{\mathbb{Z}}
\newcommand\RR{\mathbb{R}}
\newcommand\del{\partial}
\newcommand\tb{\text{tb}}
\newcommand\tw{\text{tw}}
\newcommand\rot{\text{rot}}
\newcommand\Rot{\text{Rot}}
\newcommand\Gstar{G^*}
\newcommand\Tstar{T^*}
\newcommand\tbbf{\overline{\tb}}
\newcommand\twbf{\overline{\tw}}
\newcommand\rotbf{\overline{\rot}}
\begin{document}

\title[{Planar Legendrian $\Theta$-graphs}]{Planar Legendrian $\Theta$-graphs}

\author[P. Lambert-Cole]{Peter Lambert-Cole}
\address{Department of Mathematics \\ Indiana University}
\email{pblamber@indiana.edu}
\urladdr{\href{https://pages.iu.edu/}{https://pages.iu.edu/~pblamber}}

\author[D. O'Donnol]{Danielle O'Donnol$^{\dag}$ }
\address{Department of Mathematics \\ Indiana University}
\email{odonnol@indiana.edu}
\urladdr{\href{http://pages.iu.edu/~odonnol/}{http://pages.iu.edu/~odonnol/}}

\thanks{$^{\dag}$ This work was partially supported by the National Science Foundation grant DMS-160036.}

% \date{\today}
\keywords{Contact Topology, Legendrian graphs}
\subjclass[2010]{53D10; 57M15; 05C10}

%%%%%%%%%%%%%%%%%%%%%%%%%%%%%%%%%%%%%%%%%%%%%%%%%%%%%%%
\begin{abstract}
%%%%%%%%%%%%%%%%%%%%%%%%%%%%%%%%%%%%%%%%%%%%%%%%%%%%%%%
We classify topologically trivial Legendrian $\Theta$-graphs and identify the complete family of nondestabilizeable Legendrian realizations in this topological class.  In contrast to all known results for Legendrian knots, this is an infinite family of graphs.  
We also show that any planar graph that contains a subdivision of a $\Theta$-graph or $S^1 \vee S^1$ as a subgraph will have an infinite number of distinct, topologically trivial nondestabilizeable Legendrian embeddings.  
Additionally, we introduce two moves, vertex stabilization and vertex twist, that change the Legendrian type within a smooth isotopy class.

%%%%%%%%%%%%%%%%%%%%%%%%%%%%%%%%%%%%%%%%%%%%%%%%%%%%%%%	
\end{abstract}
%%%%%%%%%%%%%%%%%%%%%%%%%%%%%%%%%%%%%%%%%%%%%%%%%%%%%%%

\maketitle

%%%%%%%%%%%%%%%%%%%%%%%%%%%%%%%%%%%%%%%%%%%%%%%%%%%%%%%
\section{Introduction} % (fold)
\label{sec:introduction}
%%%%%%%%%%%%%%%%%%%%%%%%%%%%%%%%%%%%%%%%%%%%%%%%%%%%%%%

A {\it Legendrian graph} in a contact manifold $(M,\xi)$ is an embedding $g: G \rightarrow (M,\xi)$ of an abstract graph $G$ such that the image of each edge is tangent to the contact structure. In contrast with the study of Legendrian knots and links, relatively little is known about Legendrian graphs.  Only a few spatial graph types have been classified and there are several realization and non-realization results for Legendrian graphs in $(S^3,\xi_{std})$ or equivalently $(\RR^3,\xi_{std})$.

Eliashberg and Fraser proved that all Legendrian embeddings of an abstract tree are Legendrian isotopic \cite{Eliashberg_Fraser}.  The second author and Pavelescu classified topologically trivial Legendrian embeddings of the Lollipop and Handcuff graphs \cite{OP-1}.  A graph $G$ has a Legendrian embedding with all cycles maximal unknots if and only if it does not contain $K_4$ as a minor \cite{OP-1}.  However, $K_4$ does have a Legendrian embedding where each cycle has maximal Thurston-Bennequin number in its smooth isotopy class \cite{Tanaka}.  It is unknown whether every $G$ has such a Legendrian embedding.  The second author and Pavelescu solved the geography problem for Legendrian embeddings of the $\Theta$-graph where each cycle is unknotted \cite{OP-Theta}.  They gave necessary and sufficient conditions for fixed vectors to be realized as Thurston-Bennequin and rotation invariants of such Legendrian $\Theta$-graphs. In \cite{OP-3} they found further restrictions on the Thurston-Bennequin and rotation invariants of unknotted embeddings of complete and complete bipartite graphs.

In this paper, we completely classify topologically trivial Legendrian embeddings of the $\Theta$-graph in $(S^3,\xi_{std})$. Our key technical tool is the main result in \cite{LO-Planar} that the Legendrian ribbon $R_g$ and rotation invariant $\rot_g$ are a complete pair of invariants for topologically trivial Legendrian graphs.

Before stating the classification, we first describe some interesting consequences that distinguish Legendrian graphs from Legendrian knots.  For a fixed isotopy class of embeddings, the set of Legendrian realizations up to Legendrian isotopy is ``generated" by a set of nondestabilizeable Legendrian realizations.  Specifically, there exists a set $\mathcal{N}$ of Legendrian embeddings such that 
\begin{enumerate}
\item every Legendrian embedding in the isotopy class is obtained by a finite sequence of stabilizations from some element of $\mathcal{N}$, and 
\item no embedding in $\mathcal{N}$ admits a destabilization.
\end{enumerate}
See Section \ref{sec:background} for a precise description of stabilization.  For all smooth knot types for which the set $\mathcal{N}$ is known, it is finite.  However, this is not true for topologically trivial Legendrian $\Theta$-graphs.  There exists an infinite family $\{G_l\}$ of nondestabilizeable realizations.  The front projection of $G_l$ is given as follows.  Fix two points in the $xz$-plane and connect them by 3 arcs with no cusps.  Now, take $l \in \frac{1}{2}\ZZ_{\geq -\frac{1}{2}}$, collect the bottom 2 strands and add $l + \frac{1}{2}$ twists as in Figure \ref{fig:G-l}.  Call this graph $G_l$.

%%%%%%%%%%%%%%%%%%%%%%%%%%%%%%%
\begin{figure}[htpb!]
\centering
\labellist
	\small\hair 2pt
	\pinlabel $G_{-\frac{1}{2}}$ at 93 3
	\pinlabel $G_0$ at 290 3
	\pinlabel $G_l$ at 495 3
	\small\hair 4pt
	\pinlabel $l+\frac{1}{2}$ at 495 50

\endlabellist
\includegraphics[width=.8\textwidth]{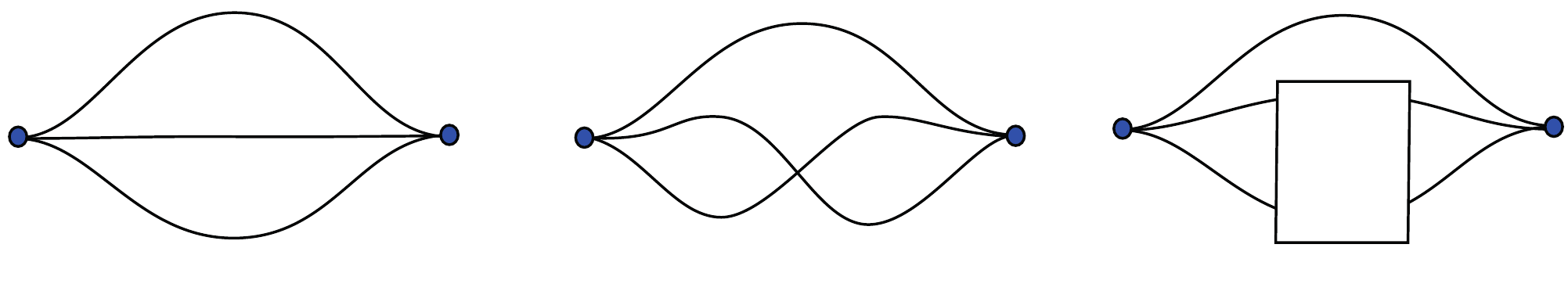}
\caption{On the left is a front projection of $G_{-\frac{1}{2}} $of the $\Theta-$graph.  In the center a $G_0$ realization of the $\Theta-$graph. 
On the right is a front projection of $G_l$ of the $\Theta-$graph.}
\label{fig:G_l}
\end{figure}
%%%%%%%%%%%%%%%%%%%%%%%%%%%%%%%%

\begin{theorem}
\label{thrm:theta-nondestab}
The infinite family $\{G_l\}$, where $l \in \frac{1}{2} \ZZ$ and $l \geq -\frac{1}{2}$, are nondestabilizeable and pairwise non-Legendrian isotopic.
\end{theorem}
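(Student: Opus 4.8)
The plan is to use the key technical tool cited in the introduction: the theorem from \cite{LO-Planar} that the Legendrian ribbon $R_g$ together with the rotation invariant $\rot_g$ form a complete pair of invariants for topologically trivial Legendrian graphs. This immediately splits the claim into two independent tasks: (1) showing each $G_l$ is nondestabilizeable, and (2) showing the $G_l$ are pairwise non-isotopic. For the second task, I would compute the classical invariants — the Thurston-Bennequin numbers of the three cycles and the rotation numbers — directly from the front projections in Figure \ref{fig:G_l} using the standard cusp-and-crossing formulas, and exhibit that these numerical data (or the ribbon together with $\rot_g$) differ across the family. Since adding $l+\frac{1}{2}$ twists to the bottom two strands changes the self-linking/framing data of the cycles passing through those strands, I expect the invariants to depend monotonically on $l$, which would separate the whole family at once.

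**The main technical content** lies in task (1), nondestabilizeability. My first move would be to make precise, from the Section \ref{sec:background} definition of stabilization, what a destabilization of a Legendrian graph looks like at the level of the front projection and, more usefully, at the level of the ribbon $R_{g}$ and $\rot_g$: a destabilization should decrease the complexity of the ribbon (removing a stabilization kink) in a controlled way. I would then argue that $G_l$ admits no such simplification. Concretely, I would try to show that the front projection of $G_l$ already has the minimal number of cusps compatible with its ribbon, so that no cusp can be removed by a Legendrian isotopy followed by a destabilization. The twist region contributes only crossings, not cusps, and the three connecting arcs are drawn cusp-free, so the only cusps are the two turn-back cusps forced at the two vertices' neighborhoods; I would argue these are essential.

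**The hard part will be** ruling out destabilizations that are not visible in the given front projection — that is, showing no Legendrian isotopy can first create a configuration from which a destabilization is possible. Here I would lean on the completeness result: rather than reasoning about all possible isotopies directly, I would pass to the ribbon $R_{G_l}$ and argue intrinsically that it is not the stabilization of any smaller ribbon carrying a compatible $\rot$ value. This reduces a topological/isotopy question to a combinatorial one about the ribbon surface (an abstract surface with the graph as a spine), where stabilization corresponds to an explicit local modification; nondestabilizeability then becomes the statement that $R_{G_l}$ contains no such removable piece. I would set up the ribbon of $G_l$ explicitly, identify all candidate destabilizing disks, and check each is obstructed, most likely by an Euler-characteristic or framing count that the twisting makes impossible to reduce.

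**Finally**, for the pairwise-distinctness conclusion I would double-check that no two $G_l$ happen to share all invariants by a coincidence of the twist parameter; since $l$ ranges over $\frac{1}{2}\ZZ_{\geq -1/2}$ in half-integer steps and each half-twist changes a crossing-based invariant by a fixed increment, I expect a clean injectivity statement, completing the proof.
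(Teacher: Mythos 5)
Your plan for pairwise distinctness is sound and matches the paper: computing from the front projection gives $\tbbf_{G_l} = (-1,-2-2l,-1)$ (this is Lemma~\ref{lemma:Gl-basics}(1)), and since the entry $-2-2l$ is strictly monotone in $l$, the multiset of Thurston--Bennequin numbers of the three cycles already separates the whole family; neither the ribbon nor the completeness theorem of \cite{LO-Planar} is actually needed for this half.

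The genuine gap is in nondestabilizeability, where both of your proposed routes miss the obstruction that makes the proof work. Counting cusps in the given front cannot be made into an argument: the number of cusps is not a Legendrian isotopy invariant (a Reidemeister~I move changes it), so ``this front has the minimal number of cusps'' is not something you can certify by inspecting one diagram --- this is exactly the difficulty you flag, and your fallback does not resolve it. The fallback --- showing $R_{G_l}$ is not a stabilization of a smaller ribbon ``by an Euler-characteristic or framing count'' --- names obstructions that do not obstruct: stabilization does not change the topology of the ribbon at all (it only inserts a half-twist, i.e.\ changes the framing along one edge), and a bare framing count cannot rule out destabilization unless you know a priori that the existing framing is already extremal. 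That extremality is the missing idea, and it is elementary: each cycle of $G_l$ is an unknot, so the Bennequin inequality gives $\tb \leq -1$; and if an edge $e$ admitted a destabilization, then every cycle containing $e$ would destabilize, raising its $\tb$ by $1$ (this is Proposition~\ref{prop:nondestab-criterion} in the paper). Since every edge of $G_l$ lies in a cycle with $\tb = -1$ --- the top edge paired with either bottom edge gives such a cycle --- no edge can be destabilized. Note that this uses only the $\tb$ computation you already performed for distinctness: once you have $\tbbf_{G_l} = (-1,-2-2l,-1)$, nondestabilizeability is a one-line consequence, whereas the ribbon-intrinsic program you sketch would still need this same Bennequin-type input to get off the ground.
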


This result is a sharp contrast with what is known about Legendrian knots.  Moreover, it is always true if $G$ is abstractly planar and contains a subdivision of the $\Theta$-graph as a subgraph.

\begin{theorem}
\label{thrm:theta-infinite}
Let $G$ be an abstract planar graph that contains a subdivision of the $\Theta$-graph or $S^1 \vee S^1$ as a subgraph.  Then there exists infinitely many, pairwise-distinct, topologically trivial Legendrian embeddings $g: G \rightarrow (S^3,\xi_{std})$.
\end{theorem}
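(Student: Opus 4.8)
The plan is to deduce Theorem~\ref{thrm:theta-infinite} from Theorem~\ref{thrm:theta-nondestab} by a restriction argument, so that the only genuinely new input is the infinite family $\{G_l\}$ already constructed for the $\Theta$-graph. Let $H\subset G$ be a subgraph that is a subdivision of $\Theta$; the case of $S^1\vee S^1$ is entirely analogous and is treated at the end. The observation that drives the proof is that Legendrian isotopy is inherited by subgraphs: if $g,g'\colon G\to(S^3,\xi_{std})$ are Legendrian isotopic through a family $g_t$, then $g_t|_H$ is a Legendrian isotopy of $H$, so $g|_H\simeq g'|_H$. Consequently, to produce infinitely many pairwise non-isotopic topologically trivial Legendrian embeddings of $G$ it suffices to produce a family $\{g_l\}$ of such embeddings whose restrictions to $H$ are pairwise non-Legendrian-isotopic.

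First I would fix a planar, hence topologically trivial, embedding of $G$ and realize it as a Legendrian graph $g_0$, using the standard fact that every topological spatial graph is isotopic to a Legendrian one. I would arrange this embedding so that, after forgetting the degree-$2$ vertices of the subdivision (chosen to lie at smooth points of the arcs), $g_0|_H$ is the front $G_{-\frac{1}{2}}$ of Figure~\ref{fig:G_l}, and so that the bottom two of the three strands of $H$ run parallel inside a disk $D$ of the front that is disjoint from the rest of $g_0(G)$. Inserting $l+\tfrac{1}{2}$ twists into these two strands inside $D$, exactly as in the definition of $G_l$, yields a Legendrian embedding $g_l$. Since the twisting is supported in $D$ and $D$ meets no other part of $G$, this modification does not change the topological type; thus each $g_l$ is topologically trivial, and by construction $g_l|_H$ is a subdivision of $G_l$.

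It remains to see that the $g_l$ are pairwise distinct. Any Legendrian isotopy $g_l\simeq g_{l'}$ would restrict to a Legendrian isotopy $g_l|_H\simeq g_{l'}|_H$ of the subdivided graphs. A degree-$2$ vertex sitting at a smooth point of a Legendrian arc can be introduced or deleted through Legendrian embeddings, and this operation commutes with isotopy; smoothing all the subdivision vertices therefore turns the isotopy above into a Legendrian isotopy $G_l\simeq G_{l'}$, contradicting Theorem~\ref{thrm:theta-nondestab} when $l\neq l'$. Hence $\{g_l\}$ is the desired infinite family.

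For the $S^1\vee S^1$ case the two loops of $H$ play the role of the two twisted strands, and the same localized twist insertion produces an infinite family of topologically trivial Legendrian wedges; these are distinguished by the same mechanism, e.g. by the framing recorded in the Legendrian ribbon $R_{g_l}$. The hard part will be the geometric bookkeeping in the construction, namely arranging the planar Legendrian embedding of $G$ so that $H$ appears as a standard $\Theta$-front with its bottom two strands isolated in a disk disjoint from the remainder of $G$, and checking that the localized twisting keeps the \emph{entire} embedding topologically trivial rather than merely keeping $H$ trivial. Once this is in place, the restriction principle supplies the rest of the argument with no further work.
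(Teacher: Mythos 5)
Your construction hinges on the claim that inserting the $l+\frac{1}{2}$ twists into the two bottom strands of $H$ ``does not change the topological type'' because the twisting is supported in a disk $D$ disjoint from the rest of $g_0(G)$. Locality is not a valid justification, and the claim is false in general: a full twist of two parallel strands inside a ball is not realized by any homeomorphism of $S^3$ that is the identity outside that ball (it turns a two-component unlink into a Hopf link). For the $\Theta$-graph by itself the twisted embeddings $G_l$ are indeed trivial (Lemma \ref{lemma:Gl-basics}), but the untwisting isotopy is not supported near the twist region --- it must sweep a whole vertex of the graph past it. Inside $G$ that isotopy drags along whatever edges of $G$ are attached to the paths $p_2,p_3$ between $D$ and the two trivalent vertices, and when there are attachments on both sides of $D$ no untwisting isotopy need exist; one expects Kinoshita-type nonplanar embeddings. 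This is precisely why the paper never twists in the middle of the strands: it performs \emph{vertex twists} on two edges $e_2,e_3$ of the subdivided $\Theta$ that are adjacent in the cyclic order at $v_1$, a move that is smoothly trivial because it can be undone inside a cone neighborhood of the two edge germs. Even arranging such an adjacent pair is nontrivial: if, say, $G$ is $\Theta$ with an extra edge joining interior points of $e_2$ and $e_3$, then for that choice of paths every disk between the two bottom strands meets the extra edge, and one must re-choose $v_1,v_2,p_1,p_2,p_3$ so that $p_2,p_3$ lie on the boundary of a common face, as in the paper's proof. You explicitly defer all of this as ``the hard part,'' but it is the entire content of the step, and the justification you do offer in its place is wrong.

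Two further points. First, as literally stated the theorem has an almost trivial proof (repeatedly stabilize one edge of a cycle; the resulting embeddings are topologically trivial and pairwise distinguished by $\tb$); the real content, as the abstract and Section \ref{sec:minors} make clear, is that the infinitely many embeddings can be taken \emph{nondestabilizeable}. The paper secures this via Property N and the spanning-tree/Legendrian-realization construction (Propositions \ref{prop:LERP} and \ref{prop:realization-propN}), and checks that vertex twists preserve Property N; your $g_0$ is an arbitrary Legendrian realization, so your family carries no control over destabilizeability. Second, the $S^1\vee S^1$ case is not ``entirely analogous'': no cycle contains edges of both loops, so neither the restriction argument via Theorem \ref{thrm:theta-nondestab} (which has no stated analogue for wedges) nor a $\tb$ count can distinguish the twisted embeddings. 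The paper must switch invariants there, using the linking number of two components of the transverse pushoff of the Legendrian ribbon. For the $\Theta$ case, by contrast, your restriction principle is valid but more roundabout than needed --- the $\tb$ of the cycle $p_2\cup p_3$ already separates the family, and avoids both the smoothing-of-degree-two-vertices discussion and the need for $g_0|_H$ to be exactly the minimal front of Figure \ref{fig:G_l}.
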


Fuchs and Tabachnikov \cite{Fuchs-Tabachnikov} proved that if $L_1,L_2$ are Legendrian knots that are smoothly isotopic, then there exists a sequence of (de)stabilizations that connect $L_1$ and $L_2$.  However, the stabilization move (which we refer to here as an {\it edge stabilization}) is not sufficient to connect any two Legendrian realizations of the same spatial graph.  

\begin{theorem}
\label{thrm:edge-stab-equivalence}
Two embeddings $G_l$ and $G_{l'}$ are related by a sequence of edge stabilizations if and only if $l - l' \in \ZZ$.
\end{theorem}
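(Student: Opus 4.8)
The plan is to prove the two implications by exhibiting a $\ZZ/2$-valued invariant of the edge-stabilization relation that records the coset of $l$ in $\frac{1}{2}\ZZ/\ZZ$. I would begin with the elementary bookkeeping for a single edge stabilization: stabilizing an edge $e$ changes $\tb(\gamma)$ by $-1$ and $\rot(\gamma)$ by $\pm 1$ for every cycle $\gamma$ containing $e$, and fixes both for cycles disjoint from $e$. Hence for each cycle $\gamma$ the residue $\tb(\gamma)+\rot(\gamma) \pmod 2$ is unchanged by any edge stabilization, since $-1\pm 1\equiv 0$. This residue is therefore constant on each edge-stabilization class and is my candidate obstruction.

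For the forward implication I would evaluate this residue on the cycle $\gamma_b\subset G_l$ formed by the two bottom edges. Reading the front in Figure~\ref{fig:G_l}, the only $l$-dependent crossings of $\gamma_b$ occur in the twist region, which contributes $2l+1$ crossings to the writhe, while all cusp data and all crossings with the top edge are independent of $l$. Since $\tb(\gamma_b)+\rot(\gamma_b)$ agrees modulo $2$ with the writhe of $\gamma_b$ plus an $l$-independent cusp term, its parity equals a fixed constant plus $2l+1$; this flips precisely when $l$ changes by a half-integer. Thus the residue separates the two cosets of $\ZZ$ in $\frac{1}{2}\ZZ$, and combined with the first step it follows that if $G_l$ and $G_{l'}$ are edge-stabilization equivalent then $\tb(\gamma_b)+\rot(\gamma_b)$ must agree mod $2$, hence $l-l'\in\ZZ$.

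For the converse, given $l-l'\in\ZZ$ I would produce a common edge stabilization of $G_l$ and $G_{l'}$ and invoke the completeness theorem of \cite{LO-Planar}, by which two topologically trivial Legendrian graphs are Legendrian isotopic exactly when their ribbons $R_g$ and rotation invariants $\rot_g$ coincide. An edge stabilization adds a full twist to the corresponding band of $R_g$ and shifts $\rot_g$ on the adjacent cycles by $\pm 1$. I would stabilize both $G_l$ and $G_{l'}$---note that stabilizing only one side cannot work, since by Theorem~\ref{thrm:theta-nondestab} neither graph is a stabilization of the other---choosing the number and signs of the stabilizations on each edge so that the resulting graphs have equal $\tb$ and $\rot$ on all three cycles. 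When $l-l'\in\ZZ$ the two twist regions differ by an integral number of full twists, which is exactly the ribbon framing change these stabilizations realize, so after enough stabilizations the ribbons can be made to agree; completeness then yields a Legendrian isotopy, exhibiting the required sequence of edge stabilizations.

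I expect the converse to be the main obstacle. The delicate point is to verify that, after stabilizing both sides, the ribbons genuinely coincide: one must show that a full inter-band twist between the two bottom bands can be traded for band self-framings---introduced by the stabilizations---via a Legendrian isotopy, while respecting the cyclic edge orderings at the two vertices, and that this trade is obstructed only by the half-twist parity detected in the first two steps. Concretely this reduces to solving a small linear system for the stabilization counts and signs that equalizes $\tb$ and $\rot$ on the three cycles, and then checking the ribbon identification by hand; I anticipate that the parity of $2l+1$ reappears here as the sole obstruction, so the construction succeeds exactly on the integer cosets and the two directions match.
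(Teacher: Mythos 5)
Your forward direction has a fatal gap: the invariant you build it on is vacuous. For every Legendrian unknot in $(S^3,\xi_{std})$ --- in fact for every Legendrian knot --- one has $\tb(\gamma)+\rot(\gamma) \equiv 1 \pmod 2$; this is precisely the parity constraint listed among the admissibility conditions in Section~\ref{sec:classification}. So the residue $\tb(\gamma)+\rot(\gamma) \bmod 2$ equals $1$ for every cycle of every topologically trivial Legendrian $\Theta$-graph, and it cannot separate any $G_l$ from any $G_{l'}$. The error in your computation is the assertion that the rotation number of the bottom cycle $\gamma_b$ is independent of $l$: it is not. Since the other two cycles of $G_l$ have $\tb=-1$ and hence $\rot=0$, Lemma~\ref{lemma:Gl-basics}(2) gives $\rot(\gamma_b)=\Rot_{G_l} \equiv 2l+1 \pmod 2$, so the parity of $\rot(\gamma_b)$ flips together with the parity of $\tb(\gamma_b)=-2-2l$, and the two flips cancel in your sum. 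The paper's obstruction is genuinely different: edge stabilizations are supported in the interiors of edges, so they cannot change the cyclic ordering of the three edges in the contact plane at a vertex, whereas $G_l$ and $G_{l'}$ with $l-l'\notin\ZZ$ have different cyclic orderings at one vertex (they are related by an odd number of vertex stabilizations). If you want a numerical version of this, the correct quantity is the \emph{total} rotation number: each edge of $\Theta$ lies in exactly two cycles, which traverse it with opposite orientations, so an edge stabilization changes their rotation numbers by $+1$ and $-1$ and leaves $\Rot$ unchanged; since $\Rot_{G_l}\equiv 2l+1 \pmod 2$ (equivalently, by Lemma~\ref{lemma:total-rot}, $\Rot$ records the vertex signs), this separates the two cosets.

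Your converse is a workable idea but is left open at exactly the point you flag, and that point is real: matching $(\tbbf,\rotbf)$ after stabilizing both sides is not sufficient when $\Rot=0$ (the case of all half-integer $l$), because by Lemma~\ref{lemma:realization-rot0} there are two non-isotopic embeddings with those invariants, distinguished by the coorientation of the ribbon. The missing step can be closed with the paper's own tools: stabilizations happen away from the vertices, hence preserve the signs $\sigma(v_i)$; for $l-l'\in\ZZ$ the graphs $G_l,G_{l'}$ have equal vertex signs; and Lemma~\ref{lemma:tb-determines-contact} together with Theorem~\ref{thrm:complete} then upgrades equal classical invariants plus equal signs to a Legendrian isotopy. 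The paper avoids this machinery entirely with an explicit diagrammatic construction (Figure~\ref{fig:G_ltoG_l'}): two edge stabilizations on the bottom edges of $G_l$, followed by Reidemeister moves, yield $G_{l+1}$ with one stabilized edge, which destabilizes to $G_{l+1}$; iterating connects $G_l$ to $G_{l'}$ whenever $l-l'\in\ZZ$. Either route would finish the converse, but as written your proof establishes neither direction: the forward implication rests on an invariant that is constant, and must be replaced by the vertex-ordering (or total-rotation) argument.
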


We introduce two new operations, called {\it vertex stabilization} and {\it vertex twist}, that modify Legendrian graphs without affecting the smooth isotopy class.  We conjecture that, together with edge stabilizations, these form a complete set of operations relating any two Legendrian graphs in the same smooth isotopy class.

%\begin{conjecture}
%Let $G$ be a graph and $g: G \rightarrow S^3$ a smooth embedding.  Then any two Legendrian embeddings $g_1,g_2: G \rightarrow (S^3,\xi_{std})$ in the smooth isotopy class of $g$ are related, up to Legendrian isotopy, by a sequence of edge stabilizations, vertex stabilizations, and vertex twists.
%\end{conjecture}

\begin{conjecture}
Let $G$ be a graph and $g: G \rightarrow S^3$ a smooth embedding.  Then any two Legendrian realization $g_1$ and $g_2$ in $(S^3,\xi_{std})$ of $g$ are related, up to Legendrian isotopy, by a sequence of edge stabilizations, vertex stabilizations, and vertex twists.
\end{conjecture}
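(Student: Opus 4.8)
The plan is to reduce the problem to a comparison between the Reidemeister calculus for \emph{smooth} spatial graphs and a Reidemeister calculus for \emph{Legendrian} graphs, and then to show that each discrepancy between the two is accounted for by exactly one of the three operations. Since $g_1$ and $g_2$ are Legendrian realizations of the same smooth embedding $g$, they are connected by a smooth isotopy of embeddings of $G$. After a generic perturbation this isotopy is carried by a finite sequence of elementary moves on diagrams of $G$ (the graph analogues of the Reidemeister moves, together with moves that slide an edge past a vertex and permute the edges around a vertex). The goal is to upgrade this smooth sequence to a Legendrian sequence, at the cost of inserting edge stabilizations, vertex stabilizations, and vertex twists.

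First I would invoke a Legendrian Reidemeister theorem for graphs: two Legendrian graphs are Legendrian isotopic if and only if their front projections are related by a finite list of front moves --- the Legendrian Reidemeister moves on edges away from vertices, together with a finite list of local moves at each vertex. Using this, I would bring the fronts of $g_1$ and $g_2$ to have the same underlying \emph{smooth} diagram by running the smooth isotopy above; each elementary smooth move then differs from an honest Legendrian front move by a purely local modification supported either in the interior of an edge or in a neighborhood of a single vertex.

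The analysis then splits into two cases. Along an edge, away from the vertices, the discrepancy between a smooth Reidemeister move and a Legendrian front move is exactly the insertion or deletion of a cusp pair; this is the content of the Fuchs--Tabachnikov argument \cite{Fuchs-Tabachnikov} applied edge-by-edge, and is realized by edge stabilizations. At a vertex, the Legendrian germ is determined by the cyclic order in which the incident edges leave the vertex in the contact plane together with the rotation contribution recorded there; this is a local manifestation of the completeness of the ribbon--rotation pair $(R_g,\rot_g)$ from \cite{LO-Planar}. I would prove a local normal form: any two Legendrian germs at a vertex inducing the same smooth germ differ by a sequence of vertex stabilizations and vertex twists. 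The vertex twist changes the half-integer framing datum that edge stabilization provably cannot reach --- precisely the phenomenon separating the cosets $\ZZ$ and $\ZZ + \tfrac{1}{2}$ of $l$ in Theorem \ref{thrm:edge-stab-equivalence} --- while the vertex stabilization adjusts the rotation number localized at the vertex.

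The main obstacle is the passage from these local statements to a global one. The decomposition of a smooth isotopy is only determined up to moves that simultaneously affect several edges and vertices, so one must check that the three operations can be applied in a globally coherent way without creating new, uncontrolled crossings. Moreover, the completeness of $(R_g,\rot_g)$ is established in \cite{LO-Planar} only for topologically trivial graphs; for an arbitrary smooth type $g$ one needs a \emph{relative} version that fixes the complicated part of the embedding and controls only the local germs. I expect the crux of the proof to be establishing this relative, local normal form at vertices and then assembling the local adjustments into a single Legendrian isotopy, since this is exactly the step where the interaction between the framing data (governed by vertex twists) and the rotation data (governed by vertex stabilizations) must be disentangled.
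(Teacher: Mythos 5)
You are trying to prove something the paper does not prove: the statement is a \emph{conjecture}, and the authors explicitly say so --- the only thing they claim is that the techniques of \cite{LO-Planar} yield the topologically trivial case, ``but not for arbitrary isotopy classes.'' So there is no proof in the paper to match, and the question is whether your sketch closes the gap. It does not: the two steps you yourself flag as the crux --- a relative/local normal form for Legendrian germs at a vertex, and the globalization of the local adjustments --- are precisely the content of the conjecture, and you assert rather than establish them. In particular, Theorem \ref{thrm:complete} (completeness of $(R_g,\rot_g)$) is proved in \cite{LO-Planar} only for \emph{topologically trivial} trivalent planar graphs; no relative version ``fixing the complicated part of the embedding'' is available, and inventing one for arbitrary smooth types is the open problem, not a lemma you may invoke. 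Your edge-level step also misuses \cite{Fuchs-Tabachnikov}: their theorem is not a move-by-move matching of the smooth and Legendrian Reidemeister calculi in which each smooth move differs from a front move by a cusp pair. It is a global statement --- smoothly isotopic Legendrian knots become Legendrian isotopic after sufficiently many stabilizations --- and the number and signs of the stabilizations needed are global data extracted from the whole isotopy; a generic smooth isotopy cannot be lifted locally, since intermediate diagrams need not admit coherent Legendrian lifts. So the reduction ``smooth Reidemeister sequence $\Rightarrow$ Legendrian sequence plus local corrections'' is not known even for knots in the local form you use it.

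A smaller but concrete error: you have the roles of the two vertex moves inverted relative to the paper's own lemmas in Section \ref{sec:moves}. There it is the \emph{vertex stabilization} (which reverses the cyclic order of edges in the contact plane at the vertex) that, together with an edge destabilization, carries $G_l$ to $G_{l+\frac{1}{2}}$ and hence bridges the two cosets in Theorem \ref{thrm:edge-stab-equivalence}; and for trivalent vertices the vertex twist is \emph{redundant}, being a composition of a vertex stabilization, an edge destabilization, and Reidemeister moves. Your claim that the twist is what ``changes the half-integer framing datum that edge stabilization provably cannot reach'' while the stabilization ``adjusts the rotation number'' contradicts this. The correct takeaway is that your outline is a reasonable research plan --- reduce to local germs at vertices and control edges by stabilization --- but with the vertex normal form unproved, the globalization unaddressed, and the Fuchs--Tabachnikov input misapplied, it is a restatement of the conjecture's difficulties rather than a proof of the statement.
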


Using the techniques of \cite{LO-Planar}, we have a proof of this conjecture if $g$ is topologically trivial but not for arbitrary isotopy classes.

Now we address the classification.  The $\Theta$-graph consists of two vertices $v_1,v_2$ and three edges $e_1,e_2,e_3$.  There are three cycles $\gamma_1 = e_1 \cup e_2; \gamma_2 = e_2 \cup e_3; \gamma_3 = e_3 \cup e_1$.  We orient the cycle $\gamma_i$ so that $e_i$ is oriented from $v_1$ to $v_2$.  Throughout this paper, we let $\Theta$ denote the abstract graph and $\theta$ denote a topologically trivial embedding  $\theta: \Theta \rightarrow (S^3,\xi_{std})$.   The Thurston-Bennequin and rotation numbers of each cycle determine vectors $\tbbf_{\theta},\rotbf_{\theta} \in \ZZ^3$
\begin{align*}
\tbbf_{\theta} &:= (\tb_{\theta}(\gamma_1),\tb_{\theta}(\gamma_2),\tb_{\theta}(\gamma_3)) \\
\rotbf_{\theta} &:= (\rot_{\theta}(\gamma_1),\rot_{\theta}(\gamma_2),\rot_{\theta}(\gamma_3))
\end{align*}
We also think of the pair $(\tbbf_{\theta},\rotbf_{\theta})$ as a vector in $\ZZ^6$.  Let $\text{Rot}_{\theta} := \rot_{\theta}(\gamma_1) + \rot_{\theta}(\gamma_2) + \rot_{\theta}(\gamma_3)$ denote the {\it total rotation number} of $\theta$.  

Recall that for the study of knotted graphs, there is an important distinction between embeddings $g: G \rightarrow M$ and their images. Given an abstract graph $G$, a spatial embedding $j: G \rightarrow S^3$, and an automorphism $\phi$ of $G$, it is not true in general that there exists an ambient isotopy $h$ satisfying $h \circ j = j \circ \phi$.  Hence it is necessary to distinguish between embeddings of $G$ up to ambient isotopy and their images up to ambient isotopy.  Pictorally, this corresponds to a difference between unlabeled diagrams up to equivalence and labeled diagrams up to equivalence.  In particular, consider the graph $G_{-\frac{1}{2}}$ in Figure \ref{fig:G_l}.  All possible ways to label the edges and vertices have the same image; however, there exists different labeling choices such that there is no sequence of Reidemeister moves that equates the two as labeled graphs.  This is analagous to a non-invertible knot.

The automorphisms of $\Theta$ are permutations of the vertices and edges and so the automorphism group $\text{Aut}(\Theta)$ is $S_2 \times S_3$.  Under $\phi \in \text{Aut}(\Theta)$, the image or orientation of a cycle may change and so the invariants $\tbbf_{\theta},\rotbf_{\theta}$ change as well after relabeling by $\phi$.  In Section \ref{sec:classification}, we define representations $\rho_2,\rho_3$ of the symmetric groups $S_2,S_3$ on $\ZZ^6$ that describe the algebraic effect on the classical invariants $(\tbbf,\rotbf)$ of permuting the vertices and edges, respectively.  Certain symmetries of $(\tbbf,\rotbf)$ under the action by $\rho = \rho_2 \times \rho_3$ correspond to relabelings of a graph.

\begin{theorem}
The classification of topologically trivial Legendrian embeddings of the $\Theta$-graph in $(S^3,\xi_{std})$ is as follows:
\begin{enumerate}
\item The set $\{G_l\}$ for $l \in \frac{1}{2} \ZZ$ and $l \geq -\frac{1}{2}$ is the set of nondestabilizeable, topologically trivial Legendrian embeddings $\theta: \Theta \rightarrow (S^3,\xi_{std})$.
\item If $\tbbf,\rotbf \in \ZZ^3$ satisfy 
\begin{align*}
\tb(\gamma_i) + | \rot(\gamma_i)| &\leq -1 \\
\tb(\gamma_i) + \rot(\gamma_i) &= 1 \text{ mod }2 \\
\Rot & \in \{-1,0,1\}
\end{align*}
for $i = 1,2,3$, then there exists a topologically trivial Legendrian embedding $\theta$ with $(\tbbf_{\theta},\rotbf_{\theta}) = (\tbbf,\rotbf)$.
\item Let $(\tbbf,\rotbf)$ be an admissible pair.  Then
\begin{enumerate}
\item if $\Rot = 0$, there exist two distinct, topologically trivial embeddings $\theta_1,\theta_2$ with $(\tbbf_{\theta_i},\rotbf_{\theta_i}) = (\tbbf,\rotbf)$, and
\item if $\Rot = \pm 1$, there exists one topologically trivial embedding $\theta$ with $(\tbbf_{\theta},\rotbf_{\theta}) = (\tbbf,\rotbf)$.
\end{enumerate}
\item Let $(\tbbf,\rotbf)$ be an admissible pair with $\Rot = 0$.  Then
\begin{enumerate}
\item if $\rho_3(\tau) \cdot (\tbbf,\rotbf) = (\tbbf,\rotbf)$ for some transposition $\tau \in S_3$, then up to relabeling there exists one topologically trivial embedding $\theta$ with $(\tbbf_{\theta},\rotbf_{\theta}) = (\tbbf,\rotbf)$.
\item if $\rho_3(\tau) \cdot (\tbbf,\rotbf) \neq (\tbbf,\rotbf)$ for every transposition $\tau \in S_3$, then up to relabeling there exists two topologically trivial embeddings $\theta_1,\theta_2$ with $(\tbbf_{\theta_i},\rotbf_{\theta_i}) = (\tbbf,\rotbf)$.
\end{enumerate}
\end{enumerate}
\end{theorem}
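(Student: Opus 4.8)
The plan is to reduce the entire classification to an analysis of Legendrian ribbons by invoking the main result of \cite{LO-Planar}: for topologically trivial Legendrian graphs the pair consisting of the ribbon $R_\theta$ and the rotation invariant $\rot_\theta$ is a complete invariant. Since $\Theta$ is planar and $\theta$ is topologically trivial, the ribbon $R_\theta$ is an embedded convex pair of pants (a three-holed sphere, $\chi = -1$) in $(S^3,\xi_{std})$, so its contact-geometric type is determined by its dividing set $\Gamma_\theta$. First I would classify the dividing sets that occur: tightness forces $\Gamma_\theta$ to have no contractible or boundary-parallel closed components by Giroux's criterion, and on a pair of pants this constrains the isotopy type of $\Gamma_\theta$ to a combinatorial family indexed by the numbers of intersection points of $\Gamma_\theta$ with the three core cycles $\gamma_1,\gamma_2,\gamma_3$, together with a half-integer twisting parameter. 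This twisting parameter is precisely the $l \in \tfrac12\ZZ$ appearing in the graphs $G_l$ of Figure \ref{fig:G_l}.

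Next I would translate the dividing-set data into the classical invariants. The Thurston--Bennequin number of each cycle is recovered as $\tb_\theta(\gamma_i) = -\tfrac12 \#(\gamma_i \cap \Gamma_\theta)$, which immediately yields both the bound $\tb(\gamma_i) + |\rot(\gamma_i)| \leq -1$ (the Bennequin inequality for the unknotted cycle $\gamma_i$ lying on the convex ribbon) and the parity relation $\tb(\gamma_i) + \rot(\gamma_i) \equiv 1 \pmod 2$ (from the even geometric intersection count together with the mod-$2$ behaviour of $\rot$ under stabilization). The constraint $\Rot \in \{-1,0,1\}$ is the crucial global restriction; I would derive it from the planarity of $R_\theta$, expressing $\Rot$ in terms of the relative Euler class of $\xi$ on the ribbon together with the turning contributions at the two vertices, which are pinned down by $\chi(R_\theta) = -1$. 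This establishes necessity of the three conditions in part (2); sufficiency I would prove by exhibiting explicit front projections, starting from the diagrams $G_l$ and applying edge stabilizations to hit every admissible $(\tbbf,\rotbf)$.

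For the counting statements in parts (3) and (4), the key observation is that $(\tbbf_\theta,\rotbf_\theta)$ does not in general determine the full invariant $\rot_\theta$: the ribbon is fixed by $\tbbf$ via the intersection numbers, but when $\Rot = 0$ there remains a residual $\ZZ/2$ choice (an orientation reversal of the rotation data that fixes $\rotbf$ yet exchanges the two completions to a genuine rotation invariant), producing the two embeddings $\theta_1,\theta_2$; when $\Rot = \pm 1$ this sign is forced, giving a unique $\theta$. For part (4) I would then pass from embeddings to images by analysing the action $\rho = \rho_2 \times \rho_3$ of $\Aut(\Theta) = S_2 \times S_3$ on $\ZZ^6$. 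A transposition $\tau \in S_3$ fixing $(\tbbf,\rotbf)$ realizes a relabeling carrying $\theta_1$ to $\theta_2$, so the two embeddings coincide as images and there is one realization up to relabeling; when no transposition fixes $(\tbbf,\rotbf)$, no relabeling identifies them and they remain two distinct images.

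Finally, part (1) follows by combining \thmref{thrm:theta-nondestab}, which already gives that the $G_l$ are nondestabilizeable and pairwise non-Legendrian isotopic, with the ribbon analysis above: each edge destabilization removes a boundary-parallel arc (equivalently, a bypass) from $\Gamma_\theta$, so every topologically trivial Legendrian $\Theta$-graph destabilizes to some $G_l$, and the $G_l$ are exactly those whose dividing sets admit no such reduction. The hardest part will be the step in the second paragraph: making the dictionary between the convex dividing set and the rotation invariant $\rot_\theta$ completely precise, in particular isolating the exact $\ZZ/2$ ambiguity responsible for the one-versus-two dichotomy and verifying that it is governed by the transposition symmetry under $\rho_3$.
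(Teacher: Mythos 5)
Your overall skeleton coincides with the paper's: reduce to the completeness theorem of \cite{LO-Planar} (Theorem \ref{thrm:complete}), realize admissible invariants by stabilizing the $G_l$, resolve the one-versus-two dichotomy by a residual $\ZZ/2$ coorientation choice, and handle images via the action of $\rho = \rho_2 \times \rho_3$. However, the convex-surface machinery on which you hang the middle of the argument is misapplied, and this is a genuine gap. The Legendrian ribbon $R_\theta$ is not a convex surface carrying a dividing set in the sense you use: by definition $\xi$ has no negative tangencies to $R_\theta$, its characteristic foliation collapses onto $\theta(\Theta)$, and each cycle $\gamma_i$ has twisting $0$ relative to $R_\theta$ because the ribbon \emph{is} the contact framing. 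Since a dividing-set count on a surface computes the twisting of $\xi$ relative to the \emph{surface} framing, any dividing set $\Gamma_\theta$ on the ribbon would have to satisfy $\#(\gamma_i \cap \Gamma_\theta) = 0$; your formula $\tb_\theta(\gamma_i) = -\tfrac{1}{2}\#(\gamma_i \cap \Gamma_\theta)$ is valid only on a surface inducing the Seifert framing, e.g.\ a convex sphere containing the graph. (That is exactly where the paper does use convex surface theory, but only in Section \ref{sec:minors}, via Proposition \ref{prop:LERP}.) So the classification of dividing sets in your first paragraph and the derivation of the admissibility conditions in your second are not grounded as stated. The paper avoids this entirely: $\tbbf$ determines the unoriented ribbon by an elementary count of half-twists $\tw_\theta(e_i)$ along edges (Lemma \ref{lemma:tb-determines-contact}), and the constraint $\Rot \in \{-1,0,1\}$ comes from the vertex-sign formula $\Rot_\theta = \tfrac{1}{2}(\sigma_\theta(v_1) - \sigma_\theta(v_2))$ (Lemma \ref{lemma:total-rot}, quoted from prior work) --- precisely the statement you defer to a relative Euler class computation and yourself flag as the hardest unproven step.

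Two further gaps. First, for $\Rot = 0$ you assert that both values of the residual $\ZZ/2$ choice are realized, ``producing'' $\theta_1, \theta_2$, but you never construct the second embedding, so part (3)(a) of the statement only gets ``at most two'' from your argument. The paper constructs it explicitly (Lemma \ref{lemma:realization-rot0}): exchange positive and negative stabilizations to form $h$, then take the mirror $\theta_- = M(h)$, which preserves $\tbbf$, restores $\rotbf$ exactly when $\Rot = 0$, and reverses the coorientation of the ribbon, so $\theta_+$ and $\theta_-$ are distinguished by Theorem \ref{thrm:complete}. Second, your argument for part (1) --- that every topologically trivial embedding destabilizes to some $G_l$ because destabilization removes boundary-parallel arcs of $\Gamma_\theta$ --- is both tied to the flawed framework above and left as an assertion; in particular the identification of the ``irreducible'' configurations with exactly the family $\{G_l\}$ is the content to be proved, not a consequence of Theorem \ref{thrm:theta-nondestab}. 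The paper instead deduces part (1) by counting (Proposition \ref{prop:GL-stab}): Lemma \ref{lemma:tb-determines-contact} together with Theorem \ref{thrm:complete} give at most two embeddings per admissible pair, and the stabilizations of the $G_l$ constructed in Lemmas \ref{lemma:realization} and \ref{lemma:realization-rot0} already account for all of them, so nothing nondestabilizeable exists outside $\{G_l\}$.
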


%%%%%%%%%%%%%%%%%%%%%%%%%%%%%%%%%%%%%%%%%%%%%%%%%%%%%%%
\section{Background} % (fold)
\label{sec:background}
%%%%%%%%%%%%%%%%%%%%%%%%%%%%%%%%%%%%%%%%%%%%%%%%%%%%%%%

\subsection{Spatial graphs}

An {\it abstract graph} or {\it graph} $G$ is a 1-complex consisting of vertices and the edges connecting them.  
%We assume that there are no 2-valent vertices, because having a subdivided edge is topologically equivalent to having a single edge.  
A {\it spatial graph} or {\it spatial embedding} is an embedding of a fixed abstract graph into a $3$-manifold $M$.  
Two spatial graphs $f(G)$ and $\bar{f}(G)$ are {\it ambient isotopic} if there exits an isotopy $h_t:\RR^3\to \RR^3$ such that $h_0=id$ and  $h_1(f(G))=\bar{f}(G).$  
As with knots, there is a set of Reidemeister moves for spatial graphs described by Kauffman in \cite{Kauffman}.

There is an additional subtlety when working with spatial graphs.  
It is necessary to distinguish between embeddings of a graph up to ambient isotopy and its images up to ambient isotopy.  
Given an abstract graph $G$, a spatial embedding $j: G \rightarrow S^3$, and an automorphism $\phi$ of $G$, it is not true in general that there exists an ambient isotopy $h$ satisfying $h \circ j = j \circ \phi$.  
In this article we are working with embeddings of graphs up to ambient isotopy.  
This corresponds to working with labeled diagrams up to equivalence.  
Without the labels the diagrams can look the same for different graphs because different edges are in the same positions.  
A spatial graph is {\it topologically trivial} if it lies on an embedded $S^2$ in $S^3$.  

%A {\it cut edge} of a connected graph $G$ is an edge $e$ such that $G \smallsetminus e$ is disconnected.  A {\it cut pair} of a connected graph $G$ is a pair of edges $e_1,e_2$ such that $G \smallsetminus (e_1 \cup e_2)$ is disconnected.

\subsection{Legendrian graphs}

A (cooriented) {\it contact structure} $(M,\xi)$ on a 3-manifold $M$ is a plane field $\xi = \text{ker}(\alpha)$ where the 1-form $\alpha$ satisfies the nonintegrability condition $\alpha \wedge d \alpha > 0$.  As a result, the contact structure induces an orientation on $M$ and the 2-form $d \alpha$ orients the contact planes.  The basic example is the standard contact structure $\xi = \text{ker}(dz - y dx)$ on $\RR^3$.  This can be extended by the one-point compactification of $\RR^3$ to the standard structure on $S^3$.

A spatial graph is {\it Legendrian} if its image is tangent to the contact planes at every point.  
Two Legendrian graphs $G_0$ and $G_1$ are {\it Legendrian isotopic} if there exists a one-parameter family of Legendrian graphs $G_t$, where $t\in[0,1]$.  For a Legendrian graph in $(\mathbb{R}^3, \xi_{std})$, the projection to the $xz$-plane is called the {\it front projection}.  
Though we are working in $(S^3, \xi_{std})$ we will work with a projection in $\RR^3$; this is possible since Legendrian graphs and isotopes of Legendrian graphs can be perturbed slightly to miss a chosen point.  
The front projection is usually given as an immersion, since at any crossing the strand that is over is determined by the slope of the tangents.  
However, at times we will show diagrams where the crossings are indicated for added clarity.  
A front projection of the Legendrian graph that is in general position with all double points away from vertices is called {\it generic}.  
Two generic front projections of a Legendrian graph are related by the three Reidemeister moves for knots and links together with two moves given by the mutual position of vertices and edges \cite{Baader}.  
See Figure~\ref{figure:Rmoves}.

%---- Figure ---- Reidemeister moves + + + -----------

\begin{figure}[htpb!]
\begin{center}
\begin{picture}(389, 170)
\put(0,0){\includegraphics[width=5.5in]{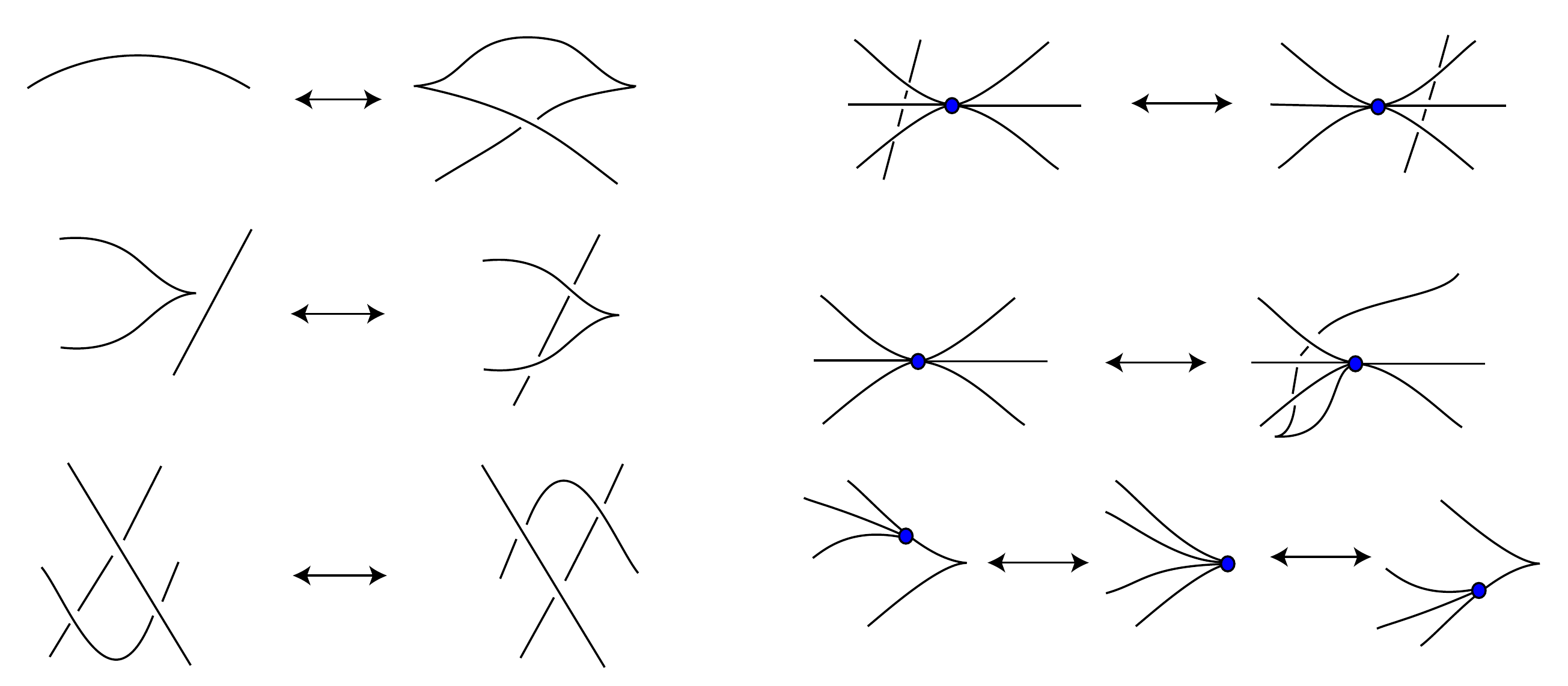}}
\put(84, 153){I}
\put(82,99){II}
\put(80,33){III}
\put(293, 153){III$_v$}
\put(290,88){V}
\put(260,35){V}
\put(330,37){V}
\end{picture}
\caption{\small Legendrian isotopy moves for graphs in the front projection:  Reidemeister moves I, II, and III, an edge passing under or over a vertex (III$_v$), an edge adjacent to a vertex rotates to the other side of the vertex (V). Move V is shown with edges on both sides and on one side of the vertex.  Reflections of these moves that are Legendrian front projections are also allowed.}
\label{figure:Rmoves}
\end{center}
\end{figure}

There are a number of different invariants for Legendrian graphs.  
The first two are a generalization of the classical invariants, Thurston--Bennequin, $\tb$, and rotation number, $\rot$, to graphs \cite{OP-1}.  
For a Legendrian graph $g$, with an ordering on its cycles, the {\it Thurston--Bennequin number}, $\overline{\tb_g}$, is the ordered list of the Thurston--Bennequin numbers for its cycles.
Similarly, the {\it rotation number}, $\overline{\rot_g}$, is the ordered list of the rotation numbers for its cycles.  

In a front diagram, the Thurston-Bennequin number and rotation number of an oriented cycle $\gamma$ can be computed by the formulas
\begin{align*}
\tb_g(\gamma) &= -\frac{1}{2} \# \text{cusps} + \text{ writhe } &
\rot_g(\gamma) &= \frac{1}{2} \left( \# \text{ down cusps } - \# \text{ up cusps} \right)
\end{align*}

Next, a {\it Legendrian ribbon} $R_g$ for a Legendrian graph $g: G \rightarrow (M,\xi)$ is a compact, oriented surface such that
\begin{enumerate}
\item $R_g$ contains $g(G)$ as its 1-skeleton,
\item $\xi$ has no negative tangencies to $R_g$,
\item there exists a vector field $X$ on $R_g$ tangent to the characteristic foliation of $R_g$ whose time-$t$ flow $\phi_t$ satisfies $\cap_{t \geq 0} \phi_t(R_g) = g(G)$,
\item the oriented boundary of $R_g$ is positively transverse to the contact structure $\xi$.
\end{enumerate}
The Legendrian ribbon is unique up to ambient contact isotopy and thus an invariant of $g$.  
This gives rise to a couple more invariants.  
The underlying unoriented surface $\overline{R}_g$ is the {\it contact framing} of $g$. 
The transverse link that forms the boundary of $R_g$ is the {\it transverse pushoff} of $g$.  

A topological knot type $K$ is Legendrian simple if $(\tb,\rot)$ are a complete set of invariants for Legendrian realizations of $K$.  
In other words, there is at most one Legendrian realization of $K$ for each choice of $(\tb,\rot)$.  
Several topological knot types are known to be Legendrian simple.  
A few examples are the unknot, the figure-8 knot, and torus knots \cite{Eliashberg_Fraser,Etnyre_Honda,Ding-Geiges}.  
For Legendrian graphs much less is known about Legendrian simplicity.  
In the case of topologically trivial graphs, we have the following result:

\begin{theorem}[\cite{LO-Planar}]
\label{thrm:complete}
Let $G$ be a trivalent, planar graph.  Then $(R_g,\rot_g)$ is a complete set of invariants for topologically trivial Legendrian embeddings $g: G \rightarrow (S^3,\xi_{std})$
\end{theorem}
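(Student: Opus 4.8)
The forward implication is immediate: if $g_0$ and $g_1$ are Legendrian isotopic, then the ambient contact isotopy carries $R_{g_0}$ to $R_{g_1}$ and preserves the rotation number of each cycle, so the pairs $(R_{g_0},\rot_{g_0})$ and $(R_{g_1},\rot_{g_1})$ agree. The content is the converse, and the plan is to run it through convex surface theory. First I would perturb each ribbon to be convex and record its dividing set $\Gamma_i \subset R_{g_i}$; since $(S^3,\xi_{std})$ is tight and $R_{g_i}$ is planar (the graph being topologically trivial), $\Gamma_i$ has no homotopically trivial closed components and is determined up to isotopy by $R_{g_i}$. Using the uniqueness of the Legendrian ribbon up to ambient contact isotopy, I would then apply a contact isotopy of $S^3$ taking $R_{g_1}$ to $R_{g_0}$, reducing to the case $R_{g_0}=R_{g_1}=:R$ with matching dividing sets, so that $g_0$ and $g_1$ are both Legendrian spines of the common convex surface $R$, each recovered as the core $\bigcap_{t\ge 0}\phi_t(R)$ of the retracting flow in condition (3) of the ribbon definition.

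The next step is to show that the ambient contact germ is globally reconstructed from $R$. Because $g$ is topologically trivial it lies on an embedded sphere $\Sigma\cong S^2$, which I would arrange to be convex; then $\Sigma$ splits $S^3$ into two balls, on each of which $\xi_{std}$ is the unique tight structure with the prescribed convex boundary by Eliashberg's theorem, so the contact structure is reconstructed from $\Sigma$ and its dividing set, and hence is pinned down by the ribbon $R$ sitting inside a neighborhood of $\Sigma$ together with the complementary faces. Trivalence enters here to fix the standard local model of $R$ and of $\Gamma$ at each vertex, so that the cut-open pieces and their convex boundaries are standard and the gluing is controlled. The upshot of this step is that the decorated pair $(S^3,\xi_{std},R)$ is determined up to contactomorphism by $R$ alone.

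It remains to classify Legendrian spines of the fixed ribbon $R$ up to Legendrian isotopy. By Giroux flexibility, any two characteristic foliations on $R$ sharing the dividing set $\Gamma$ are related by an isotopy fixing $\Gamma$, which I would promote, via the Legendrian realization principle, to a Legendrian isotopy of the associated spines whenever they are isotopic as graphs in $R$; the Thurston--Bennequin number of each cycle is then forced, being the geometric intersection count with $\Gamma$, and is therefore already encoded by $R$. The single remaining invariant is the rotation number of each cycle, which I would compute as a relative Euler class of $\xi$ evaluated against a spanning chain, so that the hypothesis $\rot_{g_0}=\rot_{g_1}$ fixes the last discrete choice and produces a Legendrian isotopy from $g_0$ to $g_1$ supported near $R$. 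I expect the main obstacle to be precisely this final identification: proving that no invariant beyond $(R,\rot)$ survives, equivalently that the rotation numbers exactly parametrize the Legendrian spines of a fixed ribbon, and in particular adapting the knot-theoretic computation of $\rot$ to the trivalent graph so that the Euler class contributions at the vertices are correctly accounted for in each cycle.
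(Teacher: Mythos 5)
First, a point of orientation: the paper you are reading does not prove this statement at all --- it is imported from \cite{LO-Planar} --- so there is no in-paper proof to compare against, and your sketch has to stand on its own. Your toolbox (convexity, Eliashberg's uniqueness of the tight contact structure on the ball, Giroux flexibility, the Legendrian Realization Principle) is the right one, but as written the argument has a circularity at its first substantive step. The hypothesis that the ribbon invariants agree only means that the cooriented ribbons are \emph{smoothly} ambient isotopic; this is exactly how the present paper applies the theorem (see the proofs of Lemmas \ref{lemma:realization-rot0} and \ref{lemma:realization-rot1}, where ribbons are compared via smooth isotopies plus coorientations). Your reduction to a common ribbon invokes ``uniqueness of the Legendrian ribbon up to ambient contact isotopy,'' but that uniqueness statement concerns the ribbons of a single fixed Legendrian graph; it says nothing about two different Legendrian graphs whose ribbons happen to be smoothly isotopic. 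Upgrading a smooth isotopy of ribbons to an ambient \emph{contact} isotopy is essentially the entire content of the theorem, so assuming it at the outset begs the question.

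Second, the place where the proof could actually be closed --- showing that the contact structure near the spanning sphere $\Sigma$ is determined by the given data --- is precisely where the rotation numbers must enter, and your sketch leaves this open (you yourself flag it as ``the main obstacle''). Concretely, $\Sigma$ decomposes as the ribbon $R$ together with complementary faces, which are disks whose boundaries are components of the transverse pushoff; the germ of $\xi$ near $R$ is standard and Eliashberg's theorem handles the two complementary balls, but the characteristic foliations on the faces are \emph{not} determined by $R$. They are pinned down by the self-linking numbers of their transverse boundary circles (equivalently, by signed counts of singular points of the foliation), and it is exactly the rotation invariant $\rot_g$ that computes these; without carrying out that identification, the uniqueness of the glued-up contact structure, and hence the Legendrian isotopy, does not follow. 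Two smaller inaccuracies: the dividing set of a convex surface is \emph{not} determined up to isotopy by the surface (it is extra data; Giroux's criterion in a tight manifold only forbids contractible components), and a Legendrian ribbon has empty dividing set --- it is all positive region --- so the convex perturbation in your first step does not produce the data you want it to.
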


For a Legendrian knot $K$, different Legendrian knots in the same topological class can be obtained by stabilizations, where a single twist is added in an arc of the knot, consistent with the contact structure.  
This can be defined in $(\mathbb{R}^3,\xi_{std})$ with the front projection.   
A \textit{stabilization} means replacing an arc in the front projection of $K$ by one of the zig-zags in Figure~\ref{fig:stabilizations}. 
The stabilization is positive if the new cusps are oriented downwards and negative if the cusps are oriented upwards 
The classical invariants of the stabilized knot satisfy $\tb(S_{\pm}(K)) = \tb(K) -1$ and $\rot(S_{\pm}(K)) = \rot(K) \pm 1$. Thus stabilization changes the Legendrian isotopy class.  It is a well defined operation and can be performed anywhere along a knot.

% -----------Figure stabilizations -----------------

\begin{figure}[htpb!]
\begin{center}
\begin{picture}(300, 120)
\put(0,0){\includegraphics[width=4.3in]{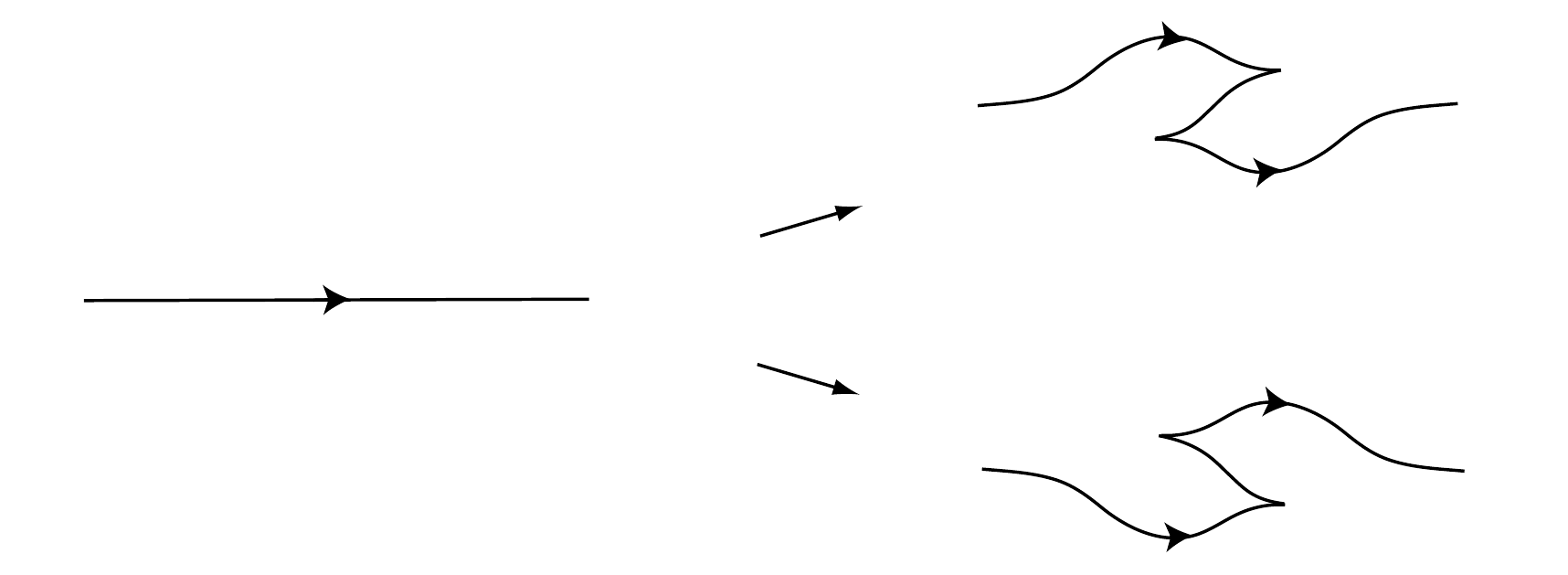}}
\put(53,57){$K$}
\put(270,77){\small $S_+(K)$}
\put(270,5){\small $S_-(K)$}
\end{picture}
\caption{Positive and negative stabilizations in the front projection of a knot or edge }\label{fig:stabilizations}
\end{center}
\end{figure}
%%%%%%%%%%%%%%%%%%%%%%%%%

For Legendrian graphs, {\it edge stabilizations} along a fixed edge are defined in the same way.  They can be performed at any point along a given edge but stabilizing along different edges will not be equivalent in general.

Consider the map diffeomorphism $\phi: (x,y,z) \mapsto (x,-y,-z)$ of $\RR^3$ to itself.  It is an orientation-preserving contactomorphism of $\RR^3$ since $\phi^*(\alpha) = - \alpha$.  
However, it reverses the coorientation of the contact planes.  If $g$ is a Legendrian graph, the {\it mirror} $M(g)$ is the image of $g$ under the map $\phi$.  
In the front projection, this corresponds to mirroring across the x-axis.

\begin{lemma}
Let $g: G \rightarrow (S^3,\xi_{std})$ be a Legendrian graph and let $M(g)$ be its mirror.  Then
\begin{enumerate}
\item the embeddings $g$ and $M(g)$ are ambient isotopic,
\item the contact framings $\overline{R}_g,\overline{R}_{M(g)}$ agree,
\item the Legendrian ribbons $R_g,R_{M(g)}$ have opposite coorientations,
\item if $\gamma$ is an oriented cycle of $G$, then 
\[\tb_{M(g)}(\gamma) = \tb_g(\gamma) \qquad \rot_{M(g)}(\gamma) = - \rot_g(\gamma)\]
\end{enumerate}
\end{lemma}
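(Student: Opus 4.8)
The plan is to derive all four parts from two elementary features of the map $\phi(x,y,z) = (x,-y,-z)$: that it is rotation by $\pi$ about the $x$-axis, hence orientation-preserving and smoothly isotopic to the identity, and that $\phi^*\alpha = -\alpha$ for $\alpha = dz - y\,dx$, so it is a contactomorphism reversing the coorientation of $\xi_{std}$.

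For part (1), I would write down the explicit ambient isotopy given by rotation about the $x$-axis, $h_t(x,y,z) = (x,\ \cos(\pi t)\,y - \sin(\pi t)\,z,\ \sin(\pi t)\,y + \cos(\pi t)\,z)$, so that $h_0 = \Id$ and $h_1 = \phi$. Then $h_1 \circ g = \phi \circ g = M(g)$ exhibits $g$ and $M(g)$ as ambient isotopic, and after one-point compactification the same isotopy works in $S^3$. This $h_t$ will be reused throughout.

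For parts (2) and (3), I would first verify $\phi^*\alpha = \phi^*(dz - y\,dx) = -dz + y\,dx = -\alpha$, so $\phi$ preserves the contact planes $\xi = \kernel\alpha$ but, because $\phi^*(d\alpha) = -d\alpha$, reverses their coorientation. Since $\phi$ is a contactomorphism, $\phi(R_g)$ satisfies the four defining conditions of a Legendrian ribbon for $\phi \circ g = M(g)$, except that the sign reversal in $d\alpha$ reverses the induced orientation, equivalently the coorientation. By uniqueness of the Legendrian ribbon up to contact isotopy, this identifies $R_{M(g)}$ with $\phi(R_g)$ carrying the opposite coorientation, which is part (3). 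Forgetting coorientations, $\overline{R}_{M(g)} = \phi(\overline{R}_g)$, and the rotation isotopy $h_t$ of part (1) carries $\overline{R}_g$ to $\phi(\overline{R}_g)$; hence the contact framings agree, which is part (2).

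For part (4) I would pass to the front projection, where $\phi$ descends to the reflection $(x,z) \mapsto (x,-z)$ across the $x$-axis. Cusps are preserved in number, with every up cusp of $g$ becoming a down cusp of $M(g)$ and vice versa, because the reflection flips the vertical direction while carrying the orientation of $\gamma$ along; plugging this into $\rot_g(\gamma) = \frac{1}{2}(\#\text{down cusps} - \#\text{up cusps})$ immediately gives $\rot_{M(g)}(\gamma) = -\rot_g(\gamma)$. The delicate point, and the step I expect to be the main obstacle, is the writhe: two sign changes occur at each crossing and both must be accounted for. The planar reflection across a line reverses the sign of every oriented crossing, but because the Legendrian over/under convention is dictated by the slopes $dz/dx = y$, which all negate under $z \mapsto -z$, re-resolving the reflected diagram swaps over and under at every crossing and reverses each crossing sign a second time. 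The two reversals cancel, so the writhe is unchanged, and together with the invariance of the cusp count this yields $\tb_{M(g)}(\gamma) = -\frac{1}{2}\#\text{cusps} + \text{writhe} = \tb_g(\gamma)$. As an independent check on the $\tb$ computation I would note that $\tb_g(\gamma)$ is the linking number of $\gamma$ with its contact push-off, which an orientation-preserving diffeomorphism preserving the contact framing (parts (1) and (2)) must fix.
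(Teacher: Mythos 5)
Your proposal is correct and follows essentially the same route as the paper's proof: the rotation by $\pi$ about the $x$-axis supplies the ambient isotopy and carries the ribbon to the ribbon of the mirror with reversed coorientation, while the front-projection reflection exchanges up and down cusps, giving $\rot \mapsto -\rot$ and $\tb \mapsto \tb$. The only difference is one of detail, not of method: you make explicit the isotopy $h_t$, the ribbon-uniqueness step, and the double sign cancellation (reflection flips crossing signs, but the slope-determined over/under resolution flips them back) that the paper compresses into the single assertion that the rotation preserves the writhe.
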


\begin{proof}
The map $\phi$ is a rotation by $\pi$ around the $x$-axis, so $g$ and $M(g)$ are clearly smoothly isotopic.  In addition, this rotation takes $\overline{R}_g$ to $\overline{R}_{M(g)}$ but the coorientations on the contact framing induced by $\xi_{std}$ differ by a sign.  Finally, the rotation does not change the number of cusps or the writhe of a cycle but it does exchange up and down cusps.  Thus, $\tb_g(\gamma)$ is fixed but $\rot_g(\gamma)$ changes sign.
\end{proof}

%%%%%%%%%%%%%%%%%%%%%%%%%%%%%%%%%%%%%%%%%%%%%%%%%%%%%%%
\section{Classification of $\Theta$-graphs} % (fold)
\label{sec:classification}
%%%%%%%%%%%%%%%%%%%%%%%%%%%%%%%%%%%%%%%%%%%%%%%%%%%%%%%

The $\Theta$-graph consists of two vertices $v_1,v_2$ and three edges $e_1,e_2,e_3$.  There are three cycles $\gamma_1 = e_1 \cup e_2; \gamma_2 = e_2 \cup e_3; \gamma_3 = e_3 \cup e_1$.  We orient the cycle $\gamma_i$ so that $e_i$ is oriented from $v_1$ to $v_2$.  Let $\theta: \Theta \rightarrow (S^3,\xi_{std})$ be a Legendrian embedding.  The Thurston-Bennequin and rotation numbers of each cycle determine vectors $\tbbf_{\theta},\rotbf_{\theta} \in \ZZ^3$
\begin{align*}
\tbbf_{\theta} &:= (\tb_{\theta}(\gamma_1),\tb_{\theta}(\gamma_2),\tb_{\theta}(\gamma_3)) \\
\rotbf_{\theta} &:= (\rot_{\theta}(\gamma_1),\rot_{\theta}(\gamma_2),\rot_{\theta}(\gamma_3))
\end{align*}

Let $\text{Rot}_{\theta} := \rot_{\theta}(\gamma_1) + \rot_{\theta}(\gamma_2) + \rot_{\theta}(\gamma_3)$ denote the {\it total rotation number} of $\theta$.  It is also convenient to think of the Thurston-Bennequin invariant as a triple of half-integers assigned to each edge.  Define
\[tw_{\theta}(e_i) = \frac{1}{2}\left( \tb_{\theta}(\gamma_{i-1}) + \tb_{\theta}(\gamma_{i}) - \tb_{\theta}(\gamma_{i+1}) \right)\]
for $i = 1,2,3$ mod $3$.  It follows that $\tb_{\theta}(\gamma_i) = \tw_{\theta}(e_i) + \tw_{\theta}(e_{i+1})$.

\begin{lemma}
\label{lemma:tb-determines-contact}
Suppose that $\theta,\theta'$ are topologically trivial Legendrian embeddings of $\Theta$ such that $\tbbf_{\theta} = \tbbf_{\theta'}$.  Then $\overline{R}_{\theta}$ and $\overline{R}_{\theta'}$ are isotopic.
\end{lemma}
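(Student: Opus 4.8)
The plan is to reduce the statement to a comparison of twisting numbers along the three edges. Solving the linear system $\tb_\theta(\gamma_i) = \tw_\theta(e_i) + \tw_\theta(e_{i+1})$ gives
\[ \tw_\theta(e_i) = \tfrac{1}{2}\left(\tb_\theta(\gamma_{i-1}) + \tb_\theta(\gamma_i) - \tb_\theta(\gamma_{i+1})\right), \]
so the hypothesis $\tbbf_\theta = \tbbf_{\theta'}$ immediately yields $\tw_\theta(e_i) = \tw_{\theta'}(e_i)$ for $i = 1,2,3$. Thus it suffices to prove the purely geometric assertion that the unoriented ribbon of a topologically trivial $\Theta$-graph is determined up to ambient isotopy by its three edge-twisting numbers.

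To exploit topological triviality I would first normalize the two spines. Each of $\theta(\Theta)$ and $\theta'(\Theta)$ lies on an embedded $2$-sphere in $S^3$; by the Schoenflies theorem such a sphere is standard, and $\Theta$ has an essentially unique embedding on $S^2$. Hence there is an ambient isotopy of $S^3$ carrying $\theta(\Theta)$ to a fixed planar model $\Theta_0 \subset S^2 \subset S^3$, and likewise for $\theta'(\Theta)$. Composing one of these isotopies with a symmetry of the pair $(S^3,\Theta_0)$ realizing the appropriate element of $\Aut(\Theta)$, I may align the edge labelings so that, after the isotopies, both spines equal $\Theta_0$ with matching labels $e_i$.

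With the spine fixed, the contact framing is a regular neighborhood of $\Theta_0$, assembled from a disk at each vertex together with a band running along each edge $e_i$; the cyclic (fat-graph) structure at the vertices is prescribed by the planar embedding $\Theta_0$. For a planar, genus-zero ribbon the bands may be taken unknotted and pairwise unlinked, so each band is isotopic, rel its attaching arcs, to a flat strip carrying a prescribed amount of twisting, and this twisting is exactly $\tw_\theta(e_i)$ because the framing that the surface induces on the cycle $\gamma_i$ is $\tb_\theta(\gamma_i) = \tw_\theta(e_i) + \tw_\theta(e_{i+1})$. Since $\tw_\theta(e_i) = \tw_{\theta'}(e_i)$, the two ribbons agree band by band, and assembling these band isotopies over the common spine $\Theta_0$ produces an ambient isotopy carrying $\overline{R}_\theta$ to $\overline{R}_{\theta'}$.

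The main obstacle is the normalization of the bands in the third step: one must verify that, for a topologically trivial ribbon, no embedded invariant survives beyond the individual band twistings — that the bands carry no residual knotting or mutual linking once the spine is standard — and that the half-integer corner contributions at the two trivalent vertices assemble correctly, so that the band twistings sum to the integral cycle framings $\tb_\theta(\gamma_i)$. Establishing this local-to-global framing computation is the heart of the argument; the reduction in the first paragraph and the normalization of the spine are then routine.
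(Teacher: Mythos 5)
Your proposal follows essentially the same route as the paper's proof: both use topological triviality to present the ribbon as a neighborhood of the graph on an embedded sphere, built from vertex disks and edge bands, with the band twistings forced to equal $\tw_{\theta}(e_i)$ by the relation $\tb_{\theta}(\gamma_i) = \tw_{\theta}(e_i) + \tw_{\theta}(e_{i+1})$, so that equal $\tbbf$ gives isotopic unoriented ribbons. The step you flag as the main obstacle---that after normalizing near the (trivalent) vertices the bands carry no residual knotting or linking and their twistings are pinned down by the cycle framings---is exactly the assertion the paper itself makes in one sentence (isotoping $R_{\theta}$ and $\Sigma_{\theta}$ to agree near the vertices, then adding $\tw_{\theta}(e_i)$ half twists along each edge) without further justification, so your write-up is at the same level of completeness as the published argument.
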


\begin{proof}
Let $R_{\theta},R_{\theta'}$ be the Legendrian ribbons of $\theta,\theta'$ and let $\Sigma_{\theta},\Sigma_{\theta'}$ be a smooth spheres containing the images of $\theta,\theta'$, respectively.

The $\Theta$-graph is trivalent, so we can isotope $R_{\theta}$ and $\Sigma_{\theta}$ rel $\theta(\Theta)$ so that they agree in open neighborhoods of $\theta(v_1),\theta(v_2)$.  Up to isotopy, the Legendrian ribbon $R_{\theta}$ can be obtained from a tubular neighborhood of $\theta(\Theta)$ in $\Sigma_{\theta}$ by adding $\tw_{\theta}(e_i)$ half twists along the edge $e_i$ for $i=1,2,3$.  $R_{\theta'}$ can be obtained from $\Sigma_{\theta'}$ similarly.  Since $\tbbf_{\theta} = \tbbf_{\theta'}$, this implies that the underlying unoriented surfaces $\overline{R}_{\theta}$ and $\overline{R}_{\theta'}$ are isotopic.
\end{proof}

The contact planes $\xi_{\theta(v_1)}$ and $\xi_{\theta(v_2)}$ are oriented.  Define the sign of a vertex to be
\[\sigma_{\theta}(v) := \begin{cases}
1 & \text{if $\theta(e_1),\theta(e_2),\theta(e_3)$ are positively cyclically ordered in $\xi_{\theta(v)}$} \\
-1 & \text{if $\theta(e_1),\theta(e_2),\theta(e_3)$ are negatively cyclically ordered in $\xi_{\theta(v)}$} \\
\end{cases} \]

\begin{lemma}
\label{lemma:total-rot}
Let $\theta$ be a topologically trivial Legendrian embedding with rotation invariant $\rot_{\theta}$.  Then the total rotation number satisfies
\[\Rot_{\theta} = \frac{1}{2} \left( \sigma_{\theta}(v_1) - \sigma_{\theta}(v_2) \right) \]
\end{lemma}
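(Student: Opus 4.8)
The plan is to compute each $\rot_{\theta}(\gamma_i)$ as the winding number of the Legendrian tangent direction of $\gamma_i$ with respect to the global trivialization of $\xi_{std}$, and then to exploit the fact that the three cycles share edges so that the edge contributions cancel in the total $\Rot_{\theta}$. Concretely, orient each edge $e_i$ from $v_1$ to $v_2$, let $u_i \in \xi_{\theta(v_1)}$ be the direction in which $e_i$ leaves $v_1$, and let $w_i \in \xi_{\theta(v_2)}$ be the direction in which $e_i$ arrives at $v_2$. Writing $T_i$ for the total turning of the tangent direction along $e_i$ (a real number, measured in the trivialization) and $\angle(\cdot \to \cdot) \in (-\pi,\pi)$ for the exterior angle at a corner, the standard turning-number (Umlaufsatz) decomposition of the winding number gives
\begin{equation*}
2\pi\,\rot_{\theta}(\gamma_i) = T_i + \angle(w_i \to -w_{i+1}) - T_{i+1} + \angle(-u_{i+1} \to u_i),
\end{equation*}
since $\gamma_i$ runs along $e_i$ forwards and $e_{i+1}$ backwards (indices mod $3$). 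Here I use that each vertex of $\Theta$ is a genuine transverse corner: the tangent directions of distinct edges in $\xi_{\theta(v)}$ are distinct, so each exterior angle is well defined in the open interval $(-\pi,\pi)$ and no corner is a cusp.

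Summing over $i = 1,2,3$, the edge contributions telescope, $\sum_i T_i - \sum_i T_{i+1} = 0$, and hence
\begin{equation*}
2\pi\,\Rot_{\theta} = \sum_{i} \angle(w_i \to -w_{i+1}) + \sum_{i} \angle(-u_{i+1} \to u_i).
\end{equation*}
Thus the total rotation number is entirely a sum of corner angles at the two vertices, with no dependence on the edge geometry, and it remains only to evaluate the two vertex sums.

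The key step is the claim that the three corner angles at a vertex sum to $\pm\pi$, with the sign governed by the cyclic order of the edge directions. For $v_1$ I would regard $S(v_1) := \sum_i \angle(-u_{i+1} \to u_i)$ as a function of the triple of distinct directions $(u_1,u_2,u_3)$. Each term is continuous away from coincidences of directions, so $S(v_1)$ is continuous on the configuration space of ordered triples of distinct directions in $\xi_{\theta(v_1)}$, which has exactly two connected components distinguished by cyclic orientation. On the other hand the three \emph{unreduced} angles sum to $-3\pi$, so $S(v_1) \equiv \pi \pmod{2\pi}$, and since $S(v_1) \in (-3\pi,3\pi)$ it in fact takes values in the discrete set $\{-\pi,\pi\}$. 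A continuous function into a discrete set is constant on each component, and evaluating at the symmetric configuration with $u_j$ at angle $2\pi(j-1)/3$ gives $S(v_1) = \pi$ exactly when the $u_i$ are positively cyclically ordered; hence $S(v_1) = \pi\,\sigma_{\theta}(v_1)$. The same argument at $v_2$, using that the edges emanate from $v_2$ in the directions $-w_i$ (so the cyclic order of the $w_i$ records $\sigma_{\theta}(v_2)$) and that the unreduced angles there sum to $+3\pi$, yields $\sum_i \angle(w_i \to -w_{i+1}) = -\pi\,\sigma_{\theta}(v_2)$. Substituting both into the identity above gives $2\pi\,\Rot_{\theta} = \pi\sigma_{\theta}(v_1) - \pi\sigma_{\theta}(v_2)$, which is the assertion.

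The main obstacle I anticipate is not the cancellation but the bookkeeping at the vertices. One must fix, once and for all, the convention by which a cycle of the graph is rounded into a genuine Legendrian knot at each vertex, and check that the exterior-angle (``short way'') rounding used above is the one computing the defined invariant $\rot_{\theta}(\gamma_i)$, equivalently the cusp count $\tfrac12(\#\text{down} - \#\text{up})$ in a front. Verifying that this rounding introduces no spurious cusps, and keeping the two orientation signs at $v_1$ versus $v_2$ straight, since each edge is traversed forwards at one end of a cycle and backwards at the other, is the delicate part; everything else is a formal consequence of the turning-number decomposition.
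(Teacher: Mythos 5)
Your proof is correct, but it takes a genuinely different route from the paper: the paper does not argue this lemma at all --- its entire ``proof'' is the citation that the statement is a reformulation of Lemma 2.1 of \cite{LO-Planar} and Lemma 5 of \cite{OP-Theta} --- whereas you give a self-contained computation. Concretely, you identify $\rot_{\theta}(\gamma_i)$ with the winding number of the tangent direction in the global trivialization of $\xi_{std}$, observe that the edge turnings $T_i$ telescope away when the three cycles are summed, and then evaluate the two vertex corner sums by a continuity argument on the configuration space of ordered triples of distinct directions, which has exactly two components distinguished by cyclic order. The computations check out: each corner sum is forced into $\{-\pi,\pi\}$ by the mod-$2\pi$ count together with the bound $|S| < 3\pi$; the symmetric configuration gives $\pi/3+\pi/3+\pi/3 = \pi$ at $v_1$ and $-\pi/3-\pi/3-\pi/3 = -\pi$ at $v_2$; and negating all three directions preserves cyclic order, so indeed $2\pi\,\Rot_{\theta} = \pi\sigma_{\theta}(v_1) - \pi\sigma_{\theta}(v_2)$. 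Your route buys two things the citation leaves invisible: topological triviality is never used (the identity holds for every Legendrian $\Theta$-graph in $(\RR^3,\xi_{std})$), and the constraint $\Rot_{\theta} \in \{-1,0,1\}$ is localized in the fact that a trivalent vertex's corner sum can only be $\pm\pi$. The one point you flag but do not close --- that the exterior-angle (short-way) rounding at a vertex is the convention computing the paper's front-projection invariant $\rot = \frac{1}{2}\left( \# \text{ down cusps } - \# \text{ up cusps} \right)$ --- is genuinely needed, but it is consistent with the paper's own usage: in the proof of Lemma \ref{lemma:Gl-basics}, the two vertices of the cycle formed by the top edge and another edge of $G_l$ are counted as its ``2 cusps,'' which is exactly the statement that a corner reversing horizontal direction contributes one cusp of the short-way sign while a non-reversing corner contributes none. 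Spelling out that two-case check (according to whether the incoming and outgoing tangent directions lie in the same half-plane of the trivialized contact plane or not) would make your proof complete; and your standing genericity assumption that the three tangent directions at each vertex are pairwise distinct is the same implicit assumption the paper needs for $\sigma_{\theta}(v)$ to be defined, so it costs nothing.
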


\begin{proof}
This is a reformulation of Lemma 2.1 in \cite{LO-Planar} and Lemma 5 in \cite{OP-Theta}.
\end{proof}

For each Legendrian unknot $K$, the Thurston-Bennequin and rotation numbers must satisfy
\begin{align}
\tb(K) + |\rot(K)| &\leq -1\\
\tb(K) + \rot(K) &= 1 \text{ mod }2
\end{align}
Each cycle of a topologically trivial Legendrian $\theta$ must also satisfy these constraints.  In addition, Lemma \ref{lemma:total-rot} implies that the total rotation number must satisfy
\begin{align}
\Rot_{\theta} &\in \{-1,0,1\}
\end{align}
A pair $(\tbbf,\rotbf)$ is {\it admissible} if it satisfies the above three restrictions.

Recall the set of graphs $\{G_l\}$ defined in Section \ref{sec:introduction}.  By abuse of notation, we let $G_l$ denote any embedding $\theta$ whose image is $G_l$.  Up to isotopy, there are $12$ possible labelings: there are $2!$ ways to label the vertices and $3!$ ways to label the edges.

\begin{lemma}
\label{lemma:Gl-basics}
Fix $l \in \frac{1}{2}\ZZ_{\geq -\frac{1}{2}}$ and let $\theta$ be an embedding with image $G_l$.  Then
\begin{enumerate}
\item if $\theta(e_1)$ is the top edge, then
\[ \tbbf_{\theta} = (-1,-2 - 2l,-1) \qquad \text{ and } \qquad \twbf_{\theta} = (l, -1 - l, -1 - l),\]
\item the total rotation number satisfies
\[\Rot_{\theta} = 2l + 1 \text{ mod }2, \]
\item $\theta$ is topologically trivial, and
\item $\theta$ is nondestabilizeable.
\end{enumerate}
\end{lemma}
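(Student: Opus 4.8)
\textbf{Proof proposal for Lemma \ref{lemma:Gl-basics}.}

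The plan is to address the four claims essentially in order, since each builds on the previous. First, for claim (1), I would compute the classical invariants directly from the front projection of $G_l$ described in the introduction. The front consists of two vertices joined by three cusp-free arcs, with $l + \frac{1}{2}$ twists inserted in the bottom two strands. For each cycle $\gamma_i$ I would count cusps and compute the writhe using the formula $\tb_\theta(\gamma) = -\frac{1}{2}\#\text{cusps} + \text{writhe}$. Since the arcs are cusp-free away from the twist region, the cusps all come from the vertices (where arcs meet) and from the twist region. The writhe contributions come entirely from the crossings introduced by the $l+\frac{1}{2}$ twists in the bottom pair. With $\theta(e_1)$ the top edge, the top cycle $\gamma_3 = e_3 \cup e_1$ and the cycle $\gamma_1 = e_1 \cup e_2$ involve only one strand of the twist region, while $\gamma_2 = e_2 \cup e_3$ is the cycle formed by the two bottom strands and thus absorbs all the twisting, giving $\tb_\theta(\gamma_2) = -2 - 2l$. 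I expect the routine crossing-count to confirm $\tbbf_\theta = (-1, -2-2l, -1)$, and then $\twbf_\theta$ follows immediately by inverting the relation $\tb_\theta(\gamma_i) = \tw_\theta(e_i) + \tw_\theta(e_{i+1})$, or equivalently from the defining formula for $\tw_\theta(e_i)$.

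For claim (2), I would apply Lemma \ref{lemma:total-rot}, which gives $\Rot_\theta = \frac{1}{2}(\sigma_\theta(v_1) - \sigma_\theta(v_2))$. Since each $\sigma_\theta(v_j) \in \{-1,+1\}$, the total rotation number lies in $\{-1,0,1\}$, and its parity is forced: $\Rot_\theta$ is an integer precisely when $\sigma_\theta(v_1) = -\sigma_\theta(v_2)$ and is zero when the signs agree. The cleaner route, however, is to observe that $2\,\Rot_\theta = \rot_\theta(\gamma_1)+\rot_\theta(\gamma_2)+\rot_\theta(\gamma_3)$ must satisfy the parity constraint inherited from each cycle being an unknot, combined with claim (1): since $\tb_\theta(\gamma_i) + \rot_\theta(\gamma_i) \equiv 1 \pmod 2$ for each $i$, summing gives $\Rot_\theta \equiv 3 + \tb_\theta(\gamma_1)+\tb_\theta(\gamma_2)+\tb_\theta(\gamma_3) \equiv 1 + (-2-2l) \equiv 2l+1 \pmod 2$, which is exactly the stated congruence.

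Claim (3), topological triviality, should be immediate from the construction: the front of $G_l$ manifestly lies on an embedded sphere, since the whole diagram can be pushed onto a standardly embedded $S^2$ (the twist region is a local modification that does not obstruct planarity of the underlying spatial graph). I would make this precise by exhibiting the sphere explicitly or citing that the embedding is a connected sum / local twisting of the planar $\Theta$.

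Claim (4), nondestabilizeability, is the main obstacle and where the real work lies. A destabilization along an edge $e_i$ would increase $\tw_\theta(e_i)$ by one and correspondingly raise the Thurston--Bennequin numbers of the two cycles through $e_i$ by one each. The strategy is to show this is impossible by combining the admissibility constraints with Lemma \ref{lemma:tb-determines-contact} and the completeness Theorem \ref{thrm:complete}. Concretely, I would argue that any destabilization produces a Legendrian $\Theta$ whose cycle invariants violate the unknot bound $\tb(\gamma_i) + |\rot(\gamma_i)| \leq -1$, or else whose resulting $(\tbbf,\rotbf)$ fails admissibility because $\Rot$ would leave $\{-1,0,1\}$. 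From claim (1), the cycles already sit at $\tb = -1$, which is the maximal Thurston--Bennequin value for the unknot; hence raising any $\tb_\theta(\gamma_i)$ above $-1$ is obstructed directly by the unknot bound, and this blocks destabilization along the edges $e_1$ and $e_3$ (the top edge and its partner). The remaining case—destabilizing within the heavily twisted region along $e_2$—requires checking that no edge stabilization there can be undone without simultaneously forcing one of the other cycles above its maximal $\tb$; I expect this to reduce to the observation that the two bottom strands carry linked twisting so that a single destabilization on $e_2$ cannot preserve the framing data recorded by $\twbf_\theta$ consistently with the completeness of $(R_g, \rot_g)$. Carefully ruling out this last case is the step I anticipate needing the most attention.
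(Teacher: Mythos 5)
Parts (1)--(3) of your proposal are in line with the paper's proof. For (1) the paper argues exactly as you do: each cycle containing the top edge has two cusps and no crossings, while the bottom cycle has two cusps and writhe $-1-2l$, and $\twbf_\theta$ follows algebraically. For (3) the paper is equally brief: $G_{-\frac12}$ is trivial and twisting the two capped-off bottom strands does not change the ambient isotopy class. For (2) your parity route (summing $\tb_\theta(\gamma_i)+\rot_\theta(\gamma_i)\equiv 1 \pmod 2$ over the three cycles and plugging in the $\tb$ values from (1)) is a legitimate alternative to the paper's argument, which instead reads off the vertex signs $\sigma_\theta(v_1),\sigma_\theta(v_2)$ from the cyclic orderings of edges in the front and applies Lemma \ref{lemma:total-rot}; the paper's route gives a bit more information (it determines $\Rot_\theta$ exactly for a chosen labeling, not just mod $2$), while yours is shorter and suffices for the stated congruence.

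Part (4) is where your proposal has a genuine gap, and it is a self-inflicted one. You state the correct key principle: a destabilization along $e_i$ would raise the Thurston--Bennequin number of \emph{both} cycles through $e_i$, and an unknotted cycle with $\tb=-1$ cannot have its $\tb$ raised, by the Bennequin bound. But you then assert this only blocks destabilization along $e_1$ and $e_3$, and you leave destabilization along $e_2$ as a ``remaining case,'' proposing to handle it via ``linked twisting'' and the completeness of $(R_g,\rot_g)$ --- that sketch is vague, is not a proof, and is unnecessary. The point you missed is that $e_2$ also lies in a $\tb=-1$ cycle: with the labeling of claim (1), the two cycles through $e_2$ are $\gamma_1=e_1\cup e_2$, which has $\tb_\theta(\gamma_1)=-1$, and $\gamma_2$; so destabilizing along $e_2$ would force $\tb(\gamma_1)=0$, which is impossible for an unknot, exactly as in your other cases. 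Since every edge of $G_l$ lies in a cycle with $\tb=-1$ (namely $e_1,e_2\in\gamma_1$ and $e_3\in\gamma_3$), the one-line argument covers all three edges uniformly; this is precisely the paper's proof of (4), later formalized in Proposition \ref{prop:nondestab-criterion} as ``Property N.''
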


\begin{proof}
To prove (1), note that pairing the top edge with either of the remaining edges gives a cycle with 2 cusps and no crossings.  The cycle composed of the bottom two edges has 2 cusps and writhe $-1 - 2l$.  The edge twisting $\twbf_{\theta}$ can now be immediately calculated from $\tbbf_{\theta}$.

For (2), label the left vertex $v_1$ and right vertex $v_2$ and label the edges of $G_l$ at the left vertex in ascending order.  This implies that $\sigma_{\theta}(v_1) = 1$.  If $2l = 1 \text{ mod } 2$, then the edges are labeled in ascending order at the right vertex and so $\sigma_{\theta}(v_2) = 1$ as well.  If $2l = 0 \text{ mod }2$, then $\sigma_{\theta}(v_2) = -1$.  Thus, Lemma \ref{lemma:total-rot} implies the statement for this labeling of $G_l$.  Transposing the labels on the vertices or on a pair of vertices modifies $\Rot_{\theta}$ by multiplication by $-1$ and therefore does not change $\Rot_{\theta} \text{ mod }2$.

For (3), the initial graph $G_{-\frac{1}{2}}$ is clearly topologically trivial.  Moreover, twisting along the bottom $2$ strands does not change the ambient isotopy class.  Finally, for (4), each $G_l$ is nondestabilizeable since each edge is contained in a cycle with $\tb = -1$.
\end{proof}

\begin{lemma}
\label{lemma:realization}
For each admissible $(\tbbf,\rotbf)$, there exists a stabilization of some $G_l$ to an embedding $\theta$ such that $(\tbbf_{\theta},\rotbf_{\theta}) = (\tbbf, \rotbf)$.
\end{lemma}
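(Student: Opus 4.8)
The plan is to treat edge stabilization as an operation on the classical data $(\tbbf_\theta,\rotbf_\theta)$ and to show that every admissible vector lies in the image of this operation applied to a suitable $G_l$. First I would record the local effect of a single edge stabilization. A stabilization on $e_i$ inserts one zig-zag into an edge lying in exactly the two cycles through $e_i$, so it lowers the Thurston--Bennequin number of each of those two cycles by $1$; and since $e_i$ is traversed with opposite orientations in the two cycles, it changes their rotation numbers by $+1$ and $-1$, with the assignment of signs governed by whether the stabilization is positive or negative. In particular $\Rot_\theta$ is unchanged by any edge stabilization, so the total rotation number is an invariant of the whole process and must already be matched by the base graph $G_l$.

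Next I would choose the base. Recording the tb-data through the per-edge twisting, a stabilization on $e_i$ simply decreases $\tw(e_i)$ by $1$ and fixes $\tw(e_j)$ for $j\neq i$, so realizing a target twisting $\omega_i$ from $G_l$ requires $\tw_{G_l}(e_i)\geq\omega_i$ with integral gaps $n_i:=\tw_{G_l}(e_i)-\omega_i\geq 0$. By Lemma~\ref{lemma:Gl-basics} the twisting of $G_l$ is a permutation of $(l,-1-l,-1-l)$; placing the edge of largest target twisting on top and using that every pair $\omega_i+\omega_j=\tb(\gamma)\leq -1$ (each pair of edges bounds a cycle satisfying the unknot bound), I would show the interval of admissible $l$ is nonempty. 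The relations $\omega_i+\omega_j\in\ZZ$ force all $\omega_i$ into one coset of $\ZZ$, and a short parity computation from the admissibility conditions shows this coset is $\ZZ$ exactly when $\Rot\in\{\pm1\}$ and $\ZZ+\tfrac12$ exactly when $\Rot=0$; this matches the parity of $l$ dictated by $\Rot_{G_l}=2l+1\bmod 2$. Hence I can select $l\in\tfrac12\ZZ_{\geq -1/2}$ of the correct parity, together with a labeling and, when $\Rot=\pm1$, a mirror to fix the sign of $\Rot_{G_l}$, realizing $\tbbf$ with nonnegative integer stabilization counts $n_i$.

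Finally I would realize the rotation data by splitting the $n_i$ stabilizations on $e_i$ into $a_i^+$ positive and $a_i^-$ negative ones and setting $r_i:=a_i^+-a_i^-$. The previous step then gives $\rot(\gamma_i)=\rot_{G_l}(\gamma_i)+(r_i-r_{i+1})$, so the target is met iff $r_i-r_{i+1}=\rho_i$, where $\rho_i:=\rot(\gamma_i)-\rot_{G_l}(\gamma_i)$. Since $\sum_i\rho_i=\Rot-\Rot_{G_l}=0$, this cyclic system has a one-parameter family of solutions $r_i=s-\sum_{j<i}\rho_j$, and the problem reduces to choosing $s$ with $|r_i|\leq n_i$ and $r_i\equiv n_i\pmod 2$ for all $i$. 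The parities are automatically consistent: subtracting the relations $\tb+\rot\equiv 1\pmod 2$ for $\gamma_i$ and for the matching cycle of $G_l$ gives $\rho_i\equiv n_i+n_{i+1}\pmod 2$. The magnitude constraints are three intervals in $s$, so by one-dimensional Helly it suffices that they pairwise overlap, i.e. $|\rho_i|\leq n_i+n_{i+1}$ for each $i$. Because each cycle of $G_l$ has exactly two cusps, $\rot_{G_l}(\gamma_i)\in\{-1,0,1\}$, and for the two cycles through the top edge one has $\rot_{G_l}=0$, so these inequalities reduce to the admissibility bounds $|\rot(\gamma_i)|\leq -1-\tb(\gamma_i)$.

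The main obstacle is the remaining pairwise overlap for the single bottom cycle, whose tb-deficit comes from writhe rather than cusps: there $\rot_{G_l}(\gamma_i)=\Rot_{G_l}$ and the inequality reads $|\rot(\gamma_i)-\Rot_{G_l}|\leq (-2-2l)-\tb(\gamma_i)$, which is not implied by admissibility alone once $l$ is large. Resolving it is exactly where the freedom in $l$ and the labeling must be spent: I would take $l$ as small as the twisting inequalities allow, keeping $G_l$ as close as possible to the maximal $G_{-1/2}$, all of whose cycles have $\tb=-1$ and $\rot=0$, and assign the most rotation-heavy cycle to a top position so that the cycle which inherits its negativity from writhe is one whose rotation already equals $\Rot_{G_l}$. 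Verifying that the tb-realization constraint on $l$ and this rotation-overlap constraint can always be met at once is the crux; I expect it to follow from the fact that the two largest target twistings sum to the tb of a single cycle and are therefore jointly controlled by that cycle's admissibility bound.
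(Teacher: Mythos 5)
Your setup is sound and follows the same overall route as the paper: realize $\tbbf$ by stabilizing a $G_l$ whose edge twistings dominate the target twistings, then split the stabilizations on each edge into positive and negative ones to hit $\rotbf$. Your bookkeeping is actually cleaner than the paper's explicit stabilization counts: the observation that $\Rot$ is invariant under edge stabilization, the coset/parity analysis matching $l$ to $\Rot$, the reduction to the cyclic system $r_i - r_{i+1} = \rho_i$, and its solution under $|r_i| \le n_i$, $r_i \equiv n_i \pmod 2$ via one-dimensional Helly plus the parity of interval endpoints are all correct. You also correctly isolate the one overlap condition that admissibility does not give for free, namely the one attached to the bottom cycle of $G_l$.

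But that is exactly where your argument stops, and the fix you sketch does not work as stated, so this is a genuine gap rather than a deferred routine verification. You propose to choose the labeling so that the bottom cycle's target rotation \emph{equals} $\Rot_{G_l}$; this is not always possible: $\tbbf = (-3,-3,-4)$, $\rotbf = (2,2,-3)$ is admissible with $\Rot = 1$, its twistings $(-2,-1,-2)$ all lie below $0$ (so $l=0$ is forced), and no cycle has rotation number $1$. Nor does ``smallest $l$, largest twisting on top'' suffice: for $\tbbf = (-2,-2,-2)$, $\rotbf = (1,-1,1)$ all twistings equal $-1$, so $l = 0$ and the stabilization counts are $(1,0,0)$ for every choice of top edge, and taking $\gamma_2$ as the bottom cycle gives $|r_2 - \Rot| = 2 > n_2 + n_3 = 0$; the construction succeeds only if the bottom cycle is chosen to be $\gamma_1$ or $\gamma_3$. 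The correct selection rule is by \emph{sign}, not by equality and not by twisting size: if $\Rot = 0$, the base is $G_{-\frac{1}{2}}$, all of whose cycles have $\tb = -1$, $\rot = 0$, so all three overlap conditions are admissibility bounds; if $\Rot = \pm 1$ and some twisting $t_j \ge 0$, put $e_j$ on top with $l = t_j$, so $n_j = 0$ and the bottom condition is the triangle inequality applied to the admissibility bounds for the two top cycles; and if $\Rot = \pm 1$ and all $t_i \le -1$, take $l = 0$ and choose as bottom cycle a $\gamma_\beta$ with $\Rot \cdot r_\beta \ge 1$, which exists because $r_1 + r_2 + r_3 = \Rot \ne 0$; then $|\Rot - r_\beta| = |r_\beta| - 1 \le (-1 - \tb(\gamma_\beta)) - 1$, which is the required bound. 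So the gap is fillable by a short argument, but the needed idea --- select the bottom cycle using the fact that the rotation numbers sum to $\Rot$ --- is absent from your sketch, and your closing guess that the bound follows from the twisting inequalities points in the wrong direction: the resolution comes from the rotation data, not the twisting data. (This step is genuinely the delicate one; it is where the paper's own proof resorts to the case analysis of the quantities $a_1,b_1$, and examples like $\tbbf = (-2,-2,-2)$, $\rotbf = (1,-1,1)$ are the relevant stress tests for any argument there.)
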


\begin{proof}
To simplify notation, define $r_i = \rot(\gamma_i)$ for $i = 1,2,3$.  The vector $\tbbf$ determines a vector $\overline{\tw} = (t_1,t_2,t_3)$ as described above.  Up to cyclic relabeling of the edges, we can assume that $t_1 \geq t_2,t_3$.  For $i=1,2,3$, define
\begin{align*}
a_2 &:= \frac{1}{2} \left( -1 - \tb(\gamma_1) - r_{1} \right) & b_2 &:= \frac{1}{2} \left( -1 -\tb(\gamma_1) + r_{1} \right) \\
 &= \frac{1}{2} \left( -1 - t_1 - t_2 - r_{1} \right) &  &= \frac{1}{2} \left( -1 -t_1 - t_2 + r_{1} \right) \\
a_3 &:= \frac{1}{2} \left( -1 - \tb(\gamma_3) + r_{3} \right) & b_3 &:= \frac{1}{2} \left( -1 - \tb(\gamma_3) - r_{3} \right) \\
 &= \frac{1}{2} \left( -1 - t_1 - t_3 + r_{3} \right) &  &= \frac{1}{2} \left( -1 - t_1 - t_3 - r_{3} \right) \\
a_1 &:= \text{min}(a_2,a_3) & b_1 &:= \text{min}(b_2,b_3)
\end{align*}
Admissibility implies that $a_i,b_i$ are nonnegative integers.

First, suppose that $t_1 \geq -\frac{1}{2}$.  Set $l = t_1$ and let $G = G_l$ be a nondestabilizeable Legendrian realization of $\Theta$ with $\tw_G(e_1) = l$.  As a result, by Lemma \ref{lemma:Gl-basics}, $\tw_G(e_2) = \tw_G(e_3) = -1 - t_1$ and furthermore $\rot_G(\gamma_{1}) = \rot_G(\gamma_{3}) = 0$ since $\tb_G(\gamma_1) = \tb_G(\gamma_3) = -1$.  Apply $a_i$ positive stabilizations and $b_i$ negative stabilizations to the oriented edge $e_i$ for $i=2,3$.  A simple computation shows that the resulting graph $\theta$ satisfies $\tbbf_{\theta} = \tbbf$ and $\rotbf_{\theta} = \rotbf$ and is the required graph.

Secondly, suppose that $t_1 < -\frac{1}{2}$.  We claim that $t_1 + a_1 + b_1 \geq -\frac{1}{2}$.  There are four cases to prove this, depending on the value of $a_1,b_1$.

If $a_1 = a_2$ and $b_1 = b_2$, then
\[t_1 + a_1 + b_1 = t_1 + \frac{1}{2} \left( -2 - 2 t_1 - 2 t_2 \right) = -1 - t_2 > -\frac{1}{2}\]
since $t_i \leq t_1 < -\frac{1}{2}$.  A similar argument holds if $a_1 = a_3$ and $b_1 = b_3$.

If $a_1 = a_2$ and $b_1 = b_3$, then

\begin{align*}
2(a_3 - a_2) &= t_2 - t_3 + r_1 + r_3 \geq 0 \\
2(b_2 - b_3) &= t_3 - t_2 + r_1 + r_3  \geq 0
\end{align*}
which implies that $r_1 + r_3 = t_2 - t_3 = 0$. So
\[t_1 + a_1 + b_1 = \frac{1}{2}\left (-2 -t_2 - t_3 \right)\]
In addition, $t_2 = t_2$ and so $t_1 + a_1 + b_1 > 0$.  A similar argument holds if $a_1 = a_3$ and $b_1 = b_2$.

Now, set $l = t_1 + a_1 + b_1$ and let $G = G_l$ such that $\tw_G(e_1) = l$.  Apply $a_1$ positive and $b_1$ negative stabilizations to $e_1$ and then apply $a_i - a_1$ positive and $b_i - b_1$ negative stabilizations to $e_i$ for $i=2,3$.  Again, a simple computation shows that the resulting graph $\theta$ satisfies $\tbbf_{\theta} = \tbbf$ and $\rotbf_{\theta} = \rotbf$ and is the required graph.
\end{proof}

\begin{lemma}
\label{lemma:realization-rot0}
Let $(\tbbf,\rotbf)$ be admissible.  If $\Rot = 0$, then there exists two inequivalent Legendrian embeddings $\theta_+,\theta_-$ satisfying $\tbbf = \tbbf_{\theta_+} = \tbbf_{\theta_-}$ and $\rotbf = \rotbf_{\theta_+} = \rotbf_{\theta_-}$.
\end{lemma}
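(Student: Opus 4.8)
The plan is to realize the pair $(\tbbf,\rotbf)$ by two embeddings whose Legendrian ribbons carry opposite coorientations, and then to separate them using Theorem~\ref{thrm:complete}. The organizing observation is that, by Lemma~\ref{lemma:tb-determines-contact}, the value of $\tbbf$ already determines the underlying unoriented ribbon $\overline{R}$; the only remaining freedom in the oriented ribbon is its coorientation, which is recorded locally by the vertex signs $\sigma(v_1),\sigma(v_2)$. Since $\Rot=0$, Lemma~\ref{lemma:total-rot} forces $\sigma(v_1)=\sigma(v_2)$, and the two sign patterns $(+,+)$ and $(-,-)$ are exactly the two coorientations.

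First I would build $\theta_+$ by applying Lemma~\ref{lemma:realization} to $(\tbbf,\rotbf)$. The construction there begins with a graph $G_l$ whose left vertex $v_1$ is labeled with its edges in ascending cyclic order, so $\sigma(v_1)=+1$, and the subsequent edge stabilizations are performed in the interiors of edges and hence do not change the cyclic order of edges at either vertex. Thus $\sigma_{\theta_+}(v_1)=+1$, and combined with $\Rot=0$ and Lemma~\ref{lemma:total-rot} this gives $\sigma_{\theta_+}(v_1)=\sigma_{\theta_+}(v_2)=+1$.

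Next I would produce $\theta_-$ with the opposite coorientation but the same classical invariants. The pair $(\tbbf,-\rotbf)$ is again admissible, since the inequality $\tb(\gamma_i)+|\rot(\gamma_i)|\le-1$ and the parity condition $\tb(\gamma_i)+\rot(\gamma_i)\equiv 1 \pmod 2$ are unchanged under $\rotbf\mapsto-\rotbf$, while $\Rot$ merely changes sign and stays $0$. Applying Lemma~\ref{lemma:realization} to $(\tbbf,-\rotbf)$ yields an embedding $\theta'$ which, by the identical reasoning, has $\sigma_{\theta'}(v_1)=\sigma_{\theta'}(v_2)=+1$. Set $\theta_-:=M(\theta')$. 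By the properties of the mirror established above, $\theta_-$ is topologically trivial with $\tbbf_{\theta_-}=\tbbf_{\theta'}=\tbbf$ and $\rotbf_{\theta_-}=-\rotbf_{\theta'}=-(-\rotbf)=\rotbf$; moreover the mirror reverses the coorientation of the contact planes, hence of the ribbon, so $\sigma_{\theta_-}(v_1)=\sigma_{\theta_-}(v_2)=-1$.

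It remains to distinguish the two embeddings. Both realize $(\tbbf,\rotbf)$, so by Lemma~\ref{lemma:tb-determines-contact} they share the unoriented ribbon $\overline{R}$; but their vertex signs differ, so their Legendrian ribbons $R_{\theta_+}$ and $R_{\theta_-}$ carry opposite coorientations and are distinct as cooriented surfaces. By Theorem~\ref{thrm:complete}, $\theta_+$ and $\theta_-$ are not Legendrian isotopic. (Equivalently, the vertex sign $\sigma(v_1)$ is itself a Legendrian isotopy invariant, being discrete and varying continuously along any isotopy, and it differs on the two embeddings.) The step I expect to be the main obstacle is arranging the second realization to have \emph{exactly} the same $\rotbf$ as the first while carrying the opposite coorientation: mirroring $\theta_+$ directly would negate $\rotbf$, and one cannot correct this by edge stabilizations without also disturbing $\tbbf$. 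Realizing the negated pair $(\tbbf,-\rotbf)$ first and mirroring afterward is the device that circumvents this, leaving only the routine checks on the admissibility of $(\tbbf,-\rotbf)$ and on the behavior of the vertex signs.
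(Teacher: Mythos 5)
Your proof is correct and is essentially the paper's own argument: the paper likewise builds $\theta_+$ from Lemma~\ref{lemma:realization}, constructs an auxiliary graph $h$ realizing $(\tbbf,-\rotbf)$ (by swapping the positive and negative stabilizations used to build $\theta_+$ --- which is exactly what your second invocation of Lemma~\ref{lemma:realization} on the admissible pair $(\tbbf,-\rotbf)$ produces), sets $\theta_- = M(h)$, and distinguishes $\theta_+$ from $\theta_-$ by the coorientation of their Legendrian ribbons via Theorem~\ref{thrm:complete}. Your parenthetical remark that the vertex sign $\sigma(v_1)$ is itself a discrete Legendrian isotopy invariant just makes explicit the distinguishing step that the paper phrases in terms of the ribbons' coorientations.
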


\begin{proof}
By Lemma \ref{lemma:realization}, there exists some Legendrian embedding $\theta_+$ with $\tbbf_{\theta_+} = \tbbf$ and $\rotbf_{\theta_+} = \rotbf$. It is obtained from some $G_l$ by performing $p_i$ positive and $n_i$ negative stabilizations along the edge $e_i$ for $i = 1,2,3$.  See Figure \ref{fig:theta+-}.  Let $h$ be the graph obtained by instead performing $n_i$ positive and $p_i$ negative stabilizations along the edge $e_i$ for $i=1,2,3$.  Consequently, $\tbbf_{\theta_+} = \tbbf_h$ and since $\Rot_{\theta_+} = 0$, this also implies that $\rotbf_h = -\rotbf_{\theta_+}$.  Let $\theta_- = M(h)$ be the mirror of $h$.  Mirroring preserves $\tbbf$ and switches the sign of the rotation number of each cycle.  Thus $\tbbf_{\theta_-} = \tbbf_{\theta_+}$ and $\rotbf_{\theta_-} = \rotbf_{\theta_+}$.

We can perform these stabilizations away from the vertices, so we can assume that $R_{\theta_+}$ and $R_h$ agree near the vertices and thus have the same coorientation.  In addition, since $\tbbf_{\theta_+} = \tbbf_h$ and both $\theta_+$ and $h$ are topologically trivial, there is a smooth isotopy of $\theta_+$ to $h$ taking $R_{\theta_+}$ to $R_h$.  The ribbon $R_{\theta_-}$ is smoothly isotopic to $R_h$ but has the opposite coorientation.  Thus, $\theta_+$ and $\theta_-$ are distinguished by their Legendrian ribbons.
\end{proof}

% -----------Figure stabilizations -----------------

\begin{figure}[htpb!]
\begin{center}
\begin{picture}(400, 200)
\put(0,0){\includegraphics[width=5.5in]{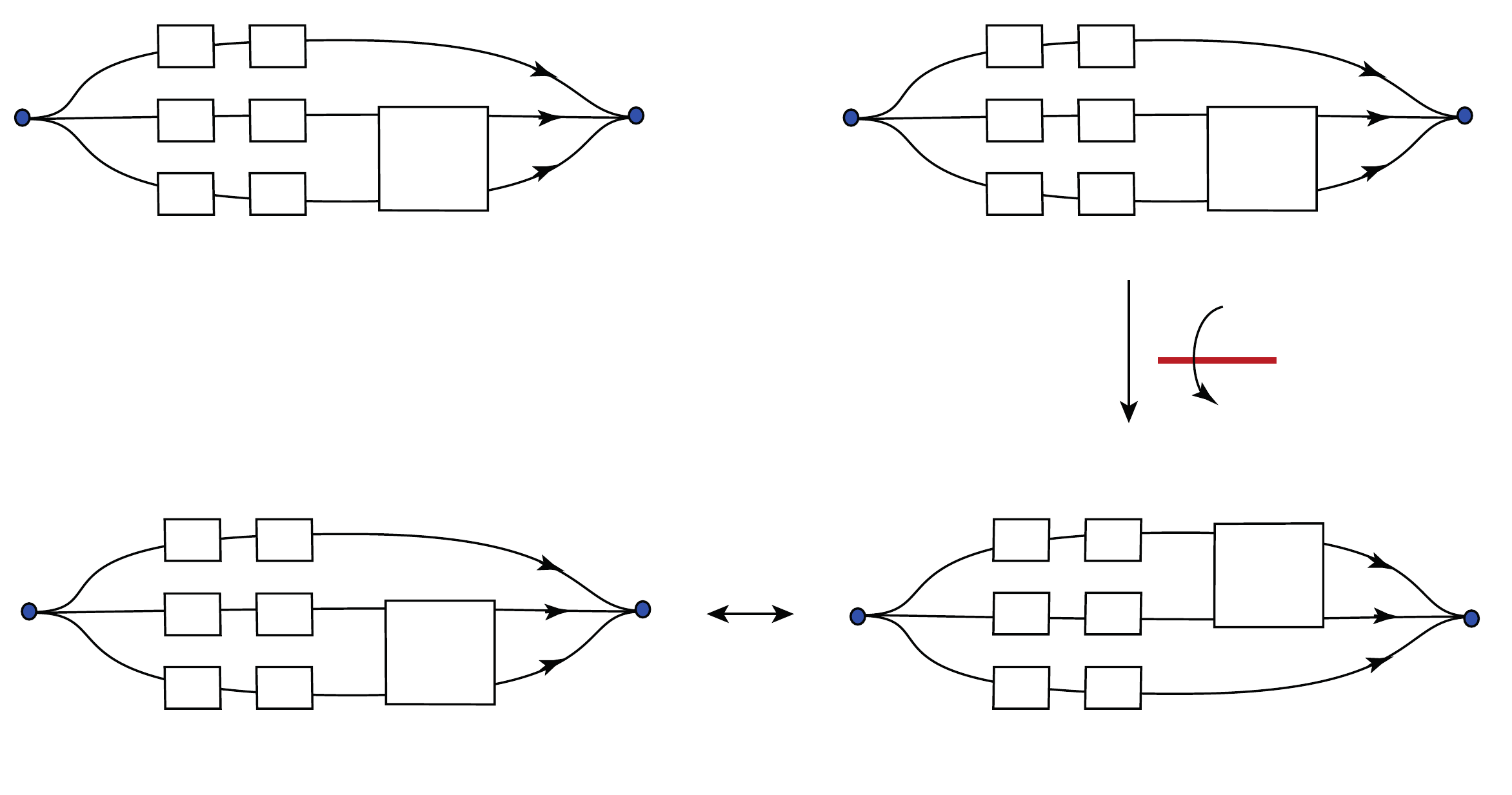}}
\put(45,195){\small $p_1$}
\put(45,175){\small $p_2$}
\put(45,157){\small $p_3$}
\put(68,195){\small $n_1$}
\put(68,175){\small $n_2$}
\put(68,156){\small $n_3$}
\put(105,166){\small $l+\frac{1}{2}$}
\put(80,138){\large $\theta_+$}

\put(264,195){\small $n_1$}
\put(264,175){\small $n_2$}
\put(264,156){\small $n_3$}
\put(287,195){\small $p_1$}
\put(287,175){\small $p_2$}
\put(287,157){\small $p_3$}
\put(322,166){\small $l+\frac{1}{2}$}
\put(300,142){\large $h$}

\put(46,65){\small $p_1$}
\put(45,26){\small $p_2$}
\put(45,45){\small $p_3$}
\put(70,65){\small $n_1$}
\put(70,26){\small $n_2$}
\put(70,45){\small $n_3$}
\put(105,36){\small $l+\frac{1}{2}$}
\put(290,6){\large $M(h)$}

\put(264,65){\small $p_3$}
\put(264,26){\small $p_1$}
\put(264,45){\small $p_2$}
\put(288,65){\small $n_3$}
\put(288,26){\small $n_1$}
\put(288,45){\small $n_2$}
\put(325,56){\small $l+\frac{1}{2}$}
\put(80,6){\large $\theta_-$}
\end{picture}
\caption{The graph $\theta_+$ (top left) is obtained from $G_l$ by applying $p_i$ positive and $n_i$ negative stabilizations along $e_i$.  Our convention is that the left box denote positive stabilizations and the right box denotes negative stabilizations.  See Figure \ref{fig:StabEdge}. The graph $h$ (top right) is obtained from $G_l$ by instead applying $n_i$ positive and $p_i$ negative stabilizations.  The mirror of $h$ (bottom right) is obtained by rotation around the $x$-axis.  Note that a positive zigzag in $h$ becomes a negative zigzag in $M(h)$.  These commute so we can move the $n_i$ positive stabilizations to the left by a Legendrian isotopy.  By a Legendrian isotopy, we can move the bottom strand of $M(h)$ to the top and obtain $\theta_-$ (bottom left).  If $\Rot_{\theta_+} = 0$, then $\theta_+$ and $\theta_-$ have the same invariants $\tbbf,\rotbf$.}\label{fig:theta+-}
\end{center}
\end{figure}
%%%%%%%%%%%%%%%%%%%%%%%%%

% -----------Figure stabilizations -----------------

\begin{figure}[htpb!]
\begin{center}
\begin{picture}(350, 60)
\put(0,0){\includegraphics[width=4.5in]{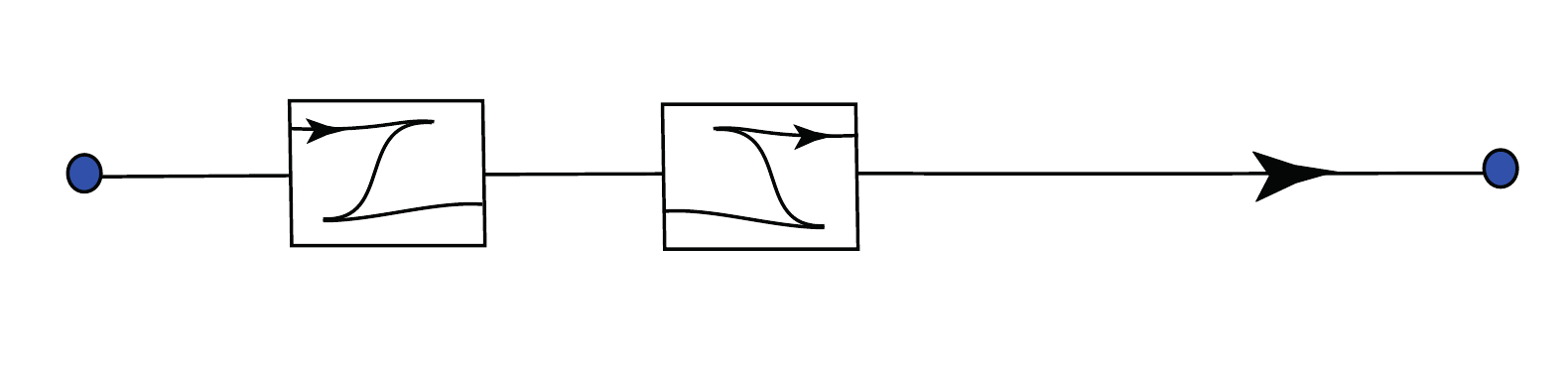}}
\put(65,16){\small positive}
\put(55,6){\small stabilizations}

\put(142,16){\small negative}
\put(135,6){\small stabilizations}
\end{picture}
\caption{In Figure \ref{fig:theta+-}, our convention is to let the left box denote positive stabilizations and the right box denote negative stabilizations.  }\label{fig:StabEdge}
\end{center}
\end{figure}
%%%%%%%%%%%%%%%%%%%%%%%%%

\begin{lemma}
\label{lemma:realization-rot1}
Let $\theta_1,\theta_2: \Theta \rightarrow (S^3,\xi_{std})$ be topologically trivial with the same $\tbbf,\rotbf$ and such that $\Rot_{\theta_i} = \pm 1$.  Then $\theta_1$ and $\theta_2$ are Legendrian isotopic.
\end{lemma}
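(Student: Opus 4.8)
The plan is to apply the completeness theorem (Theorem~\ref{thrm:complete}): since $\rotbf_{\theta_1} = \rotbf_{\theta_2}$ is given, it suffices to show that the Legendrian ribbons $R_{\theta_1}$ and $R_{\theta_2}$ are isotopic as cooriented surfaces. First I would invoke Lemma~\ref{lemma:tb-determines-contact}: because $\tbbf_{\theta_1} = \tbbf_{\theta_2}$, the underlying unoriented contact framings $\overline{R}_{\theta_1}$ and $\overline{R}_{\theta_2}$ are isotopic. The ribbon of a $\Theta$-graph is connected and orientable, so once the unoriented framings are identified the only remaining ambiguity is a single global coorientation: $R_{\theta_1}$ and $R_{\theta_2}$ either carry the same coorientation, in which case they are isotopic as Legendrian ribbons and we are done, or they carry opposite coorientations.

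The heart of the argument is to rule out the opposite-coorientation case using the hypothesis $\Rot = \pm 1$. The key observation I would establish is that, given the unoriented framing, the coorientation of $R_\theta$ is equivalent data to the pair of vertex signs $(\sigma_\theta(v_1), \sigma_\theta(v_2))$. Near each trivalent vertex the ribbon is a disk tangent to $\xi_{\theta(v)}$, and condition (2) in the definition of a Legendrian ribbon (no negative tangencies) pins the coorientation of this disk to that of $\xi$; the orientation of the disk then induces a cyclic order on the three edges, which is precisely the one measured by $\sigma_\theta(v)$. Reversing the global coorientation reverses this induced cyclic order at both vertices simultaneously, so it sends $(\sigma_\theta(v_1), \sigma_\theta(v_2))$ to $(-\sigma_\theta(v_1), -\sigma_\theta(v_2))$.

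Now I would combine this with Lemma~\ref{lemma:total-rot}, which gives $\Rot_\theta = \tfrac12(\sigma_\theta(v_1) - \sigma_\theta(v_2))$. Since each sign is $\pm 1$ and $\Rot_{\theta_i} = \pm 1$ with the same value for $i = 1,2$ (as $\rotbf_{\theta_1} = \rotbf_{\theta_2}$ forces $\Rot_{\theta_1} = \Rot_{\theta_2}$), the vertex signs are forced: $(\sigma(v_1), \sigma(v_2)) = (+1,-1)$ when $\Rot = +1$ and $(-1,+1)$ when $\Rot = -1$. In either case the antipodal pair corresponds to $\Rot$ of the opposite sign, so the opposite-coorientation case is incompatible with the shared value of $\Rot$. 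Hence $\sigma_{\theta_1}(v_j) = \sigma_{\theta_2}(v_j)$ for $j = 1,2$, the coorientations agree, and $R_{\theta_1} \cong R_{\theta_2}$ as Legendrian ribbons. Together with $\rotbf_{\theta_1} = \rotbf_{\theta_2}$, Theorem~\ref{thrm:complete} then yields that $\theta_1$ and $\theta_2$ are Legendrian isotopic. By contrast, when $\Rot = 0$ the two sign pairs $(+1,+1)$ and $(-1,-1)$ are antipodal and both compatible, which is exactly the source of the two distinct embeddings produced in Lemma~\ref{lemma:realization-rot0}.

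The main obstacle I anticipate is making the correspondence between ribbon coorientation and vertex signs fully rigorous: in particular, analyzing the local model at a trivalent vertex so that the sign conventions in condition (2) and in the definition of $\sigma_\theta$ line up, and confirming that the isotopy of unoriented framings supplied by Lemma~\ref{lemma:tb-determines-contact} can be taken to respect the edge and vertex labels, so that the comparison of vertex signs between $\theta_1$ and $\theta_2$ is meaningful.
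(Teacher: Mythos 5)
Your proposal is correct and follows essentially the same route as the paper's proof: reduce to the coorientation of the contact framing via Lemma~\ref{lemma:tb-determines-contact} and Theorem~\ref{thrm:complete}, then use Lemma~\ref{lemma:total-rot} to show that reversing the coorientation would negate $\Rot$, contradicting $\Rot_{\theta_1} = \Rot_{\theta_2} = \pm 1$. The extra detail you give on the local vertex model and sign conventions is a more careful elaboration of a step the paper takes for granted, not a different argument.
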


\begin{proof}
If $\tbbf_{\theta_1} = \tbbf_{\theta_2}$, then by Lemma \ref{lemma:tb-determines-contact} the contact framings $\overline{R}_{\theta_1}$ and $\overline{R}_{\theta_2}$ agree.  Since $\rotbf_{\theta_1} = \rotbf_{\theta_2}$, Theorem \ref{thrm:complete} implies that only the coorientation on the contact framing distinguishes the two embeddings.

By Lemma \ref{lemma:total-rot}
\[\Rot_{\theta_1} = \frac{1}{2}\left(\sigma_{\theta_1}(v_1) - \sigma_{\theta_1}(v_2) \right) = \pm 1\]
However, if $R_{\theta_1}$ and $R_{\theta_2}$ have opposite coorientations, then $\sigma_{\theta_2}(v) = -\sigma_{\theta_1}(v)$ for each vertex and so
\[\Rot_{\theta_2} = -\frac{1}{2}\left(\sigma_{\theta_1}(v_1) - \sigma_{\theta_1}(v_2) \right) = \mp 1\]
which is a contradiction.  Thus $R_{\theta_1}$ and $R_{\theta_2}$ must have the same coorientation and therefore Theorem \ref{thrm:complete} implies that $\theta_1,\theta_2$ are isotopic.
\end{proof}

\begin{proposition}
\label{prop:GL-stab}
Every topologically trivial Legendrian embedding $\theta: \Theta \rightarrow (S^3,\xi_{std})$ is a stabilization of some $G_l$.
\end{proposition}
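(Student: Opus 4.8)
The plan is to treat this as a wrap-up of the realization and uniqueness lemmas, using the completeness Theorem \ref{thrm:complete} to match an arbitrary $\theta$ against the explicit stabilized models produced earlier. First I would observe that for any topologically trivial $\theta$ the pair $(\tbbf_\theta,\rotbf_\theta)$ is admissible: each cycle $\gamma_i$ is a topological unknot, so it satisfies the two unknot inequalities, while Lemma \ref{lemma:total-rot} forces $\Rot_\theta \in \{-1,0,1\}$. Applying Lemma \ref{lemma:realization} then yields a specific embedding $\theta'$, obtained by stabilizing some $G_l$, with $(\tbbf_{\theta'},\rotbf_{\theta'}) = (\tbbf_\theta,\rotbf_\theta)$. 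Since $\theta'$ is by construction a stabilization of $G_l$, it suffices to show that $\theta$ is Legendrian isotopic to $\theta'$, or to a companion model sharing the same invariants, and the argument splits on the value of $\Rot_\theta$.

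In the case $\Rot_\theta = \pm 1$, the total rotation number is the sum of the entries of $\rotbf$, so $\Rot_{\theta'} = \Rot_\theta = \pm 1$ as well. Both $\theta$ and $\theta'$ are then topologically trivial with equal $\tbbf,\rotbf$ and total rotation $\pm 1$, so Lemma \ref{lemma:realization-rot1} immediately gives $\theta \simeq \theta'$. As $\theta'$ is a stabilization of $G_l$, so is $\theta$, and this case is complete.

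In the case $\Rot_\theta = 0$ there is genuine ambiguity, and this is where the real work lies. Here Lemma \ref{lemma:realization-rot0} produces two embeddings $\theta_+,\theta_-$ sharing the invariants $(\tbbf,\rotbf)$ but distinguished precisely by the coorientation of their Legendrian ribbons; $\theta_+$ is a stabilization of $G_l$ directly from Lemma \ref{lemma:realization}, and $\theta_- = M(h)$ is seen to be a stabilization of $G_l$ as well by the mirror-and-reposition argument recorded in Figure \ref{fig:theta+-}. To place $\theta$ among these, I would argue that the only remaining invariant is the binary coorientation: by Lemma \ref{lemma:tb-determines-contact} the equality $\tbbf_\theta = \tbbf_{\theta_\pm}$ forces the unoriented contact framings $\overline{R}_\theta$ and $\overline{R}_{\theta_\pm}$ to be isotopic, and $\rotbf_\theta = \rotbf_{\theta_\pm}$. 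Since $\theta_+$ and $\theta_-$ realize the two opposite coorientations of this common unoriented surface, the coorientation of $R_\theta$ must agree with exactly one of them; Theorem \ref{thrm:complete}, applied to the complete invariant $(R_\theta,\rot_\theta)$, then yields $\theta \simeq \theta_+$ or $\theta \simeq \theta_-$. Either way $\theta$ is a stabilization of $G_l$.

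The main obstacle I expect is the $\Rot_\theta = 0$ case, and specifically two points within it: confirming that the two coorientations of the common unoriented ribbon are both realized by stabilizations of the \emph{same} $G_l$ (which is exactly what the mirror construction of Lemma \ref{lemma:realization-rot0} and Figure \ref{fig:theta+-} provide), and arguing cleanly that nothing beyond this $\ZZ/2$ coorientation can distinguish $\theta$ from the models. The latter is the heart of the matter and rests on combining Lemma \ref{lemma:tb-determines-contact}, which reduces the oriented-surface data to a single sign once $\tbbf$ is fixed, with the completeness statement of Theorem \ref{thrm:complete}. The $\Rot_\theta = \pm 1$ case, by contrast, is essentially immediate once Lemma \ref{lemma:realization-rot1} is in hand.
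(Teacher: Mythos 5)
Your proposal is correct and follows essentially the same route as the paper: split on $\Rot_\theta$, handle $\Rot_\theta = \pm 1$ via Lemma \ref{lemma:realization-rot1} applied to the model from Lemma \ref{lemma:realization}, and handle $\Rot_\theta = 0$ by combining Lemma \ref{lemma:tb-determines-contact} with Theorem \ref{thrm:complete} to see that the two embeddings of Lemma \ref{lemma:realization-rot0} (both stabilizations of some $G_l$) exhaust all realizations of the given $(\tbbf,\rotbf)$. The only difference is that you spell out the admissibility check and the coorientation dichotomy explicitly, which the paper leaves implicit.
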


\begin{proof}
Let $\theta$ be such a graph.  Note that Theorem \ref{thrm:complete} and Lemma \ref{lemma:tb-determines-contact} together imply that there are at most 2 Legendrian graphs with the same $\tbbf,\rotbf$.  If $\Rot_{\theta} = 0$, then $\theta$ must be one of the two constructed in Lemma \ref{lemma:realization-rot0}.  If $\Rot_{\theta} = \pm 1$, then Lemma \ref{lemma:realization-rot1} implies $\theta$ must be Legendrian isotopic to the one constructed in Lemma \ref{lemma:realization}.
\end{proof}

\begin{corollary}
The set of nondestabilizeable, topologically trivial Legendrian $\Theta$-graphs is precisely $\{G_l\}_{l \geq -\frac{1}{2}}$.
\end{corollary}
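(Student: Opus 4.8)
The plan is to prove the two set inclusions separately, drawing almost entirely on results already established in this section; the corollary essentially repackages Lemma \ref{lemma:Gl-basics} and Proposition \ref{prop:GL-stab} into a single statement, so the work is mostly organizational rather than technical.

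First I would verify that every $G_l$ belongs to the set of nondestabilizeable, topologically trivial embeddings. This is immediate from Lemma \ref{lemma:Gl-basics}: part (3) asserts that each $G_l$ is topologically trivial, and part (4) asserts that each $G_l$ is nondestabilizeable. Hence $\{G_l\}_{l \geq -\frac{1}{2}}$ is contained in the set in question.

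For the reverse inclusion, let $\theta$ be an arbitrary nondestabilizeable, topologically trivial Legendrian $\Theta$-graph. By Proposition \ref{prop:GL-stab}, $\theta$ is obtained from some $G_l$ by a finite sequence of edge stabilizations. Suppose, toward a contradiction, that this sequence is nonempty. Then the final stabilization in the sequence can be reversed, exhibiting an explicit destabilization of $\theta$ and contradicting the hypothesis that $\theta$ is nondestabilizeable. Therefore the sequence is empty and $\theta$ is Legendrian isotopic to $G_l$. This gives the reverse inclusion, and the two inclusions together yield the claimed equality.

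The only step needing a moment's care—and the nearest thing to an obstacle—is the claim that a nonempty stabilization sequence always furnishes a genuine destabilization. Since each edge stabilization adds a single zig-zag along an edge, the inverse move removes that zig-zag and is itself a destabilization in the sense defined in Section \ref{sec:background}; this justifies the contradiction step above. I would also remark that the indexing by $l$ is faithful, i.e.\ the $G_l$ are pairwise non-Legendrian isotopic by Theorem \ref{thrm:theta-nondestab}, so that the displayed set is genuinely an infinite family rather than a collapsed one.
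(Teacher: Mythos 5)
Your proof is correct and matches the paper's intended argument: the corollary is stated in the paper as an immediate consequence of Lemma \ref{lemma:Gl-basics} (parts (3) and (4)) together with Proposition \ref{prop:GL-stab}, which is exactly the decomposition you use. Your explicit handling of the contradiction step (a nonempty stabilization sequence yields a destabilization) fills in the one detail the paper leaves implicit, and does so correctly.
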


Theorem \ref{thrm:complete} combined with Lemmas \ref{lemma:realization-rot0} and \ref{lemma:realization-rot1} classify topologically trivial Legendrian embeddings of $\Theta$.  We now address the classification of the images of these embeddings.  

Relabeling the edges and vertices of the image $\theta(\Theta)$ corresponds to replacing $\theta$ with $\theta \circ \phi$ for some $\phi \in \text{Aut}(\Theta)$.  The automorphisms of $\Theta$ are permutations of the vertices and edges and so the automorphism group is $S_2 \times S_3$.  Under $\phi$, the image or orientation of a cycle may change and so the invariants $\tbbf_{\theta},\rotbf_{\theta}$ change as well after relabeling.

Let $\phi_v$ denote the automorphism that fixes the edges and swaps the vertices and let $\phi_i$ denote the automorphism that fixes the vertices and transposes the edges $e_i,e_{i+1}$.  The four automorphisms $\{\phi_v,\phi_1,\phi_2,\phi_3\}$ generate $\text{Aut}(\Theta)$.  The automorphism $\phi_v$ acts by reversing the orientation on cycles
\[\phi_v(\gamma_j) = - \gamma_j \text{ for }j=1,2,3\]
and for $i=1,2,3$, the automorphism $\phi_i$ acts on the set of oriented cycles by
\begin{align*}
\phi_i(\gamma_i) &= - \gamma_i & \phi_i(\gamma_{i \pm 1}) & = - \gamma_{i \mp 1}
\end{align*}

To describe this effect on $\tbbf,\rotbf$, we describe the induced representation $\rho$ of $\text{Aut}(\Theta)$ on $\ZZ^6$.  This representation is defined so that for every $\phi \in \text{Aut}(\Theta)$, the classical invariants satisfy
\[(\tbbf_{\theta \circ \phi},\rotbf_{\theta \circ \phi}) = \rho(\phi) \cdot (\tbbf_{\theta},\rotbf_{\theta})\]

Let $I_k$ denote the trivial representation of $S_k$ on $\ZZ^3$, i.e. $\rho(\tau)$ is the identity for all $\tau$.  Let $\text{Sgn}_k$ denote the sign representation of $S_k$ on $\ZZ^3$, i.e. $\rho(\tau)$ is multiplication by the sign of the permutation $\tau$.  Let $p_{\zeta}$ denote the shifted permutation representation of $S_3$ on $\ZZ^3$, i.e. if $\tau_{i}$ is an elementary transposition in $S_3$ swapping $i$ and $i+1$, then $p_{\zeta}(\tau_{i})$ transposes the coordinates $z_{i+1},z_{i+2}$.  Define representations $\rho_2,\rho_3$ of $S_2,S_3$, respectively, on $\ZZ^6$ by
\begin{align*}
\rho_2 &:= I_2 \oplus \text{Sgn}_2 \\
\rho_3 &:= (I_3 \oplus \text{Sgn}_3) \cdot (p_{\zeta} \oplus p_{\zeta})
\end{align*}
and extend to a representation $\rho = \rho_2 \times \rho_3$ of $S_2 \times S_3$.  Identifying $\text{Aut}(\Theta)$ with $S_2 \times S_3$ and viewing $(\tbbf,\rotbf) = (\tb_1,\tb_2,\tb_3,\rot_1,\rot_2,\rot_3)$ as a vector in $\ZZ^6$, we see that
\begin{align*}
\rho(\phi_v) \cdot (\tb_1,\tb_2,\tb_3,\rot_1,\rot_2,\rot_3) &= (\tb_1,\tb_2,\tb_3,-\rot_1,-\rot_2,-\rot_3) \\
\rho(\phi_1) \cdot (\tb_1,\tb_2,\tb_3,\rot_1,\rot_2,\rot_3) &= (\tb_1,\tb_3,\tb_2,-\rot_1,-\rot_3,-\rot_2) \\
\rho(\phi_2) \cdot (\tb_1,\tb_2,\tb_3,\rot_1,\rot_2,\rot_3) &= (\tb_3,\tb_2,\tb_1,-\rot_3,-\rot_2,-\rot_1) \\
\rho(\phi_3) \cdot (\tb_1,\tb_2,\tb_3,\rot_1,\rot_2,\rot_3) &= (\tb_2,\tb_1,\tb_3,-\rot_2,-\rot_1,-\rot_3)
\end{align*}
From the above discussion, it follows that this is the correct effect of the classical invariants.

\begin{proposition}
Suppose that $\theta,\theta'$ are topologically trivial Legendrian $\Theta$-graphs such that $$\rho(\phi) \cdot (\tbbf_{\theta},\rotbf_{\theta}) = (\tbbf_{\theta'},\rotbf_{\theta'})$$ for some $ \phi \in \text{Aut}(\Theta)$.  Then
\begin{enumerate}
\item If $\Rot_{\theta}, \Rot_{\theta'} \neq 0$ then $\theta,\theta'$ are Legendrian isotopic up to relabeling.
\item If $\Rot_{\theta},\Rot_{\theta'} = 0$, then $\theta,\theta'$ are Legendrian isotopic up to relabeling if and only if either
\begin{enumerate}
\item $\sigma_{\theta \circ \phi}(v_1) = \sigma_{\theta'}(v_1)$, or
\item there is a transposition $\tau \in S_3$ such that $\rho_3(\tau)$ fixes $(\tbbf_{\theta'},\rotbf_{\theta'})$.
\end{enumerate}
\end{enumerate}
\end{proposition}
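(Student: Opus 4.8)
The plan is to reduce the whole statement to a comparison of the two embeddings $\theta \circ \phi$ and $\theta'$, which by the defining property of $\rho$ and the hypothesis $\rho(\phi)\cdot(\tbbf_{\theta},\rotbf_{\theta}) = (\tbbf_{\theta'},\rotbf_{\theta'})$ have identical classical invariants $(\tbbf,\rotbf)$. First I would record that each generator $\phi_v,\phi_1,\phi_2,\phi_3$ sends $\Rot \mapsto -\Rot$, so $\Rot_{\theta\circ\phi}=\Rot_{\theta'}$ and $|\Rot_{\theta}| = |\Rot_{\theta'}|$; hence the dichotomy $\Rot \neq 0$ versus $\Rot = 0$ is preserved by the hypothesis and the two cases are exhaustive. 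Throughout I will use that ``Legendrian isotopic up to relabeling'' means $\theta \circ \psi$ is Legendrian isotopic to $\theta'$ for some $\psi \in \Aut(\Theta)$, and that any such $\psi$ must already match invariants, i.e. $\psi \in \phi\cdot\mathrm{Stab}(\tbbf_{\theta},\rotbf_{\theta})$.

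For part (1), when $\Rot_{\theta},\Rot_{\theta'} \neq 0$ admissibility forces both to equal $\pm 1$, and since they agree we have $\Rot_{\theta\circ\phi} = \Rot_{\theta'} = \pm 1$. As $\theta\circ\phi$ and $\theta'$ are topologically trivial with the same $\tbbf,\rotbf$ and $\Rot = \pm 1$, Lemma \ref{lemma:realization-rot1} applies verbatim to give that $\theta\circ\phi$ and $\theta'$ are Legendrian isotopic. Taking $\psi = \phi$ then yields the conclusion, with no choice left to make.

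For part (2), suppose $\Rot = 0$. I would first invoke Theorem \ref{thrm:complete} together with Lemmas \ref{lemma:tb-determines-contact}, \ref{lemma:total-rot}, \ref{lemma:realization-rot0} and Proposition \ref{prop:GL-stab}: for a fixed admissible $(\tbbf,\rotbf)$ with $\Rot = 0$ there are exactly two topologically trivial embeddings up to Legendrian isotopy, distinguished by the coorientation of their Legendrian ribbon, and (since $\Rot=0$ forces $\sigma(v_1)=\sigma(v_2)$ by Lemma \ref{lemma:total-rot}) detected by the common vertex sign $\sigma(v_1)$. The two sufficiency directions are then short. If $\sigma_{\theta\circ\phi}(v_1)=\sigma_{\theta'}(v_1)$, condition (a), the embeddings $\theta\circ\phi$ and $\theta'$ lie in the same one of these two classes and are Legendrian isotopic by Theorem \ref{thrm:complete}. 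If instead some transposition $\tau$ satisfies $\rho_3(\tau)\cdot(\tbbf_{\theta'},\rotbf_{\theta'})=(\tbbf_{\theta'},\rotbf_{\theta'})$, condition (b), then $\theta'$ and $\theta'\circ\tau$ share all of $\tbbf,\rotbf$, yet because $\tau$ is an odd edge permutation it reverses the cyclic order of the edges at each vertex, so $\sigma_{\theta'\circ\tau}(v_1) = -\sigma_{\theta'}(v_1)$. Thus $\theta'$ and $\theta'\circ\tau$ realize the two distinct coorientations, so the relabeling $\tau$ identifies them and whichever of the two classes $\theta\circ\phi$ occupies, it has the same image as $\theta'$.

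The substance is the converse in part (2), and this is where I expect the main obstacle. Assume $\theta\circ\psi$ is Legendrian isotopic to $\theta'$; matching invariants gives $\psi\in\phi\cdot\mathrm{Stab}(\tbbf_{\theta},\rotbf_{\theta})$ and matching ribbon coorientations gives an equality of the distinguishing signs. Comparing $\psi$ with $\phi$, the clean case is when their actions on the coorientation agree, which yields condition (a); otherwise $\psi\phi^{-1}$ is a stabilizing relabeling that \emph{reverses} the distinguishing coorientation. The delicate point is that the coorientation is reversed precisely by those relabelings that reverse the orientations of all three cycles $\gamma_i$, and a careful bookkeeping (using $\phi_v(\gamma_j)=-\gamma_j$ and $\phi_i(\gamma_i)=-\gamma_i$, $\phi_i(\gamma_{i\pm1})=-\gamma_{i\mp1}$) shows this occurs exactly when $\psi\phi^{-1}$ has a vertex part and an edge part of opposite parity; a pure vertex swap and a pure edge transposition each reverse all cycles, while their product does not. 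The crux is therefore to show that a coorientation-reversing element of $\mathrm{Stab}(\tbbf_{\theta'},\rotbf_{\theta'})$ forces a stabilizing element of the special form $\rho_3(\tau)$, $\tau$ a transposition, rather than merely an element of $S_2\times S_3$ mixing a vertex swap with an odd edge permutation. I would isolate this as a lemma on the $\rho$-stabilizers of admissible vectors, proving it by exploiting the explicit form of $\rho_2 = I_2\oplus\mathrm{Sgn}_2$ and $\rho_3 = (I_3\oplus\mathrm{Sgn}_3)(p_\zeta\oplus p_\zeta)$ together with the parity constraint $\tb(\gamma_i)+\rot(\gamma_i)\equiv 1 \pmod 2$ and $\Rot=0$; the remaining matching of signs to conditions (a) and (b) is then routine from the computations above.
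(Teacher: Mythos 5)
Your handling of part (1) and of both sufficiency directions in part (2) is correct and follows the same route as the paper: reduce everything to $\theta \circ \phi$ versus $\theta'$, invoke Lemma~\ref{lemma:realization-rot1} when $\Rot = \pm 1$, and in the $\Rot = 0$ case use Theorem~\ref{thrm:complete} together with the vertex sign $\sigma$ for condition (a), and the relabeled graph $\theta' \circ \tau$ for condition (b). The gap is in the converse of part (2), exactly where you predicted the main obstacle: your sign bookkeeping there is wrong, and the stabilizer lemma you defer is false, so the plan cannot be completed. When $\Rot_{\theta'} = 0$, Lemma~\ref{lemma:total-rot} gives $\sigma_{\theta'}(v_1) = \sigma_{\theta'}(v_2)$, and directly from the definition of $\sigma$ one has $\sigma_{\theta' \circ \psi}(v_1) = \varepsilon(\psi)\,\sigma_{\theta'}(\psi(v_1))$, where $\varepsilon(\psi)$ is the sign of the \emph{edge-permutation} part of $\psi$ alone. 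Hence with $\Rot = 0$ the vertex sign flips if and only if the edge part of $\psi$ is odd, independently of the vertex part: a pure vertex swap does not flip $\sigma$, and a mixed element $\phi_v \tau$ ($\tau$ a transposition) \emph{does} flip it. Your rule (flip iff vertex and edge parts have opposite parity) conflates reversing the orientations of the cycles $\gamma_i$, which only negates $\rotbf$, with reversing the ribbon coorientation, which is what $\sigma$ detects; these are different operations.

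With the correct rule, the lemma you propose --- that a coorientation-reversing element of the $\rho$-stabilizer of an admissible pair with $\Rot = 0$ forces some transposition $\tau$ with $\rho_3(\tau)$ fixing that pair --- is false. Take $\tbbf = (-3,-2,-2)$ and $\rotbf = (-2,1,1)$; this pair is admissible and $\Rot = 0$. No $\rho_3(\tau)$ fixes it: $\rho_3(\phi_2)$ and $\rho_3(\phi_3)$ move the $\tb$-coordinates, while $\rho_3(\phi_1)$ sends $\rot_1 = -2$ to $+2$. But $\rho(\phi_v \phi_1)$, which acts by $(\tb_1,\tb_2,\tb_3,\rot_1,\rot_2,\rot_3) \mapsto (\tb_1,\tb_3,\tb_2,\rot_1,\rot_3,\rot_2)$, does fix it, and $\phi_v\phi_1$ is coorientation-reversing. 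Consequently the two embeddings $\theta_{\pm}$ produced by Lemma~\ref{lemma:realization-rot0} for this pair satisfy: $\theta_+ \circ \phi_v \phi_1$ has the same $(\tbbf,\rotbf)$ as $\theta_-$ and the same vertex sign, hence is Legendrian isotopic to $\theta_-$ by Theorem~\ref{thrm:complete} and Lemma~\ref{lemma:tb-determines-contact}. So $\theta_+$ and $\theta_-$ are related by a relabeling even though condition (b) fails; the implication you want is simply not true with (b) as stated. You have in fact put your finger on the one step the paper's own proof elides: it asserts that the relabeling $\psi$ ``must be an odd permutation in $S_3$,'' silently discarding the mixed elements $\phi_v\tau$ of $S_2 \times S_3$, which the computation above shows cannot be discarded. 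Closing the argument requires either reading condition (a) with $\phi$ quantified over all automorphisms matching the invariants, or enlarging condition (b) to allow stabilizing elements of the form $\rho(\phi_v \tau)$; it cannot be done by the stabilizer lemma you outline.
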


\begin{proof}
First, suppose that $\Rot_{\theta}, \Rot_{\theta'} \neq 0$.  Then by assumption $(\tbbf_{\theta \circ \phi},\rotbf_{\theta \circ \phi}) = (\tbbf_{\theta'},\rotbf_{\theta'})$ and so by Lemma \ref{lemma:realization-rot1}, the graphs $g \circ \phi$ and $h$ are Legendrian isotopic.

Second, suppose that $\Rot_{\theta} = \Rot_{\theta'} = 0$.  If $\sigma_{\theta \circ \phi}(v_1) = \sigma_{\theta'}(v_1)$, then the Legendrian ribbons $R_{\theta \circ \phi}$ and $R_{\theta'}$ have the same coorientation, and by Theorem \ref{thrm:complete}, $g \circ \phi$ and $h$ are Legendrian isotopic.  

Instead, suppose that $\sigma_{\theta \circ \phi}(v_1) \neq \sigma_{\theta'}(v_1)$. If the required transposition $\tau$ exists, then $(\tbbf_{\theta \circ \phi},\rotbf_{\theta \circ \phi}) = (\tbbf_{\theta' \circ \tau},\rotbf_{\theta' \circ \tau})$ and since $\tau$ is an odd permutation, $\sigma_{\theta \circ \phi}(v_1) = \sigma_{\theta' \circ \tau}(v_1)$ and so $\theta \circ \phi$ and $\theta' \circ \tau$ are Legendrian isotopic.  Conversely, $\theta$ and $h$ are isotopic up to relabeling if and only if there exists some automorphism $\psi$ such that $\theta \circ \phi \circ \psi$ and $\theta'$ are Legendrian isotopic.  In particular, this means that $\sigma_{\theta \circ \phi \circ \psi}(v_1) = \sigma_{\theta'}(v_1)$.  Consequently, $\psi$ must be an odd permutation in $S_3$ and the odd permutations of $S_3$ are exactly the transpositions.
\end{proof}

%%%%%%%%%%%%%%%%%%%%%%%%%%%%%%%%%%%%%%%%%%%%%%%%%%%%%%%
\section{Graph moves and stabiliziations} % (fold)
\label{sec:moves}
%%%%%%%%%%%%%%%%%%%%%%%%%%%%%%%%%%%%%%%%%%%%%%%%%%%%%%%
In this section we introduce two moves between different Legendrian graphs, and show how the $G_l$ graphs are related by these moves.  

Fuchs and Tabachnikov \cite{Fuchs-Tabachnikov} showed that if $L_1,L_2$ are Legendrian knots that are topologically isotopic, then they are Legendrian isotopic after applying a sequence of stabilizations.  
This is not the case for Legendrian graphs, stabilizations along edges of Legendrian graphs are not a sufficient collection of moves to accomplish this.  
Recall, for Legendrain graphs stabilization is defined in a similar way and can happen on any edge.  
A simple example of Legendrian graphs that are not related by are the pair Legendrian $\Theta$-graphs $G_{-\frac{1}{2}}$ and $G_0$.  
These two graphs have a different cyclic ordering of the edges around one of the vertices.  
There is no way to changed the cyclic ordering at a vertex with only edge stabilizations.  
Here we will define two new moves: vertex stabilization and vertex twist.  

%\begin{conjecture}  
%Let $G$ and $H$ be two Legendrian realizations of a graph $g$ of the same topological type, then $G$ and $H$ are related by a sequence of graph Reidemeister moves, edge stabilization, vertex stabilization, and vertex twists.  
%\end{conjecture}
%
%\begin{conjecture}  
%Let $G$ and $H$ be two Legendrian realizations of a trivalent graph $g$ of the same topological type, then $G$ and $H$ are related by a sequence of graph Reidemeister moves, edge stabilization, and vertex stabilization.  
%\end{conjecture}
%

% -----------Figure stabilizations -----------------

\begin{figure}[htpb!]
\begin{center}
\begin{picture}(300, 120)
\put(0,0){\includegraphics[width=4.3in]{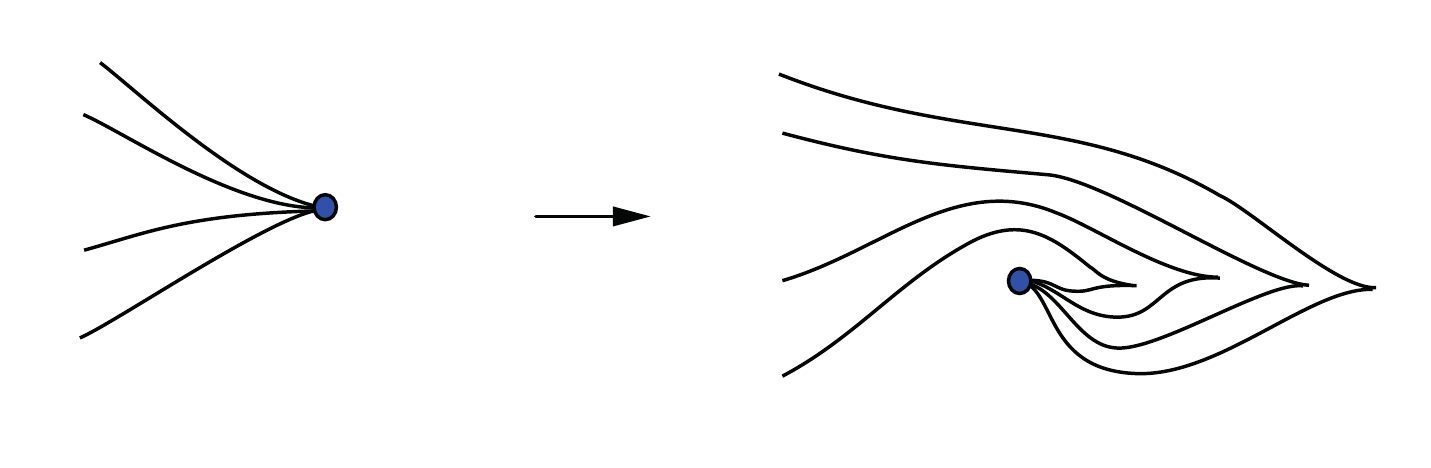}}
\put(115,60){\small \textsc{Vstab}}
\end{picture}
\caption{A vertex stabilization in the front projection on a valence 4 vertex.  }\label{fig:vertex-stabilizations}
\end{center}
\end{figure}
%%%%%%%%%%%%%%%%%%%%%%%%%

\begin{definition}
A {\it vertex stabilization} is defined for vertices of valence 3 or more, by front projection diagrams shown in  Figure~\ref{fig:vertex-stabilizations}; here the neighborhood of a vertex is replaced with a vertex and arcs where a half stabilization is introduced to each edge.  
\end{definition}

Figure~\ref{fig:vertex-stabilizations} shows a stabilization of a valance 4 vertex.  
Vertex stabilization reverses the cyclic order of edges around the vertex in the contact plane.  
Note that in a neighborhood of a vertex, all edges can be moved to the left using Reidemeister V moves.

\begin{remark}
The choice to have the edges kind downwards rather than upwards is inconsequential.  In Figure \ref{fig:VstabBraid} we show a sequence of Legendrian graph Reidemeister moves to go from the standard vertex stablization form to a braided representation.  
From the braided from one can move in a similar way to the projection with all edges kinked the other way.  
\end{remark}

% -----------Figure stabilizations -----------------

\begin{figure}[htpb!]
\begin{center}
\begin{picture}(389, 150)
\put(0,0){\includegraphics[width=5.5in]{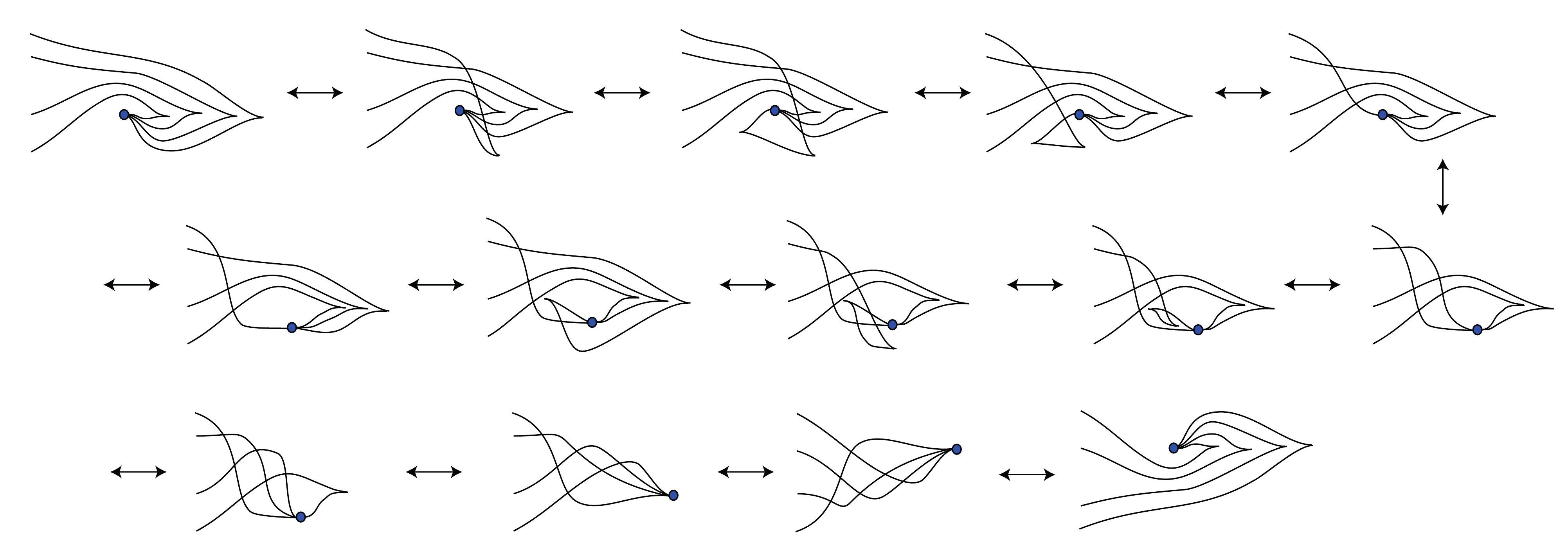}}
\put(70,122){3x II}
\put(154,122){V}
\put(233,122){III$_v$}
\put(313,122){I}

\put(367,92){A}

\put(20,73){p. iso}
\put(107,73){V}
\put(184,73){III$_v$}
\put(258,73){II}
\put(331,73){I}

\put(32,25){A}
\put(107,25){V}
\put(176,25){4x III}
\put(258,25){$\overline{\textrm{B}}$}
\end{picture}
\caption{ Moving between the standard projection of a vertex stabilization, the braided from, and the mirrored projection.    
The double sided arrows indicate Legendrian graph Reidemeister moves, sets of Legendrian graph Reidemeister moves, or planar isotopy.  
The set of moves in the second row (together called A), shows how to move a edge $e_i$ ($i\neq 1,n$) into the braid formation.  
The set of moves to get from the standard projection of a vertex stabilization, the braided from (shown third from the end) we will call B.  
The symbol $\overline{\textrm{B}}$ is the similar set of Legendrian graph Reidemeister moves going from our mirrored braid to the mirrored projection.  }
\label{fig:VstabBraid}
\end{center}
\end{figure}
%%%%%%%%%%%%%%%%%%%%%%%%%

\begin{observation}
Vertex stabilization is not intrinsic to the vertex itself.  The front diagram picks out the top and bottom edges as a distinguished pair of edges.  The Legendrian isotopy class of the resulting graph depends on this distinguished pair.  By Reidemeister moves, we can cyclically rotate the edges as in Figure \ref{fig:Vstab2} and then apply a vertex stabilization.  However, this is not Legendrian isotopic to the original vertex stabilization.

To see this, let the edges incident to $v$ be labeled $e_i$, starting with the top edge and going to the bottom (clockwise in the cyclic ordering).  
Let $\alpha_i$ be the arc $e_i$ followed by $e_{i+1}$, and $\alpha_n$ be the arc $e_n$ followed by $e_1$.  
This gives an orientation on the arcs, where each edge appears once with each orientation.  
After the vertex stablilzation the arcs $\alpha_i$ with $i=1,\dots, n-1$ will be negatively stabilized and the arc $\alpha_n$ will be positively stabilized.  See Figure \ref{fig:VstabUpDown}.  In particular, any cycle containing the distinguished pair of edges will be stabilized with the opposite sign.

Thus, for a valence $n$ vertex, there are $n$ different possible vertex stabilizations.  See Figure \ref{fig:Vstabs}
\end{observation}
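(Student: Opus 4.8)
The plan is to prove the three assertions of the Observation by a direct local analysis of the front projection near the vertex, followed by an invariance argument. First I would fix the standard front model of the vertex stabilization from Figure~\ref{fig:vertex-stabilizations}, with the incident edges $e_1,\dots,e_n$ ordered from top to bottom and each kinked downward, so that each edge acquires a single half-stabilization, i.e.\ one cusp. The key local computation is to determine, for each arc $\alpha_i$ through the vertex, the sign of the full stabilization produced when the two half-cusps on $e_i$ and $e_{i+1}$ are joined across the (now reversed) vertex.

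Next I would carry out the cusp bookkeeping using the induced orientation on $\alpha_i$. Traversing $\alpha_i = e_i \cup e_{i+1}$ for $1 \le i \le n-1$ runs monotonically downward through the vertex, so the two half-cusps assemble into a single zigzag whose cusps are oriented upward; by the formula $\rot_g(\gamma) = \tfrac{1}{2}\left( \#\,\text{down cusps} - \#\,\text{up cusps} \right)$ this registers as a \emph{negative} stabilization. For $\alpha_n = e_n \cup e_1$ the induced orientation instead runs from the bottom edge back up to the top edge, exchanging the roles of the two cusps, so the identical local picture now reads as a \emph{positive} stabilization. This is exactly the asserted sign pattern, and it shows that the arc $\alpha_n$ spanning the distinguished pair is stabilized with the sign opposite to all the others, so that any cycle containing both the top and bottom edges is stabilized oppositely.

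Then I would deduce the non-intrinsicness and the count. Relabeling which cyclically adjacent pair of edges plays the role of $(\text{top},\text{bottom})$ cyclically permutes which arc receives the single positive stabilization. Since the sign with which a cycle is stabilized is detected by the change in its rotation number, and $\rot$ of each cycle is a Legendrian isotopy invariant, two vertex stabilizations that distinguish different pairs produce Legendrian graphs whose cycle rotation numbers differ, and hence are not Legendrian isotopic. At a valence-$n$ vertex there are exactly $n$ cyclically adjacent pairs of edges, yielding the $n$ pairwise distinct vertex stabilizations of Figure~\ref{fig:Vstabs}.

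The main obstacle I expect is the cusp-orientation computation of the second step: reconciling the sign conventions of the half-stabilization local model with the global orientation induced on each arc, and in particular verifying cleanly that the wrap-around arc $\alpha_n$ is the unique one whose sign flips. The invariance argument in the third step is then routine given Theorem~\ref{thrm:complete} (or directly the invariance of $\rot$), but some care is needed to phrase the distinctness so that it holds for the move applied to an arbitrary ambient Legendrian graph, by exhibiting, for any two choices of distinguished pair, a concrete cycle through $v$ whose rotation number is forced to differ.
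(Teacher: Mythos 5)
Your proposal is correct and follows essentially the same route as the paper: the paper's own justification is precisely this local sign bookkeeping on the arcs $\alpha_i$ (recorded pictorially in Figure \ref{fig:VstabUpDown}), concluding that $\alpha_1,\dots,\alpha_{n-1}$ are negatively and $\alpha_n$ positively stabilized, so that the distinguished pair is detected by any cycle through it and there are $n$ distinct stabilizations at a valence-$n$ vertex. The only difference is one of explicitness: you spell out the cusp-orientation computation and the appeal to the invariance of $\rot$ on detecting cycles (including the caveat that such a cycle through $v$ must exist), whereas the paper leaves these steps to its figures and states them implicitly.
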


% -----------Figure stabilizations -----------------

\begin{figure}[htpb!]
\begin{center}
\begin{picture}(400, 130)
\put(0,0){\includegraphics[width=5.5in]{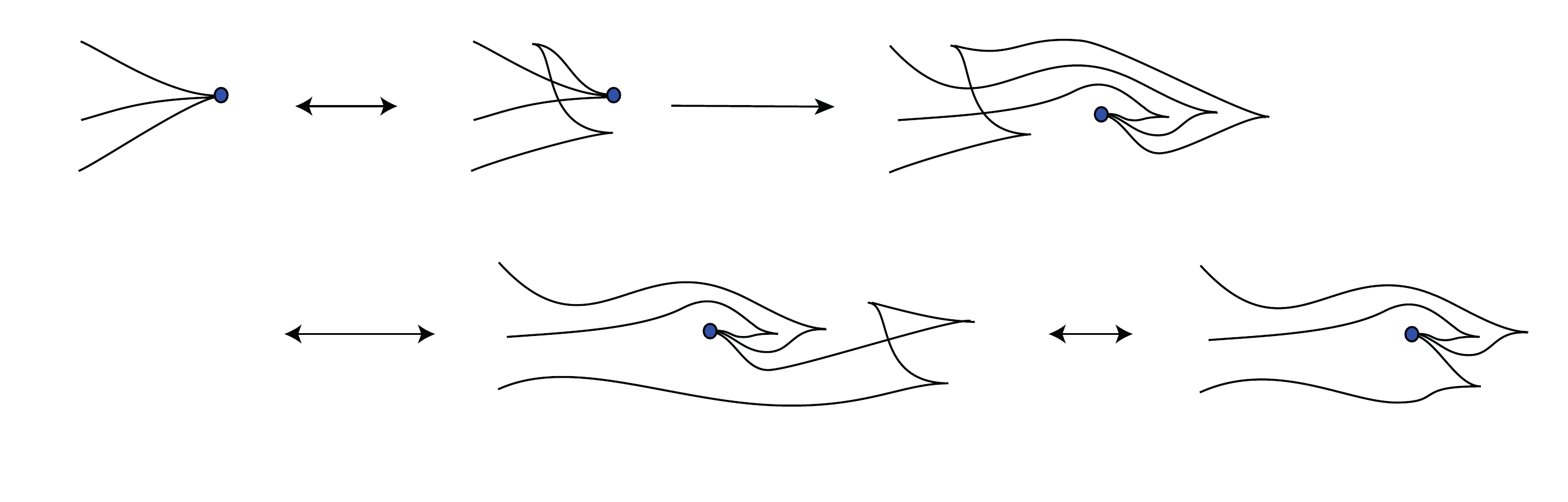}}

\put(78, 103){V, V}
\put(174, 103){\small\textsc{Vstab}}
\put(69, 46){III$_v$, II, II}
\put(275, 46){I}
\end{picture}
\caption{Obtaining the second type of vertex stabilization $S_2(v)$ using only the vertex stabilization move and Reidemeister moves.}
\label{fig:Vstab2}
\end{center}
\end{figure}
%%%%%%%%%%%%%%%%%%%%%%%%%

% -----------Figure stabilizations -----------------

\begin{figure}[htpb!]
\begin{center}
\begin{picture}(150, 170)
\put(0,0){\includegraphics[width=2.5in]{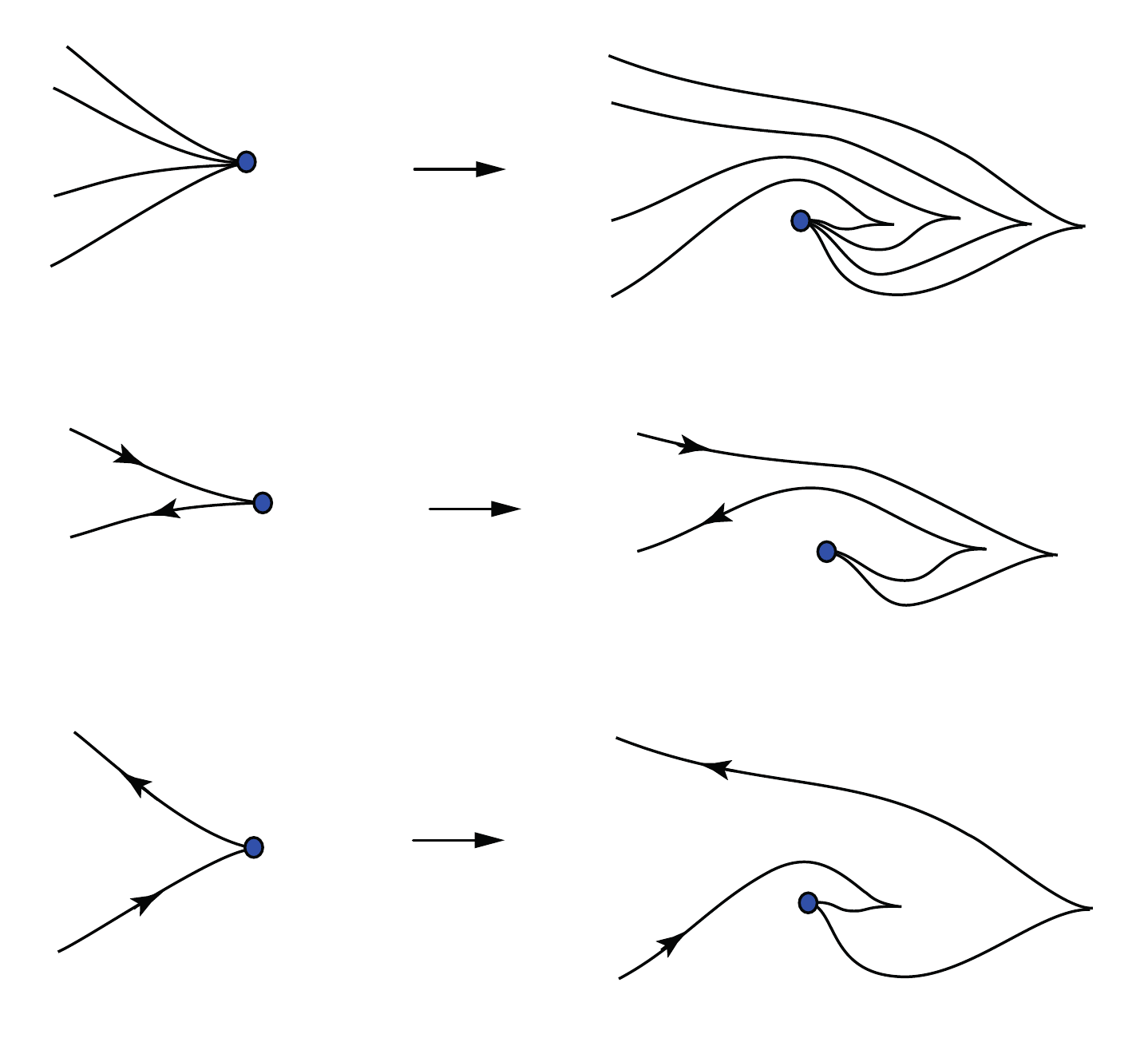}}
\put(59,142){\small\textsc{Vstab}}

\put(2, 162){\small $e_1$}
\put(-2, 152){\small $e_2$}
\put(-2, 132){\small $e_3$}
\put(0, 120){\small $e_4$}

\put(30, 72){\small $\alpha_i$}
\put(30, 18){\small $\alpha_n$}
\end{picture}
\caption{ The top row shows vertex stabilization on a 4 valent vertex, the lower two show how the arcs defined by pairs of edges are changed under vertex stabilization.   
With the edges labeled as indicated, let $\alpha_i$ be the arc $e_i$ followed by $e_{i+1}$ (for $i=1,2,3$), and $\alpha_n$ be the arc $e_4$ followed by $e_1$.}
\label{fig:VstabUpDown}
\end{center}
\end{figure}
%%%%%%%%%%%%%%%%%%%%%%%%%

% -----------Figure stabilizations -----------------

\begin{figure}[htpb!]
\begin{center}
\begin{picture}(400, 60)
\put(0,0){\includegraphics[width=5.5in]{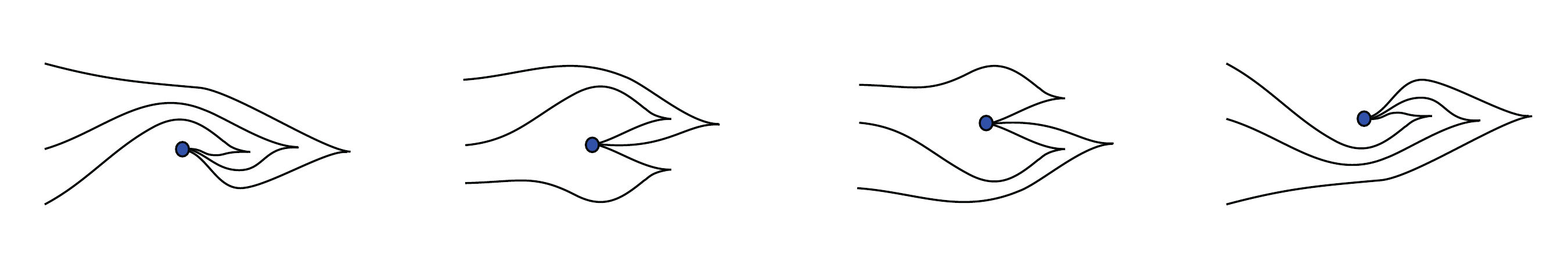}}

\put(35, 1){\small $S_1(v)$}
\put(135, 1){\small $S_2(v)$}
\put(235, 1){\small $S_3(v)$}
\put(335, 1){\small $S_1(v)$}
\end{picture}
\caption{The three different stabilizations for a trivalent vertex.  The equivalence of the first and last projections is shown in Figure \ref{fig:VstabBraid}.}
\label{fig:Vstabs}
\end{center}
\end{figure}
%%%%%%%%%%%%%%%%%%%%%%%%%

The different vertex stabilizations are obtained by having a different pair of edges in the top and bottom positions.  
Any pair of neighboring edges can be moved to these positions using Reidemeister V moves.  
Figure \ref{fig:Vstabs} shows the three different vertex stabilizations for a trivalent vertex.  
Figure \ref{fig:Vstab2} shows how to obtain the second stabilization $S_2(v)$ through Reidemeister moves and the vertex stabilization.  
Thus the single move shown in Figure~\ref{fig:vertex-stabilizations} for valence $n$ vertices ($n\geq3$), together with the Legendrian graph Reidemeister moves gives all possible vertex stabilizations.  

\begin{definition}
A {\it vertex twist} is defined for vertices of valence 3 or more, by front projection diagrams shown in Figure~\ref{fig:Vtwist}; here the neighborhood of a vertex is replaced with a vertex and arcs where two neighboring edges have switched cyclic ordering around the vertex and one edges crosses over the other as shown.  
\end{definition}

\begin{remark}
If we restrict  to trivalent graphs the vertex twist is redundant.  
Both positive and negative vertex twists can be obtained by a vertex stabilization, an edge destabilization, and Reidemeister moves.  
See Figure \ref{fig:3Vtwist}.
%A negative vertex twist can be obtained by a vertex stabilization followed by Reidemeister moves.  
%See Figure \ref{fig:3Vtwist}.
\end{remark}

% -----------Figure stabilizations -----------------

\begin{figure}[htpb!]
\begin{center}
\begin{picture}(150, 100)
\put(0,0){\includegraphics[width=2.5in]{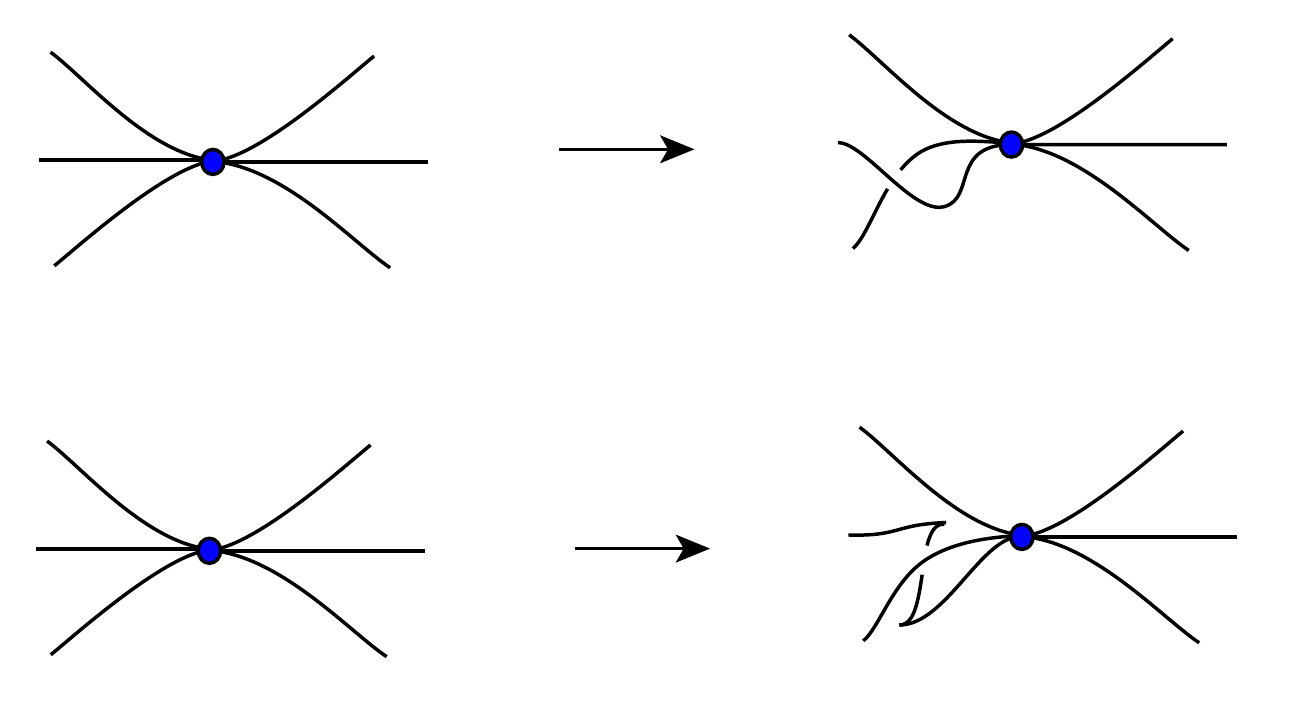}}
\put(75,83){\small\textsc{twist}$_+$}
\put(75,25){\small\textsc{twist}$_-$}
\end{picture}
\caption{Positive and negative vertex twist.  }
\label{fig:Vtwist}
\end{center}
\end{figure}
%%%%%%%%%%%%%%%%%%%%%%%%%
% -----------Figure stabilizations -----------------

\begin{figure}[htpb!]
\begin{center}
\begin{picture}(300, 220)
\put(0,-10){\includegraphics[width=4.5in]{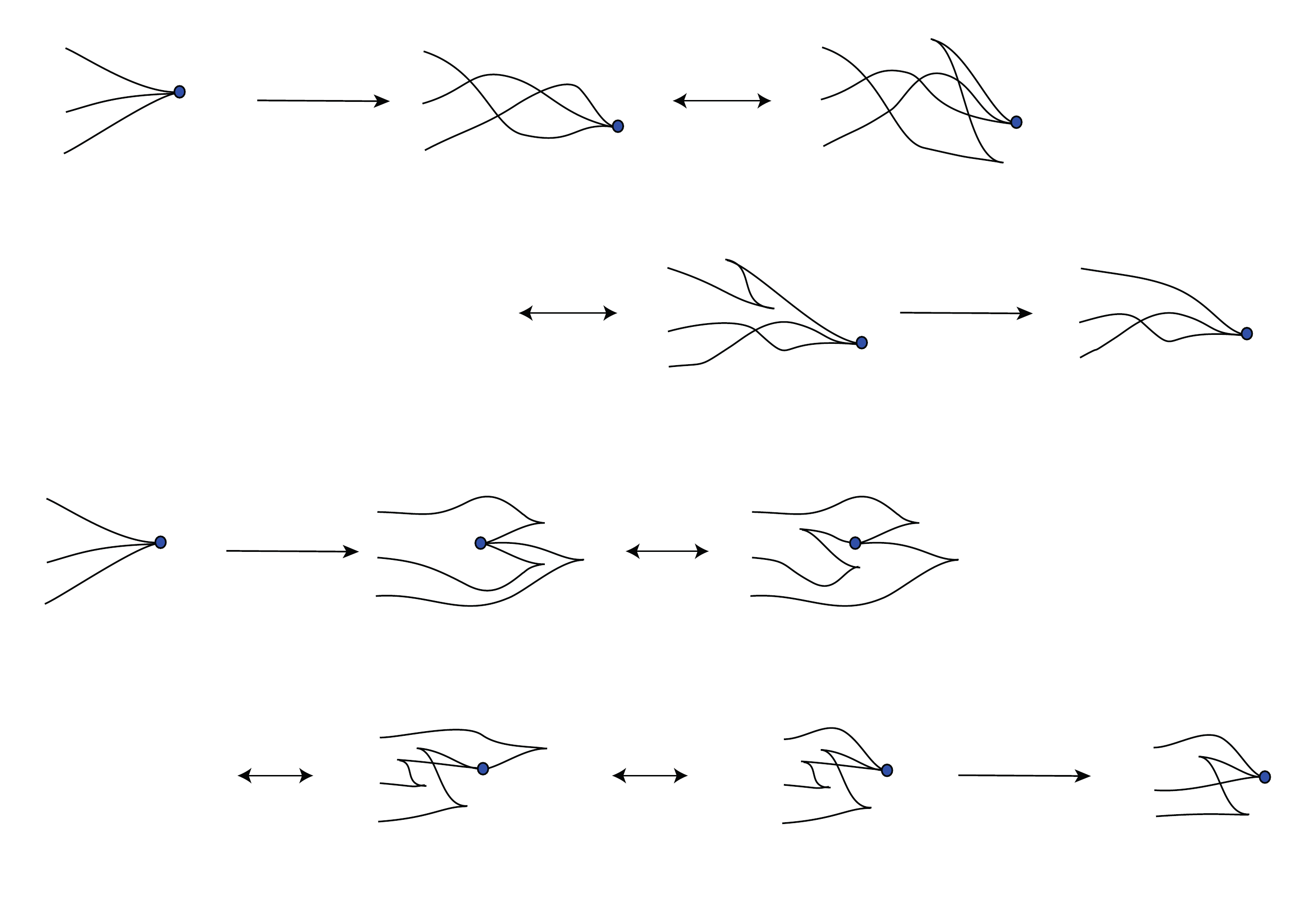}}
\put(58, 191){{\small\textsc{Vstab}}, B}
\put(167, 191){V, V}

\put(120, 138){III, II, II}
\put(223, 138){\small\textsc{destab}}

\put(58,82){$S_2(v)$}
\put(161, 80){V}

\put(65,24){V}
\put(156,24){V}
\put(238,24){\small\textsc{destab}}
\end{picture}
\caption{Obtaining the vertex twist moves for trivalent vertices, through vertex stabilization, an edge destabilization, and Reidemeister moves.}
\label{fig:3Vtwist}
\end{center}
\end{figure}
%%%%%%%%%%%%%%%%%%%%%%%%%

Now that we have defined all of the moves on Legendrian graphs, we will look at how the $G_i$'s are related by such moves.  

\begin{lemma}
The graph $G_{l + \frac{1}{2}}$ can be obtained from $G_{l}$ by a vertex stabilization and then an edge destabilization.
\end{lemma}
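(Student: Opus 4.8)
The plan is to track the effect of the two moves on the Thurston--Bennequin vector $\tbbf$ (equivalently the edge-twisting vector $\twbf$) and then identify the result using the classification of nondestabilizeable graphs. First I would fix the standard front projection of $G_l$ with $e_1$ the top edge, $e_2$ the middle edge, and $e_3$ the bottom edge, so that by Lemma \ref{lemma:Gl-basics} we have $\tbbf_{G_l} = (-1, -2-2l, -1)$ and $\twbf_{G_l} = (l, -1-l, -1-l)$. I would then apply a single vertex stabilization at the right-hand vertex $v_2$, at the end of the twist region.

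The key computation is the effect of this vertex stabilization on $\tbbf$. Since the $\Theta$-graph is trivalent, each of the three cycles $\gamma_1,\gamma_2,\gamma_3$ traverses $v_2$ as one of the local arcs $\alpha_1,\alpha_2,\alpha_3$, and by the Observation each such arc is stabilized (two negatively and one positively). A stabilization lowers the Thurston--Bennequin number of the affected cycle by $1$, so every entry of $\tbbf$ drops by $1$, namely $\tbbf \mapsto (-2,\,-3-2l,\,-2)$; equivalently each $\tw(e_i)$ decreases by $\frac{1}{2}$. I would then perform an edge destabilization along the top edge $e_1$. Because $e_1$ lies in exactly the two cycles $\gamma_1$ and $\gamma_3$, this raises $\tb(\gamma_1)$ and $\tb(\gamma_3)$ by $1$, giving
\[
\tbbf \longmapsto (-1,\,-3-2l,\,-1) = \tbbf_{G_{l+\frac{1}{2}}}.
\]
As a consistency check, the vertex stabilization reverses the cyclic order at $v_2$ and hence flips $\sigma_\theta(v_2)$, so by Lemma \ref{lemma:total-rot} the total rotation number changes parity, exactly as required in passing from $G_l$ to $G_{l+\frac{1}{2}}$ (cf. Lemma \ref{lemma:Gl-basics}(2)).

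To finish, I would observe that the resulting graph $\theta'$ is nondestabilizeable: its cycles $\gamma_1$ and $\gamma_3$ both have $\tb=-1$ and together contain all three edges, so no edge admits a destabilization (the criterion used in Lemma \ref{lemma:Gl-basics}(4)). By the classification of nondestabilizeable graphs (the Corollary following Proposition \ref{prop:GL-stab}), $\theta'$ must be some $G_{l'}$, and since the multiset of Thurston--Bennequin values $\{-1,-1,-3-2l\}$ of $\theta'$ is label-independent, matching the distinguished entry $-2-2l' = -3-2l$ forces $l' = l+\frac{1}{2}$. As $l\geq -\frac{1}{2}$ ensures $-3-2l \leq -2 \neq -1$, this entry is unambiguous, and $l' = l+\frac{1}{2}\geq 0$ is admissible; the rotation numbers then automatically agree because $\theta'$ is literally $G_{l+\frac{1}{2}}$.

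The step I expect to be the main obstacle is verifying at the level of diagrams that after the vertex stabilization an edge destabilization along $e_1$ is genuinely available: that the half-stabilization placed on $e_1$ together with the reversal of the cyclic order at $v_2$ can be isotoped, via the Reidemeister moves of Figure \ref{figure:Rmoves}, into a removable zigzag, while the half-stabilizations on $e_2,e_3$ are absorbed into the twist region, converting its $l+\frac{1}{2}$ twists into the $l+1$ twists of $G_{l+\frac{1}{2}}$. The invariant bookkeeping above is routine, but this pictorial verification is what legitimizes the move sequence; once it is in hand, the classification argument identifies the result without further picture-chasing.
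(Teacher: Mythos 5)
Your proposal has a genuine gap, and it is exactly the one you flag yourself: nothing in your argument establishes that, after the vertex stabilization at $v_2$, an edge destabilization along $e_1$ is actually available. Your computation of the effect of vertex stabilization on $\tbbf$ (each entry drops by $1$, each edge twist by $\frac{1}{2}$) is correct, and your classification step correctly proves the conditional statement: \emph{if} a destabilization along $e_1$ exists, \emph{then} the result is nondestabilizeable and must be $G_{l+\frac{1}{2}}$. But the existence of that destabilization is the entire content of the lemma, and it is precisely what the paper's proof supplies: an explicit chain of Legendrian Reidemeister moves (Figure \ref{fig:G_ltoG_l+}, using the braid form of the vertex stabilization from Figure \ref{fig:VstabBraid}) ending at a front that is visibly $G_{l+\frac{1}{2}}$ with a stabilized distinguished edge, which is then destabilized.

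Moreover, this gap cannot be closed by the $\tbbf$-level bookkeeping you use, so ``routine invariants plus classification'' is not a complete strategy even in principle. Writing $\theta''$ for the vertex-stabilized graph, one has $\twbf_{\theta''}=(l-\tfrac{1}{2},\,-\tfrac{3}{2}-l,\,-\tfrac{3}{2}-l)$, and (for $l\geq 0$) this is equally consistent with $\theta''$ being a \emph{double} stabilization of $G_{l-\frac{1}{2}}$, one stabilization on each non-distinguished edge, since $\twbf_{G_{l-\frac{1}{2}}}=(l-\tfrac{1}{2},\,-\tfrac{1}{2}-l,\,-\tfrac{1}{2}-l)$; under that structure, an available single destabilization produces a graph with $\tbbf$-multiset $\{-1,-2,-2-2l\}$, which is not $G_{l+\frac{1}{2}}$. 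Proposition \ref{prop:GL-stab} only guarantees that $\theta''$ destabilizes to \emph{some} $G_{l'}$ after \emph{some} number $N$ of destabilizations, and the count of $\sum\tbbf$ permits $l'=l+\tfrac{3}{2}-N$ for every $N\geq 1$. To single out a one-step destabilization to $G_{l+\frac{1}{2}}$ without pictures, you would need the finer invariants: compute $\rotbf_{\theta''}$ (the two negative and one positive arc stabilizations hit the three cycles with orientation-dependent signs) and the vertex sign $\sigma(v_1)$, match them against a suitably signed single stabilization of $G_{l+\frac{1}{2}}$ on its distinguished edge, and invoke Theorem \ref{thrm:complete} together with the coorientation arguments of Lemmas \ref{lemma:realization-rot0} and \ref{lemma:realization-rot1} to conclude the two are Legendrian isotopic. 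That would be a legitimate alternative to the paper's diagrammatic proof, but your proposal stops short of carrying it out, and $\tbbf$ alone cannot do the job.
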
\label{lemma:G_ltoG_l+}

\begin{proof}
Consider an arbitrary $G_l$, as shown in Figure \ref{fig:G_ltoG_l+}.  
A vertex stabilization is done on the right-most vertex, and is moved to braid form.  
Then Reidemeister moves are done as indicated to move to a $G_{l + \frac{1}{2}}$ with the distinguished edge stabilized.  
This completes the lemma.  
\end{proof}

%%%%%%%%%%%%%%%%%%%%%%%%%%%%%%%
\begin{figure}[htpb!]
\centering
\labellist
	\small\hair 3pt
	\pinlabel \textsc{Vstab} at 205 357
	\pinlabel B at 530 350
	
	\pinlabel III at 37 220
	\pinlabel V at 332 220
	\pinlabel III$_V$ at 613 220

	\pinlabel I at 71 85
	\pinlabel II at 370 85
	\pinlabel V at 657 85
	
	\small\hair 2pt
	\pinlabel $l+\frac{1}{2}$ at 93 327
	\pinlabel $l+\frac{1}{2}$ at 325 327
	\pinlabel $l+\frac{1}{2}$ at 660 327

	\pinlabel $l+\frac{1}{2}$ at 169 195
	\pinlabel $l+1$ at 463 200
	\pinlabel $l+1$ at 737 195

	\pinlabel $l+1$ at 207 59
	\pinlabel $l+1$ at 492 54
	\pinlabel $l+1$ at 791 54

\endlabellist
\includegraphics[width=1\textwidth]{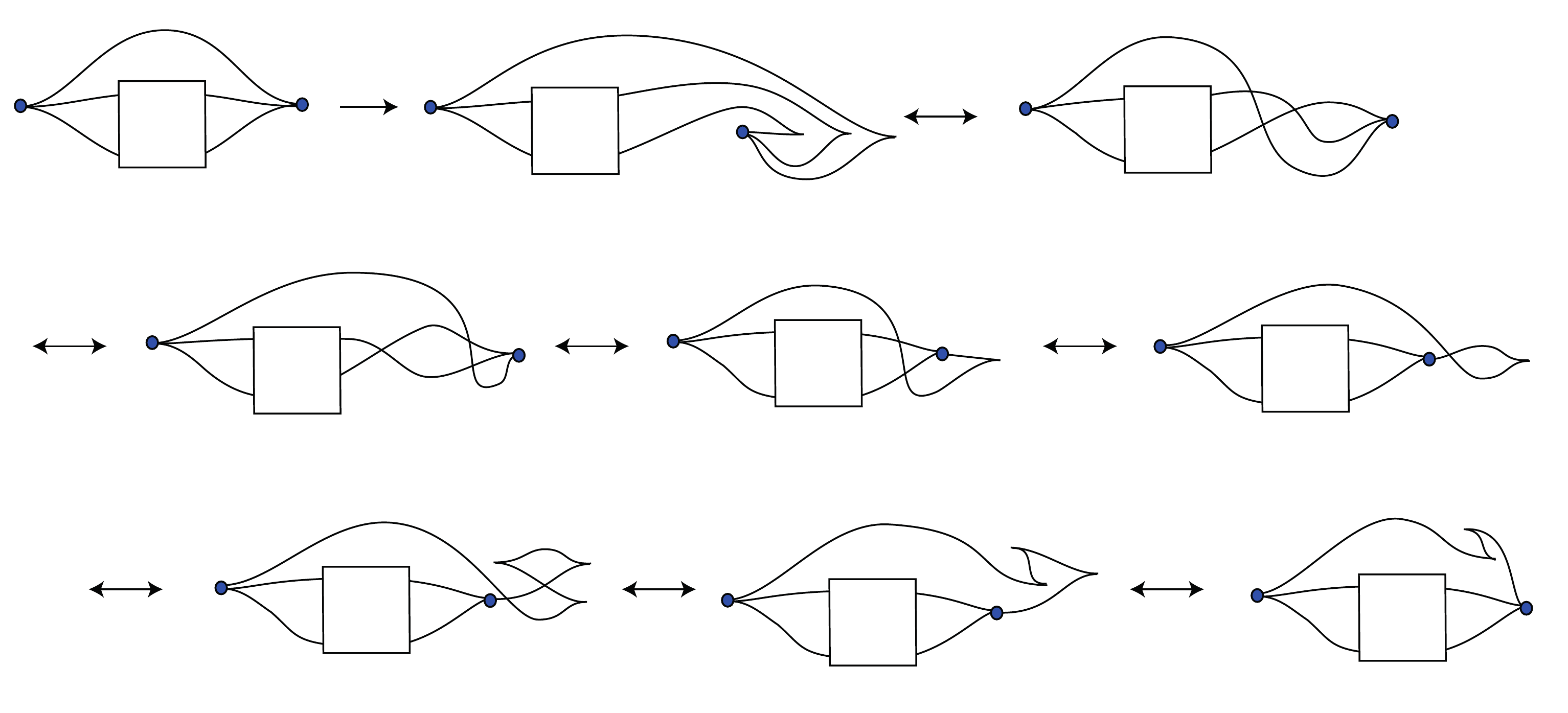}
\caption{The result of a single vertex stabilization on $G_l$.  The first arrow shows a vertex stabilization.  
The double sided arrows indicate Legendrian graph Reidemeister moves or sets of Legendrian graph Reidemeister moves.  
The symbol {\bf B} is for the set of Legendrian graph Reidemister moves need to move from the standard front projection of a vertex stabilization to the braided form.  See Figure \ref{fig:VstabBraid}.   }
\label{fig:G_ltoG_l+}
\end{figure}
%%%%%%%%%%%%%%%%%%%%%%%%%%%

\begin{lemma}
The graph $G_{l'}$ can be obtained from $G_{l}$ by a sequence of an edge stabilization (or destabilization) if and only if $l - l' \in \ZZ$.
\end{lemma}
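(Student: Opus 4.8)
The plan is to prove both implications separately, using $\Rot$ as the obstruction for the forward direction and a common-stabilization argument together with the classification for the reverse.

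For the ``only if'' direction, I would first observe that the total rotation number $\Rot$ is invariant under edge (de)stabilization. The key point is that each edge $e_i$ of $\Theta$ lies in exactly two cycles, and it is traversed with opposite orientations in those two cycles (it runs $v_1 \to v_2$ in $\gamma_i$ and $v_2 \to v_1$ in $\gamma_{i-1}$). A single stabilization on $e_i$ therefore changes $\rot$ of one of these cycles by $+1$ and the other by $-1$, so its contribution to the sum $\Rot = \rot(\gamma_1) + \rot(\gamma_2) + \rot(\gamma_3)$ cancels. Hence any sequence of edge stabilizations and destabilizations preserves $\Rot$. If $G_l$ and $G_{l'}$ are related by such a sequence, then $\Rot_{G_l} = \Rot_{G_{l'}}$; since Lemma \ref{lemma:Gl-basics} gives $\Rot_{G_l} \equiv 2l+1$ and $\Rot_{G_{l'}} \equiv 2l'+1 \pmod 2$, this forces $2(l - l') \equiv 0 \pmod 2$, i.e. $l - l' \in \ZZ$.

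For the ``if'' direction, by transitivity it suffices to connect $G_l$ and $G_{l+1}$ for every admissible $l$, since iterating relates any $G_l, G_{l'}$ with $l - l' \in \ZZ$. I would produce a single Legendrian $\Theta$-graph that is simultaneously an edge stabilization of $G_l$ and of $G_{l+1}$. Labeling so that $e_1$ is the top edge, Lemma \ref{lemma:Gl-basics} gives $\tbbf_{G_l} = (-1,-2-2l,-1)$ and $\rotbf_{G_l} = (0,R,0)$ with $R = \Rot_{G_l}$, and likewise for $G_{l+1}$ with the \emph{same} $R$ (the two graphs differ by a full twist on $e_2 \cup e_3$, which does not change the cyclic orders at the vertices). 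Let $\theta$ be obtained from $G_l$ by one positive stabilization on each of $e_2$ and $e_3$, and let $\theta'$ be obtained from $G_{l+1}$ by one negative stabilization on $e_1$. A direct computation of the cusp and writhe contributions shows $\tbbf_\theta = \tbbf_{\theta'} = (-2,-4-2l,-2)$ and $\rotbf_\theta = \rotbf_{\theta'} = (-1,R,1)$, so in particular $\Rot_\theta = \Rot_{\theta'} = R$.

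It then remains to promote this equality of classical invariants to a Legendrian isotopy $\theta \cong \theta'$, which is where the classification enters and where the only real subtlety lies. When $R = \pm 1$ (the case $l \in \ZZ$), Lemma \ref{lemma:realization-rot1} applies immediately. When $R = 0$ (the case $l \in \frac{1}{2} + \ZZ$), Lemma \ref{lemma:realization-rot0} warns that two embeddings with equal $(\tbbf,\rotbf)$ may be distinguished by the coorientation of their Legendrian ribbons, so I must check that this coorientation agrees; this is the main obstacle. I would resolve it using Lemma \ref{lemma:total-rot}: the coorientation is recorded by the common vertex sign $\sigma_\theta(v_1) = \sigma_\theta(v_2)$, and because all stabilizations are performed in the interiors of edges, $\theta$ and $\theta'$ inherit the vertex signs of $G_l$ and $G_{l+1}$, which coincide. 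Hence $\theta$ and $\theta'$ have isotopic ribbons with the same coorientation and the same rotation invariant, so Theorem \ref{thrm:complete} yields $\theta \cong \theta'$. The required sequence is then to stabilize $G_l$ twice to reach $\theta = \theta'$ and then destabilize once to reach $G_{l+1}$.
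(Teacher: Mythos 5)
Your proof is correct, but it takes a genuinely different route from the paper's, which is entirely diagrammatic. For the ``only if'' direction the paper does not use $\Rot$ at all: it observes that edge stabilizations cannot change the cyclic ordering of edges at a vertex, while $G_l$ and $G_{l'}$ with $l - l' \notin \ZZ$ are related (via Lemma \ref{lemma:G_ltoG_l+}) by an odd number of vertex stabilizations and hence have different cyclic orderings at a vertex. Your $\Rot$-parity obstruction is essentially the same invariant in algebraic form---by Lemma \ref{lemma:total-rot}, $\Rot$ is computed from the vertex signs, which record exactly those cyclic orderings---but your cancellation argument (each edge of $\Theta$ lies in two cycles traversing it with opposite orientations, so one stabilization contributes $+1$ to one rotation number and $-1$ to the other) is self-contained and avoids any appeal to the vertex-stabilization lemma. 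For the ``if'' direction, both proofs realize $G_{l+1}$ from $G_l$ by two edge stabilizations followed by one edge destabilization, but the paper establishes the middle identification by exhibiting an explicit sequence of Legendrian Reidemeister moves (Figure \ref{fig:G_ltoG_l'}), whereas you compute the classical invariants of the double stabilization $\theta$ of $G_l$ and the single stabilization $\theta'$ of $G_{l+1}$ and then invoke the classification machinery: Lemma \ref{lemma:realization-rot1} when $\Rot = \pm 1$, and Theorem \ref{thrm:complete} together with the vertex-sign argument when $\Rot = 0$. That last case is the delicate point, and you handle it the same way the paper itself does in its Proposition on relabelings: equal $(\tbbf,\rotbf)$ plus equal $\sigma(v_1)$ forces equal cooriented ribbons, and stabilizations performed in the interiors of edges leave the vertex signs of $G_l$ and $G_{l+1}$ (which agree, since a full twist preserves the cyclic orderings) untouched. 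Two small points of care that you implicitly got right: the signs of your stabilizations must be chosen compatibly with the cycle orientations so that both $\rotbf$ vectors come out to $(-1,R,1)$, and the value $R$ is the same for $G_l$ and $G_{l+1}$ by Lemma \ref{lemma:total-rot}. The trade-off between the two arguments: the paper's is elementary, constructive, and independent of the classification results, producing the explicit isotopy; yours needs no pictures but leans on Theorem \ref{thrm:complete}, the deepest input available in the paper.
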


\begin{proof}
Suppose that $l-l'$ were not an integer, then Lemma \ref{lemma:G_ltoG_l+} they are related by an odd number of vertex stabilizations and some edge destabilizations.  
Since they are related by an odd number of vertex destabilizations, the two graphs have a different cyclic ordering of the edges around one of the vertices.  
Thus they cannot be related by edge stabilization / destabilization.  

Now suppose $l - l' \in \ZZ$.  
Without loss of generality suppose $l<l'$.  
Consider an arbitrary $G_l$, as shown in Figure \ref{fig:G_ltoG_l'}.  
Two edge stabilizations are done on the lower two edges.  
Then Reidemeister moves are done as indicated to move to a $G_{l + 1}$ with the distinguished edge stabilized.  
This edge can be destabilized to obtain $G_{l+1}$.  
This process is repeated to obtain $G_{l'}$ from $G_l$.  
\end{proof}

%%%%%%%%%%%%%%%%%%%%%%%%%%%%%%%
\begin{figure}[htpb!]
\centering
\labellist
	\small\hair 3pt
	\pinlabel {2x \textsc{stab}} at 208 424
	\pinlabel {II, II} at 463 420
	\pinlabel V at 708 419

	\pinlabel V at 37 318
	\pinlabel {II, II} at 295 316
	\pinlabel {II, II, V, V} at 580 316
	
	\pinlabel III at 40 204
	\pinlabel V at 335 204
	\pinlabel III$_v$ at 615 204

	\pinlabel I at 71 82
	\pinlabel II at 373 82
	\pinlabel V at 657 82
	
	\small\hair 2pt
	\pinlabel $l+\frac{1}{2}$ at 93 403
	\pinlabel $l+\frac{1}{2}$ at 325 401
	\pinlabel $l+\frac{1}{2}$ at 583 393
	\pinlabel $l+\frac{1}{2}$ at 825 393

	\pinlabel $l+\frac{1}{2}$ at 157 293
	\pinlabel $l+\frac{1}{2}$ at 422 293
	\pinlabel $l+\frac{1}{2}$ at 714 291

	\pinlabel $l+\frac{1}{2}$ at 157 178
	\pinlabel $l+\frac{3}{2}$ at 463 182
	\pinlabel $l+\frac{3}{2}$ at 737 180

	\pinlabel $l+\frac{3}{2}$ at 208 56
	\pinlabel $l+\frac{3}{2}$ at 493 49
	\pinlabel $l+\frac{3}{2}$ at 792 51

\endlabellist
\includegraphics[width=1\textwidth]{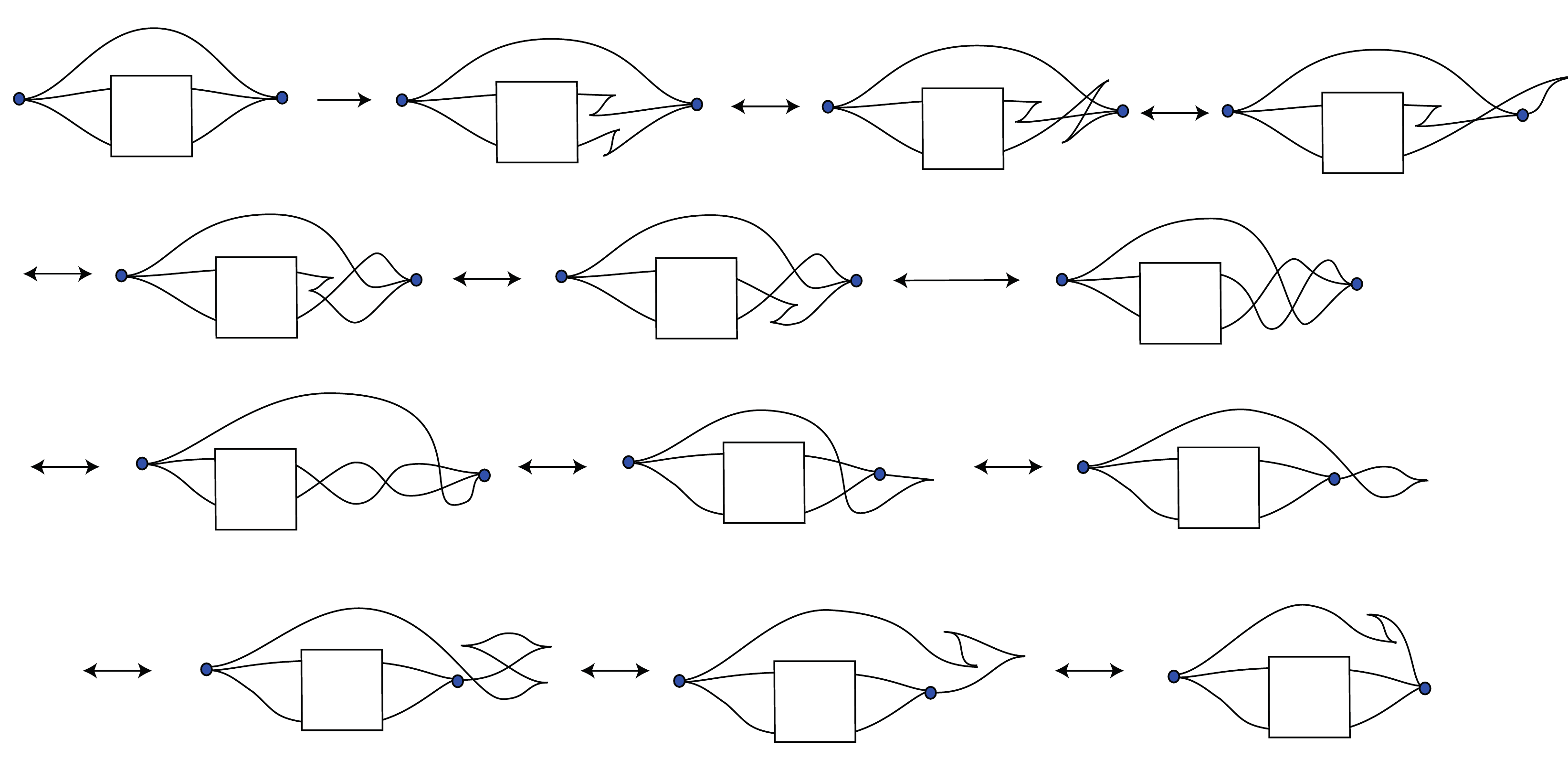}
\caption{ Moving between $G_{l}$ and $G_{l+1}$ with a stabilized edge.  
The first arrow shows two edge stabilizations.  
The double sided arrows indicate Legendrian graph Reidemeister moves or sets of Legendrian graph Reidemeister moves as indicated.   }
\label{fig:G_ltoG_l'}
\end{figure}
%%%%%%%%%%%%%%%%%%%%%%%%%%%%%%%%

%%%%%%%%%%%%%%%%%%%%%%%%%%%%%%%%%%%%%%%%%%%%%%%%%%%%%%%
\section{Nondestabilizable graphs} % (fold)
\label{sec:minors}
%%%%%%%%%%%%%%%%%%%%%%%%%%%%%%%%%%%%%%%%%%%%%%%%%%%%%%%

In this subsection, we prove that a planar graph has infinitely many nondestabilizeable realizations if it contains a subdivision of $\Theta$ or $S^1\vee S^1$ as a subgraph.  The proof implicitly uses convex surface theory, although we will not use the full theory here.  The only fact we require is the following instance of the Legendrian Realization Principle \cite{Honda1}.  

\begin{proposition}[Legendrian Realization Principle]
\label{prop:LERP}
Let $S$ be the unit sphere in $(\RR^3,\xi_{std})$, let $\Gamma = S \cap \{z = 0\}$ be the equator of $S$, and let $G$ be a graph embedded on $S$.  Then there exists a $C^{\infty}$ small perturbation $\phi$ of $S$, fixing $\Gamma$, such that $\phi(G)$ is Legendrian.  In addition, if $\gamma$ is a cycle of $G$ then $\tb(\gamma) = -\frac{1}{2} \# (\gamma \cap \Gamma)$.
\end{proposition}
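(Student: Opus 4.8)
The plan is to identify the statement as the instance of Honda's Legendrian Realization Principle \cite{Honda1} that applies once $S$ is exhibited as a convex surface with dividing set $\Gamma$. The first step is to check convexity and compute the dividing set. It is cleanest to work in the rotationally symmetric model $\alpha = dz + x\,dy - y\,dx$, whose kernel is contactomorphic to $\xi_{std}$; the anisotropic scaling $\psi_\mu:(x,y,z) \mapsto (\mu x, \mu y, \mu^2 z)$ satisfies $\psi_\mu^* \alpha = \mu^2 \alpha$, so its infinitesimal generator $v = x\,\partial_x + y\,\partial_y + 2z\,\partial_z$ is a contact vector field ($\mathcal{L}_v \alpha = 2\alpha$). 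Since $v$ pairs with the outward radial direction as $x^2 + y^2 + 2z^2 > 0$ on $S$, it is transverse to $S$, so $S$ is convex. By definition the dividing set is the locus where $v$ is tangent to $\xi$, namely $\{\alpha(v) = 0\}$; a direct computation gives $\alpha(v) = 2z$, so the dividing set is exactly $S \cap \{z = 0\} = \Gamma$, as asserted.

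The second step is to put $G$ in admissible position and invoke the realization principle. After a $C^\infty$-small isotopy of $G$ within $S$, I may assume that $G$ meets $\Gamma$ transversally and is \emph{nonisolating}, i.e.\ every component of $S \setminus (G \cup \Gamma)$ has a boundary arc on $\Gamma$. (This hypothesis is genuinely needed: a circle contained in the open northern hemisphere bounds a complementary disk disjoint from $\Gamma$ and cannot be made Legendrian, so the proposition is implicitly restricted to this position.) With $G$ nonisolating, the Legendrian Realization Principle produces a $C^\infty$-small perturbation $\phi$ of $S$, fixing $\Gamma$, such that $\phi(G)$ lies in the characteristic foliation of the convex surface $\phi(S)$; a curve tangent to the characteristic foliation is Legendrian, which gives the first conclusion.

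The final step is the $\tb$ computation. Each cycle $\gamma$ of $G$ is an embedded circle on $S \cong S^2$ and therefore bounds a disk inside $S$; hence the framing that $\gamma$ inherits from the surface $\phi(S)$ is precisely the Seifert (i.e.\ $0$-) framing of the unknot $\gamma$. For any Legendrian curve contained in a convex surface, the standard convex-surface computation identifies the twisting of the contact framing relative to the surface framing with $-\tfrac{1}{2}\#(\gamma \cap \Gamma)$. Combining these two observations gives $\tb(\gamma) = -\tfrac{1}{2}\#(\gamma \cap \Gamma)$.

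I expect the main obstacle to lie not in the convexity or dividing-set computation, which are routine in the symmetric model, but in the two bookkeeping points of the last two steps: first, confirming that $G$ can always be isotoped into nonisolating position relative to $\Gamma$ (and making precise that the stated formula presumes such a position), and second, verifying carefully that the surface framing of a cycle coincides with its Seifert framing, so that the convex-surface twisting computation literally computes the Thurston--Bennequin number rather than a framing-shifted version of it.
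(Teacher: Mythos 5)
Your overall route coincides with the paper's: the paper does not prove this proposition at all, but states it as an instance of Honda's Legendrian Realization Principle with a citation to \cite{Honda1}, and your writeup is essentially the justification that citation points to. Your computations are correct: in the rotationally symmetric model the vector field $v = x\,\partial_x + y\,\partial_y + 2z\,\partial_z$ is contact, transverse to $S$, and $\alpha(v) = 2z$, so $S$ is convex with dividing set the equator (the change of model moves the round sphere, but the statement only depends on the contact-geometric configuration of a convex sphere with equatorial dividing set, so this is cosmetic). The $\tb$ step is also right: a cycle $\gamma$ bounds a disk on the sphere, so the surface framing is the Seifert framing, and the twisting of the contact framing relative to the surface framing of a Legendrian curve on a convex surface is $-\frac{1}{2}\#(\gamma \cap \Gamma)$.

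The one genuine problem is the step asserting that a $C^\infty$-small isotopy of $G$ within $S$ puts $G$ in nonisolating position. That is false, and your own parenthetical counterexample shows it: a cycle contained in the open northern hemisphere remains disjoint from $\Gamma$ under any small isotopy, so no small perturbation can arrange the nonisolating condition. (The $\tb$ formula detects the same obstruction: such a cycle would have $\tb = 0$, contradicting the Bennequin inequality for unknots.) So you cannot simultaneously invoke LERP and keep the statement as literally written; the nonisolating condition, or at least a hypothesis on how $G$ meets $\Gamma$, must be carried as an honest hypothesis rather than asserted and then retracted in parentheses. This imprecision is really in the paper's statement rather than only in your proof, but a correct proof should say so explicitly. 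It is worth noting that in the paper's only application, Proposition \ref{prop:realization-propN}, the hypothesis holds automatically: there $\Gamma$ is the boundary of a tubular neighborhood of a spanning tree $T^*$ of the dual graph, so $\Gamma$ enters every face of $G$, and consequently every component of $S \setminus (G \cup \Gamma)$ has boundary meeting $\Gamma$.
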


Let $g: G \rightarrow (M,\xi)$ be a Legendrian embedding.  The graph $g$ has {\it Property N} if each edge is either (1) a cut edge, or (2) contained in a nondestabilizeable cycle.

\begin{proposition}
\label{prop:nondestab-criterion}
Let $g: G \rightarrow (M,\xi)$ be a Legendrian graph.  If $g$ has Property N, then $g$ is nondestabilzeable.
\end{proposition}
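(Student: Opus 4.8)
The plan is to argue by contradiction, reducing the statement to the two cases permitted by Property N. Suppose $g$ is destabilizeable, so that there is an edge $e$ and a Legendrian graph $g'$ with $g$ Legendrian isotopic to the edge stabilization of $g'$ along $e$. By Property N the edge $e$ is either contained in a nondestabilizeable cycle or is a cut edge, and I would derive a contradiction in each case.

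For the cycle case, let $\gamma$ be a nondestabilizeable cycle containing $e$. Restricting a Legendrian isotopy of graphs to a fixed cycle yields a Legendrian isotopy of that cycle as a knot, and an edge stabilization along $e$ inserts a zig-zag on $e \subset \gamma$. Hence the cycle $\gamma$ of $g$ is Legendrian isotopic to a stabilization $S_{\pm}(\gamma')$, where $\gamma'$ is the corresponding cycle of $g'$. This exhibits $\gamma$ as a stabilization of a Legendrian knot, contradicting the assumption that $\gamma$ is nondestabilizeable. This disposes of every edge lying in a cycle, and note that only a single nondestabilizeable cycle through $e$ is needed.

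The remaining, and genuinely delicate, case is that of a cut edge; this is where I expect the main obstacle. A cut edge lies in no cycle, so there is no Thurston--Bennequin number to violate, and instead I would show directly that no stabilization along a cut edge can be essential. Since $e$ is a bridge, deleting it splits the graph into two pieces, and one of these pieces may be rotated about the incident vertex. Transporting the inserted zig-zag along $e$ to that vertex and then applying Reidemeister V moves (and a final Reidemeister I move) removes it by a Legendrian isotopy, in the spirit of the Eliashberg--Fraser theorem that all Legendrian trees are Legendrian isotopic. Thus a stabilization along a cut edge is isotopically trivial and furnishes no genuine destabilization of $g$. The crux is to verify that this transport-and-absorption does not alter the Legendrian type elsewhere; equivalently, that the contact framing and transverse pushoff are insensitive to adding half-twists along a bridge band of the Legendrian ribbon $R_g$, which I would check by tracking the effect of the move directly on $R_g$.

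Combining the two cases, no edge of $g$ admits a destabilization, and therefore $g$ is nondestabilizeable.
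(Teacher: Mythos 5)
Your first case is not just correct --- it is, essentially verbatim, the paper's \emph{entire} proof. The argument given there is the single observation that if $g$ admits a destabilization along an edge $e$, then every cycle containing $e$ admits a destabilization, which contradicts Property N whenever $e$ is not a cut edge. Your restriction-of-the-isotopy argument spells this out correctly.

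The divergence is the cut-edge case, and here you have put your finger on something the paper silently skips: read literally, the paper's one-line proof yields no contradiction when $e$ is a bridge (the set of cycles through $e$ is empty), so the paper is implicitly adopting the convention that a stabilization along a cut edge, being Legendrian-isotopically trivial, is not a genuine destabilization. Under the definition you chose, that triviality is exactly what must be proved, and you correctly flag it as the crux --- but your proposed mechanism does not prove it. ``Transport the zig-zag to the vertex, rotate the hanging piece about the vertex, absorb with V and I moves'' is a complete argument only when the hanging piece is a single edge ending in a valence-one vertex, i.e.\ the tree situation of Eliashberg--Fraser. When an entire component hangs off the bridge, rotating it around the vertex amounts to rotating the contact-plane frame there, and Section \ref{sec:moves} of this very paper shows that such rotations (vertex stabilization, vertex twist) are in general \emph{not} Legendrian isotopies: they reverse the cyclic order and change $\tbbf$ of cycles through the vertex. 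So the bridge hypothesis must enter the argument in an essential way, and your sketch never uses it beyond naming it. Your fallback --- checking that the move does not change the ribbon $R_g$ --- also does not close the gap on its own: to upgrade ``same ribbon and same rotation numbers'' to ``Legendrian isotopic'' you need Theorem \ref{thrm:complete}, which is available only for topologically trivial embeddings of planar trivalent graphs, not for the arbitrary $g\colon G \rightarrow (M,\xi)$ in the proposition's statement. The fact you want is believable and is consistent with the classification of topologically trivial handcuff graphs in \cite{OP-1} (where only the invariants of the two cycles matter), but in your write-up it is asserted rather than proved. In summary: your cycle case coincides with the paper's proof and is fine; your cut-edge case addresses a real lacuna that the paper resolves by definitional fiat, but as written it remains a sketch with the key step missing.
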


\begin{proof}
If $g$ admits a destabilization along $e$, then every cycle containing $e$ admits a destabilization.
\end{proof}

A edge $e$ (respectively vertex $v$) of a connected graph $G$ is a {\it cut edge} (respectively {\it cut vertex}) if $G \smallsetminus e$ ($ G \smallsetminus v$) is disconnected.

\begin{proposition}
\label{prop:realization-propN}
Let $G$ be an abstract planar graph.  Then there exists a topologically trivial Legendrian embedding $g: G \rightarrow (S^3,\xi_{std})$ that has Property N.
\end{proposition}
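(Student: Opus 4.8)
The plan is to realize $G$ on the standard convex unit sphere $S \subset (\RR^3,\xi_{std})$ and read off the Thurston--Bennequin numbers of its cycles from the Legendrian Realization Principle (Proposition~\ref{prop:LERP}). Since $G$ is planar it embeds on $S^2$, and any embedding whose image lies on $S$ is automatically topologically trivial; the only real work is to position $G$ relative to the equator $\Gamma$ so that the relevant cycles meet $\Gamma$ in exactly two points, forcing $\tb = -1$ on them. As a Legendrian unknot has maximal $\tb$ equal to $-1$, any such cycle is nondestabilizeable, exactly as in the proof of Lemma~\ref{lemma:Gl-basics}(4).

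First I would fix a planar embedding of $G$ on $S^2$ and choose a spanning forest $F$ of $G$. If $G$ is disconnected I would join the trees of $F$ by finitely many disjoint arcs drawn in the complement $S^2 \smallsetminus G$, producing a single embedded tree $\widehat{T} \supset F$ whose regular neighborhood $\nu(\widehat{T})$ is a disk $D$. Setting $\Gamma := \partial D$ gives one simple closed curve separating $S^2$ into two disks. After an ambient diffeomorphism of $S^2$ carrying $\Gamma$ to the equator $S \cap \{z = 0\}$, together with a small transverse perturbation moving the vertices off $\Gamma$, Proposition~\ref{prop:LERP} produces a Legendrian embedding $g$ with $\tb_g(\gamma) = -\tfrac12 \#(\gamma \cap \Gamma)$ for every cycle $\gamma$.

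The key bookkeeping step is the crossing count. Every forest edge lies in $\nu(\widehat{T}) = D$, hence is disjoint from $\Gamma$, while each non-forest edge $e$ has both endpoints in $D$ but interior disjoint from $\widehat{T}$; choosing $\nu(\widehat{T})$ thin and $e$ generic, $e$ leaves and re-enters $D$ exactly once, so $\#(e \cap \Gamma) = 2$. Consequently the fundamental cycle $\gamma_e = e \cup P_e$, where $P_e \subset F$ is the forest path between the endpoints of $e$, satisfies $\#(\gamma_e \cap \Gamma) = 2 + 0 = 2$, and therefore $\tb_g(\gamma_e) = -1$, so $\gamma_e$ is nondestabilizeable.

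It then remains to verify Property N edge by edge. A cut edge satisfies condition (1) directly. A non-forest edge $e$ lies on its own nondestabilizeable cycle $\gamma_e$, giving condition (2). Finally, if $e \in F$ is not a cut edge, then removing $e$ from $F$ splits the tree containing it into two vertex sets $V_1,V_2$, and since $e$ is not a cut edge of $G$ there is another edge $e' \notin F$ joining $V_1$ to $V_2$; its forest path, hence its fundamental cycle $\gamma_{e'}$, must traverse $e$, so $e$ again lies on a nondestabilizeable cycle. Thus $g$ has Property N. I expect the main obstacle to be packaging all the required cycles into a single dividing curve: the tree--cotree construction is precisely what makes one curve $\Gamma$ simultaneously give every fundamental cycle exactly two intersections, and the disconnected case forces the extra step of fusing the spanning forest into one tree so that $\Gamma$ remains a single equatorial circle.
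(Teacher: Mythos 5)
Your proof is correct, and while the convex-surface skeleton is the same as the paper's (realize $G$ on the unit sphere with dividing curve $\Gamma$ equal to the boundary of a neighborhood of a tree, apply Proposition~\ref{prop:LERP}, and note that $\tb=-1$ cycles are nondestabilizeable), the combinatorics are organized in a genuinely different way. The paper builds $\Gamma$ from a spanning tree $T^*$ of the \emph{dual} graph $G^*$, so that an edge of $G$ meets $\Gamma$ twice exactly when its dual edge lies in $T^*$, and then proves that every non-cut edge lies in a $\tb=-1$ cycle by an induction over the dual tree: faces are ordered by distance to a root face, and the desired cycles arise as boundaries of disk-shaped unions of faces. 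You instead build $\Gamma$ from a spanning tree (or fused spanning forest) $\widehat{T}$ of $G$ itself, so that the edges meeting $\Gamma$ twice are exactly the non-tree edges, and the $\tb=-1$ cycles are precisely the fundamental cycles of $\widehat{T}$; Property~N then follows from two standard graph-theoretic facts --- every non-tree edge lies on its own fundamental cycle, and every non-cut tree edge $e$ lies on the fundamental cycle of a reconnecting edge $e'$, which exists because $e$ is not a cut edge and must join the two sides of the vertex partition that only $e$ crosses within the tree. The two constructions are actually planar-dual to one another: the complement of a thin neighborhood of your tree deformation retracts onto the dual tree $\{e^* : e \notin \widehat{T}\}$, so the two choices of $\Gamma$ agree up to isotopy and select the same family of $\tb=-1$ cycles; what differs is the verification, where your fundamental-cycle argument replaces the paper's induction and is noticeably shorter. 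Two side remarks: you handle disconnected $G$ explicitly by fusing the forest with arcs in $S^2 \smallsetminus G$, a case the paper's write-up does not address (though its dual-graph construction needs no patch, since the dual of a disconnected plane graph is still connected); and the paper's dual formulation is the one reused in the proof of Theorem~\ref{thrm:theta-infinite}, where specific dual edges are forced \emph{into} $T^*$ to make certain cycles have $\tb \leq -2$ --- the same flexibility exists in your picture by forcing specific edges \emph{out} of $\widehat{T}$, so nothing is lost by your route.
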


\begin{proof}
Fix an embedding of $G$ in $S^2$ and let $\Gstar$ be the dual graph of $G$ in $S^2$.  Choose a spanning tree $\Tstar$ for $\Gstar$ and let $\Gamma$ be the boundary of a tubular neighborhood of $\Tstar$ in $S^2$.  Choose a map $ S^2 \rightarrow S$ that sends $\Gamma$ to the equator of $S$, the unit sphere in $(\RR^3, \xi_{std})$.  By the Legendrian Realization Principle (Proposition \ref{prop:LERP}), a perturbation gives a topologically trivial Legendrian embedding $g$ of $G$.

In order to show that $g$ has Property N, we need to show that every non-cut edge is contained in a nondestabilizeable cycle.  By Proposition \ref{prop:LERP} and the classification of Legendrian unknots, this requires that each non-cut edge is contained in a cycle $\gamma$ satisfying $ \# \gamma \cap \Gamma = -2 \tb_g(\gamma) = 2$.  By the construction of $\Gamma$, this implies that the among all the edges comprising the cycle $\gamma$, exactly 1 corresponding dual edge is contained in the spanning tree $\Tstar$.

Fix a root vertex $F^*_{root}$.  For each face $F^*$ define $l(F^*)$ to be the length of the unique path in $\Tstar$ from $F^*_{root}$ to $F$.  Let $m$ denote the maximal length of any such path.  For each face $F^*$, define $d(F^*) = m - l(F^*)$.  We will prove that every edge in the boundary of every face is either a cut edge or contained in a cycle with $\tb = -1$ by induction on $d$.  

Let $F$ be a face such that $d(F^*) = 0$.  Then $F^*$ is a leaf of $T^*$.  In addition, since $F^*$ is leaf of $T^*$, it cannot be a cut vertex of $\Gstar$.  Let $\overline{F}$ be the closure of the corresponding face $F$.  The domain $\overline{F}$ must be a topological disk, since if it has multiple boundary components then $F^*$ is a cut vertex of $\Gstar$.  Then for each edge $e \in \del F$, either $e$ is a cut edge or $e$ lies in $\del \overline{F}$.  Let $\gamma = \del \overline{F}$. The cycle $\gamma$ contains exactly 1 edge whose dual lies in $\Tstar$ since $F^*$ is a leaf of $\Tstar$.  Therefore all edges in $\del F$ satisfy the condition.

Now, let $F$ be a face such that $d(F^*) = i > 0$ and suppose that all edges in the boundary of a face $F$ with $d(F^*) \leq i-1$ are either cut edges or contained in a cycle with $\tb = -1$.  If $F^* = F^*_{root}$, then every edge in the boundary of $F$ is either a cut edge or incident to another face $F'$.  Since $d(F^*_{root}) > d((F')^*)$, by induction all non-cut edges in $\del F$ are contained in a cycle of $\tb = -1$.

Suppose instead that $F^* \neq F^*_{root}$.  Removing $F^*$ from $T^*$ splits $T^*$ into two components, one containing the root $F^*_{root}$ and one containing faces with $d((F')^*) < i$.  Let $D$ be the union of the latter set of faces with $F$.  Let $\overline{D}$ be the closure of the union of the faces of $D$.  This domain must be a topological disk since otherwise the complement of the union of the dual faces in $\Gstar$ would have multiple components.  Let $\gamma = \del \overline{D}$ and let $e$ be an edge in $\del F$.  Then either (1) $e \in \gamma$, (2) $e$ is a cut edge, or (3) $e$ connects $F$ to some other face $F_j$ in $D$.  In the first two cases, it's clear that $e$ lies in a cycle with $\tb = -1$ or is a cut edge.  Finally, if (3), then by induction since $d(F') < i$ then $e$ lies in some other cycle with $\tb = -1$.
\end{proof}

\begin{customthm}{1.2}
Let $G$ be an abstract planar graph that contains a subdivision of $\Theta$ or $S^1 \vee S^1$ as a subgraph.  Then there exists infinitely many, pairwise-distinct, topologically trivial Legendrian embeddings $g: G \rightarrow (S^3,\xi_{std})$.
\end{customthm}

\begin{proof}
First suppose that $G$ contains a subdivision of $\Theta$ as a subgraph.  Fix an embedding of $G$ in $S^2$.  We can choose two vertices $v_1,v_2$ and three vertex-independent paths $p_1,p_2,p_3$ from $v_1$ to $v_2$.  Since $G$ is planar, then after possibly replacing $v_1,v_2,p_1,p_2,p_3$, we can assume that $v_1,v_2,p_2,p_3$ all lie in the boundary of some fixed face $F$.   Let $e_i$ denote the edge of $p_i$ incident to $v_1$.  After possibly modifying the embedding of $G$ in $S^2$, we can assume that $e_2,e_3$ are adjacent in the cyclic ordering of edges incident to $v_1$.

Since $p_2,p_3$ are vertex-independent, the dual face $F^*$ is not a leaf of $\Gstar$.  In addition, we can choose a spanning tree $\Tstar$ of $\Gstar$ containing $e^*_2$ and $e^*_3$.  Now, using the argument of Proposition \ref{prop:realization-propN}, we can use the spanning tree $T^*$ to construct a topologically trivial Legendrian embedding $g$ of $G$ with Property N.  Moreover, since $e^*_2,e^*_3 \in T^*$, every cycle $\gamma$ containing both $e_2$ and $e_3$ must have $\tb_g(\gamma) \leq -2$.

Let $g_k$ denote the graph obtained by performing $k$ positive vertex twists of $e_2$ and $e_3$ at $v_1$.  This is allowed since $e_2,e_3$ are adjacent at $v_1$.  If $\gamma$ is a cycle that does not contain both $e_2$ and $e_3$, then $g_k(\gamma)$ is Legendrian isotopic to $g(\gamma)$.  Consequently, if $\gamma$ is a cycle with $\tb_g(\gamma) = -1$ then $\tb_{g_k}(\gamma) = -1$ as well.   This implies that each $g_k$ has Property N and is nondestabilizeable.  However, if $\gamma$ is a cycle containing both $e_2$ and $e_3$, then $\tb_g(\gamma) - \tb_{g_k}(\gamma) = k$.  Thus, the Legendrian graphs $\{g_k\}$ are pairwise distinct.

Second, suppose that $G$ contains a subdivision of $S^1 \vee S^1$ as a subgraph and does {\it not} contain a subdivision of $\Theta$ as a subgraph.  Thus, there is a vertex $v$ and two vertex-independent paths $p_1,p_2$ from $v$ to itself.  Since $G$ does not contain a subdivided $\Theta$, every edge incident to $v$ either lies in a unique path from $v$ to $v$ or is a cut edge.  After possibly replacing the vertex and paths $v,p_1,p_2$ and the embedding of $G$ in $S^2$, we can assume that there exists edges $e_1,e_2$ that are consecutive in the cyclic ordering at $v$ and such that $e_i$ lies in $p_i$.

By a similar argument as above, we can find a nondestabilizeable Legendrian embedding $g$ of $G$. Let $g_k$ be the Legendrian graph obtained by performing $2k$ positive vertex twists of $e_1,e_2$.  Again, each $g_k$ is nondestabilizeable.  To distinguish these embeddings, we show that they have distinct contact framings.  Let $\widetilde{g}_k$ denote the restriction of $g_k$ to the subgraph $S^1 \vee S^1$.  The Legendrian ribbon $R_{\widetilde{g}_k}$ has a transverse pushoff $T_k$ with 3 components $T^1_k,T^2_k,T^3_k$.  See Figure \ref{fig:wedgeTpushoff}.  The linking number of $T^2_k$ with $T^3_k$ is $k$.  This implies that the embeddings $\{\widetilde{g}_k\}$ and therefore the embeddings $\{g_k\}$ are pariwise distinct.
\end{proof}

% -----------Figure stabilizations -----------------

\begin{figure}[htpb!]
\begin{center}
\begin{picture}(400, 200)
\put(0,0){\includegraphics[width=5.5in]{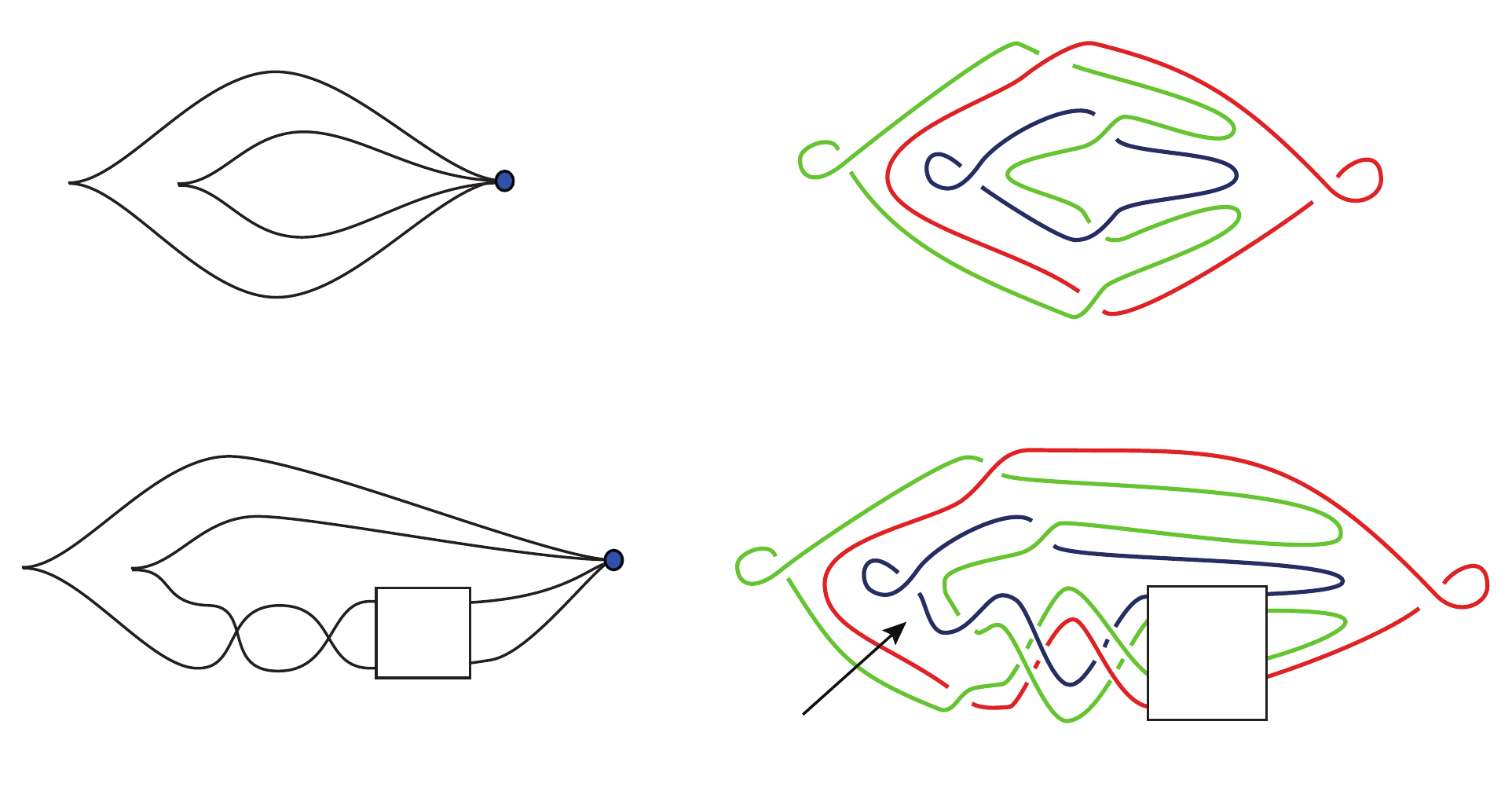}}

\put(105,42){$2n$}

\put(311,37){$2n$}
\put(198,75){$T^1_{n+1}$}
\put(398,50){$T^2_{n+1}$}
\put(195,10){$T^3_{n+1}$}

\end{picture}
\caption{On the left is a Legendrian embedding of $S^1 \vee S^1$ with both cycles maximal unknots (top) and the result of applying $2n+2$ vertex twists to the bottom two strands.  On the right are the transverse pushoffs of these graphs.  For all $n$, the transverse pushoff has 3 components and the linking number of $T^2_{n+1}$ and $T^3_{n+1}$ is $n+1$.}\label{fig:wedgeTpushoff}
\end{center}
\end{figure}
%%%%%%%%%%%%%%%%%%%%%%%%%

%%%%%%%%%%%%%%%%%%%%%%%%%%%%%%%%%%%%%%%%%%%%%%%%%%%%%%%
\bibliographystyle{alpha}
\nocite{*}
\bibliography{References}
%%%%%%%%%%%%%%%%%%%%%%%%%%%%%%%%%%%%%%%%%%%%%%%%%%%%%%%

\end{document}